\documentclass[letterpaper, 11pt]{amsart}
\usepackage{amstext,amsmath,amssymb,amsfonts,orcidlink}%
\usepackage{amsthm}%
\usepackage{xcolor}%
\usepackage{manyfoot}%
\usepackage[notextcomp]{stix}
\usepackage[utf8]{inputenc}
\usepackage[italian, english]{babel}
\usepackage[T1]{fontenc} 
\usepackage{braket,mathtools}
\usepackage{hyperref,csquotes}
\usepackage[english,capitalize]{cleveref}
\usepackage[a4paper,twoside,top=0.8in, bottom=0.7in, left=0.7in, right=0.7in]{geometry}
\usepackage{tikz}
\usetikzlibrary{patterns}

\definecolor{newblue}{rgb}{0.2, 0.3, 0.85}
\hypersetup{colorlinks=true, linkcolor=newblue, citecolor=newblue, urlcolor  = newblue}

\usepackage[maxbibnames=99,backend=biber, sorting=nyt, doi=false, url=false, isbn=false, style=alphabetic]{biblatex}
\addbibresource{Bibliography2.bib}

\numberwithin{equation}{section}
\newcommand{\N}{\mathbb{N}}

\newcommand{\R}{\mathbb{R}}

\newcommand{\ff}{\mathscr{F}}
\newcommand{\g}{\mathscr{G}}
\newcommand{\hh}{\mathcal{H}}
\newcommand{\rr}{\mathscr{R}}
\renewcommand{\epsilon}{\varepsilon}
\renewcommand{\theta}{\vartheta}
\renewcommand{\rho}{\varrho}
\renewcommand{\phi}{\varphi}
\newcommand{\de}{\,{\rm d}}
\renewcommand{\d}{{\rm d}}

\newcommand{\st}{\ensuremath{\ :\ }} % Such that in formulas
\newcommand{\mres}{\mathbin{\vrule height 1.6ex depth 0pt width
0.13ex\vrule height 0.13ex depth 0pt width 1.3ex}}
\makeatletter
\def\paragraph{\@startsection{paragraph}{4}%
  \z@\z@{-\fontdimen2\font}%
  {\normalfont\bfseries}}
\makeatother

\theoremstyle{plain}
\newtheorem{theorem}{Theorem}[section]
\newtheorem{proposition}[theorem]{Proposition}
\newtheorem{lemma}[theorem]{Lemma}
\newtheorem{corollary}[theorem]{Corollary}

\theoremstyle{remark}

\newtheorem{remark}[theorem]{Remark}

\theoremstyle{definition}
\newtheorem{definition}[theorem]{Definition}

\raggedbottom

\begin{document}

\title[Existence and nonexistence of minimizers for classical capillarity problems in presence of ...]{Existence and nonexistence of minimizers for classical capillarity problems in presence of nonlocal repulsion and gravity}

\author{Giulio Pascale}
\address{Dipartimento di Matematica e Applicazioni "Renato Caccioppoli", Universit\'a degli Studi di Napoli "Federico II", via Cintia - Monte Sant'Angelo, 80126 Napoli, Italy} \email{giulio.pascale@unina.it \orcidlink{0000-0003-1680-3425}}

\date{\today}
\subjclass{Primary: 49J40, 49Q10. Secondary: 49Q20, 28A75, 26B30.}
\keywords{Isoperimetric problem, capillarity, Riesz energy, gravitational potential, existence, nonexistence, generalized existence}

\begin{abstract}
We investigate, under a volume constraint and among sets contained in a Euclidean half-space, the minimization problem of an energy functional given by the sum of a capillarity perimeter, a nonlocal interaction term and a gravitational potential energy. 
The capillarity perimeter assigns a constant weight to the portion of the boundary touching the boundary of the half-space.
The nonlocal term is represented by a double integral of a positive kernel $g$, while the gravitational term is represented by the integral of a positive potential $G$. 

We first establish existence of volume-constrained minimizers in the small mass regime, together with several qualitative properties of minimizers. 
The existence result holds for rather general choices of kernels in the nonlocal interaction term, including attractive-repulsive ones.\\
When the nonlocal kernel $g(x)=1/|x|^\beta$ with $\beta \in (0,2]$, we also obtain nonexistence of volume constrained minimizers in the large mass regime.\\
Finally, we prove a generalized existence result of minimizers holding for all masses and general nonlocal interaction terms, meaning that the infimum of the problem is realized by a finite disjoint union of sets thought located at ``infinite distance'' one from the other.

These results stem from an application of quantitative isoperimetric inequalities for the capillarity problem in a half-space.
\end{abstract}

\maketitle

\setcounter{tocdepth}{1} % Comando per non far comparire subsections nella toc
%\vspace{-0.8cm}
\tableofcontents
%\vspace{-0.8cm}

\section{Introduction}\label{sec:1}
The classical liquid drop model for the atomic nucleus in the Euclidean space $\R^n$, for $n \ge 2$, aims to characterize minimizers of the functional \begin{equation*} P(E) + \int_E\int_E \frac{1}{|y - x|^\beta}\de y \de x \end{equation*} among sets with a given volume, where $0 < \beta < n$ is a given parameter and $P(E)$ denotes the perimeter of $E \subset \R^n$.
There is a clear competition between the two terms in the energy, since the ball at the same time minimizes the perimeter, by the isoperimetric inequality \cite{DeGiorgiIsoperimetrico}, \cite[Theorem 14.1]{MaggiBook}, and maximizes the second term, by the Riesz rearrangement inequality \cite{RieszInequality}, \cite[Theorem 3.7]{LiebLoss}.
The physically relevant case is when $\beta = 1$ and $n = 3$, that is when the second term is the Coulombic energy. 
This case goes back to Gamow's liquid drop model for atomic nuclei \cite{Gamow1930}, subsequently developed by von Weizs\"acker \cite{Weizsacker}, Bohr \cite{Bohr, BohrWheeler}, and many other researchers.
This model is used to explain various properties of nuclear matter \cite{CohenPlasilSwiatecki, CohenSwiatecki, MyersSwiatecki, PelakasisTsamopoulosManolis}, but it also arises in the Ohta-Kawasaki model for diblock copolymers \cite{OhtaKawasaki} and in many other physical situations, see \cite{CareMarch, ChenKhachaturyan, deGennes, EmeryKivelson, GlotzerDiMarzioMuthukumar, KovalenkoNagaev, Mamin, Nagaev, NyrkovaKhokhlovDoi}. 
For a more specific account on the physical background of this kind of problems, we refer to \cite{MuratovTheory}. \par
\vspace{2mm}
In the last decades, the model for general $\beta$ and $n$ has gained renewed interest in mathematics literature, in order to investigate existence and non-existence of minimizers and the minimality of the ball.
In \cite{KnupferMuratovI, KnupferMuratovII}, Kn\"upfer and Muratov proved that balls are the only minimizers in the small mass regime when $n = 2$ and when $3 \le n \le 7$ with $0 < \beta < n - 1$.
At the same time they obtained nonexistence results when $n \ge 2$ and $\beta \in (0, 2)$.
See also the alternative proofs \cite{FrankKillipNam, LuOtto, JulinIsoperimetric} in the case $\beta = 1$ and $n = 3$ and \cite{MuratovZaleski} in the case $n = 2$ with $\beta$ sufficiently small.
Later on, Bonacini and Cristoferi \cite{BonaciniCristoferi} proved existence and uniqueness results for every $n$ and $0 < \beta < n - 1$.
Finally, Figalli, Fusco, Maggi, Millot and Morini \cite{FigalliFuscoMaggiMillotMorini} studied the case $0 < \beta < n$ for every $n$, even replacing the perimeter $P(E)$ by the fractional perimeter $P_s(E)$, $0 < s \le 1$.
We refer to \cite{ChoksiMuratovTopaloglu, NovagaOnoueReview} for a review on the topic and to \cite{ChoksiNeumayerTopaloglu, FrankNonspherical, FrankLiebCompactness, FrankNam, JulinRemark, NovagaOnoueHolder} and references therein for further results on the nonlocal liquid drop model.
A variant of the problem with a constant background has been studied by \cite{AlbertiChoksiOtto, CicaleseSpadaro, ChoksiPeletierI, ChoksiPeletierII, EmmertFrankKonig, FrankLiebPeriodic, KnupferMuratovNovaga}; see also \cite{AcerbiFuscoMorini, AlamaBronsardChoksiTopaloglu, CesaroniNovagaMinkowski, FrankNamVandenbosch, GoldmanMuratovSerfatyI, GoldmanMuratovSerfatyII, MuratovDroplet, NamIonization, OnoueNonexistence, SternbergTopaloglu} for further results on related problems. \par
\vspace{2mm}
In this paper we prove existence and nonexistence results of minimizers in a capillarity context with nonlocal and gravitational terms.
If $E$ is a measurable set in the half-space $\{x_n > 0\} \subset \R^n$, $n \ge 2$, and $\lambda \in (- 1, 1)$, we define the weighted perimeter functional \begin{equation*} P_\lambda(E) := P(E, \R^n \setminus H) - \lambda \hh^{n - 1}(\partial^*E \cap \partial H), \end{equation*} where $\hh^k$, with $k \ge 0$, denotes the $k$-dimensional Hausdorff measure in $\R^n$, $H := \{x_n \le 0\}$ and $\partial^*E$ denotes the reduced boundary of $E$, see \cref{sec:2}.
Interpreting the perimeter as a measure of the surface tension of a liquid drop, the constant $\lambda$ basically represent the relative adhesion coefficient between a liquid drop and the solid walls of the container given by $\{x_n > 0\}$.
If $m > 0$, minimizers for the isoperimetric capillarity problem \begin{equation}\label{minimi:plambda} \inf\{P_\lambda(E) \st E \subset \{x_n > 0\}, \; |E| = m\} \end{equation} are given by suitably truncated balls lying on the boundary of the half-space, see \cite[Theorem 19.21]{MaggiBook}.
More precisely, if $B^\lambda := \{x \in B_1(0) \subset \R^n \st \Braket{x, e_n} > \lambda\}$, $m > 0$ and \begin{equation*} B^\lambda(m) := \frac{m^{\frac{1}{n}}}{|B^\lambda|^{\frac{1}{n}}} (B^\lambda - \lambda \, e_n), \end{equation*} minimizers for~\eqref{minimi:plambda} are sets of the form \begin{equation*} B^\lambda(m, x) := B^\lambda(m) + x, \end{equation*} with $x \in \{x_n = 0\}$, see also \cite[Fig. 1]{PascalePozzettaQuantitative} \\
If $g : \R^n \setminus \{0\} \to (0, \infty)$, we define the Riesz-type potential energy \begin{equation*}\label{riesz:definition} \rr(E) := \int_E\int_E g(y - x) \de y \de x. \end{equation*} 
Finally, given a function $G : (0, \infty) \to (0, \infty)$, we define the gravity-type potential energy \begin{equation*} \g(E) := \int_EG(x_n) \de x. \end{equation*}
If $m > 0$ and we denote \begin{equation*} \ff^\lambda(E) := P_\lambda(E) + \rr(E) + \g(E), \end{equation*} we consider the minimization problem \begin{equation}\label{minimum:problem} \inf \{\ff^\lambda(E) \st E \subset \{x_n > 0\}, \, |E| = m\}. \end{equation} \par

\vspace{2mm}

In the context of minimization of energies with general Riesz-type potential in the Euclidean space $\R^n$, Novaga and Pratelli in \cite{NovagaPratelli} showed the existence of minimizers in a generalized sense.
Later on, Carazzato, Fusco and Pratelli in \cite{CarazzatoFuscoPratelli} showed that the ball is the unique minimizer in the small mass regime when the nonlocal kernel $g$ is radial and decreasing.
Pegon in \cite{PegonLarge} proved that, if the kernel $g$ decays sufficiently fast at infinity and if the volume is sufficiently large, then minimizers exist and converge to a ball as the volume goes to infinity.
Then, Merlet and Pegon \cite{MerletPegon} proved that in the planar case minimizers are actually balls in the large mass regime.
In \cite{NovagaOnoueFractional}, Novaga and Onoue obtained existence of minimizers for any volume and convergence to a ball as volume goes to infinity,  if the Riesz potential decays sufficiently fast and even if the perimeter $P(E)$ is replaced by the fractional perimeter $P_s(E)$, $0 < s < 1$. 
We refer to \cite{BessasNovagaOnoue, CesaroniNovagaNonlocal, MelletWu, MuratovSimon, RigotEnsemble} and references therein for further results on variational problems involving nonlocal energies. \par
\vspace{2mm}
The first result in this paper is an existence result in the capillarity context and in the small mass regime, together with a bound on the Fraenkel asymmetry \begin{equation*} \alpha_\lambda(E) := \inf \left\{\frac{|E \Delta B^\lambda(m, x)|}{m} \st x \in \{x_n = 0\}\right\}, \qquad{\rm for}\, {\rm any} \; E \subset \{x_n > 0\} \; {\rm with} \; |E| = m, \end{equation*} and some qualitative properties of volume constrained minimizers. 
At the same time, under suitable conditions on the potential energies, existence result extends to all masses.

\begin{theorem}\label{theorem:1}
Let $g$ be a $\rr$-admissible $q$-growing function, $q \ge 0$, and let $G$ be a $\g$-admissible function.
There exists a mass $\bar m = \bar m(n, \lambda, g, G, q) > 0$ such that, for every $m \in (0, \bar m)$, there exists a minimizer $E$ of $\ff^\lambda$ in the class \begin{equation*} \mathcal{A}_m := \{\Omega \subset \R^n \setminus H \,{\rm measurable} \st |\Omega| = m\} \end{equation*} and it satisfies \begin{equation*} \alpha_\lambda(E) \le c(n, \lambda, g, G) \, m^{\frac{1}{2n}}. \end{equation*}
Moreover, if $g$ is also infinitesimal, minimizers are indecomposable and, if in addition $g$ is symmetric, minimizers are essentially bounded.

Furthermore, if $g$ is also $0$-growing, infinitesimal and symmetric and $G$ is coercive, minimizers have no holes, i.e., if $E$ is a minimizer of $\ff^\lambda$ in $\mathcal{A}_m$, there is no set $F \subset \R^n \setminus (H \cup E)$ with $|F| > 0$ such that \begin{equation*} P_\lambda(E) = P_\lambda (E \cup F) + P(F, \R^n \setminus H) + \lambda \hh^{n - 1}(\partial^*F \cap \partial H). \end{equation*}

Finally, if $g$ is $\rr$-admissible and coercive and $G$ is $\g$-admissible and coercive, there exists a minimizer of $\ff^\lambda$ in $\mathcal{A}_m$ for any $m > 0$.
\end{theorem}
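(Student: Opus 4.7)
The plan is to apply the direct method of the calculus of variations, using a concentration-compactness argument to rule out loss of mass and the quantitative capillarity isoperimetric inequality of \cite{PascalePozzettaQuantitative} to upgrade a limiting candidate to a true minimizer. First I would take a minimizing sequence $\{E_k\} \subset \mathcal A_m$. The competitor $B^\lambda(m,0)$ gives the a priori bound $\ff^\lambda(E_k) \le \ff^\lambda(B^\lambda(m,0))$, yielding uniform control on $P_\lambda(E_k)$, $\rr(E_k)$, and $\g(E_k)$. Positivity of $G$ prevents vertical escape to $x_n = +\infty$, while along the horizontal directions (the only invariance of $P_\lambda + \g$) a Lions-type decomposition, built on relative $BV$-compactness for sets of bounded capillarity perimeter, produces separated pieces $E_k^j$ with $|E_k^j| \to m^j$ and $\sum_j m^j = m$.

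The crux is to exclude non-trivial splitting for small $m$. Each surviving piece contributes $P_\lambda(E_k^j) \ge P_\lambda(B^\lambda(m^j,0))$, and strict concavity of $t \mapsto t^{(n-1)/n}$ forces the split configuration to pay, in $P_\lambda$, a defect of order $m^{(n-1)/n}$ relative to the single-piece configuration. The possible gain from splitting in $\rr$ is controlled via the $q$-decreasing decay of $g$ and is of order $m^{2 - q/n}$ (by a scaling comparison on the cross interaction $\int_{E_k^i}\int_{E_k^j} g(y-x)\de y\de x$), while the gravitational gain is controlled by $\g$-admissibility. Choosing $\bar m$ so that the perimeter defect dominates both corrections forces concentration on a single piece, and the quantitative capillarity inequality then upgrades horizontal $L^1$ convergence of translates of $E_k^{j_0}$ to a minimizer of $\ff^\lambda$, since each term is lower semicontinuous under such convergence.

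For the qualitative properties: indecomposability follows because if $E = E_1 \sqcup E_2$ with $\operatorname{dist}(E_1, E_2) > 0$ and $|E_i| > 0$, translating $E_2$ horizontally to infinity keeps $P_\lambda$ and $\g$ fixed but strictly decreases $\rr$ by infinitesimality of $g$, contradicting minimality. Essential boundedness is obtained by a truncation argument in the spirit of Kn\"upfer--Muratov: exploiting symmetry of $g$, a far-away tail can be pushed toward the bulk, decreasing $\rr$ by more than the isoperimetric surface overhead allows for small mass. Absence of holes uses directly the identity in the statement: filling a hole $F$ gives $P_\lambda(E \cup F) = P_\lambda(E) - P(F, \R^n \setminus H) - \lambda \hh^{n-1}(\partial^* F \cap \partial H) < P_\lambda(E)$, and a rescaling by a factor $t < 1$ around a point of $\partial H$ restores the volume $m$; $(n-1)$-homogeneity decreases $P_\lambda$ further, the $0$-decreasing symmetric hypothesis on $g$ prevents an increase in $\rr$, and coercivity of $G$ together with the contraction toward $\partial H$ strictly decreases $\g$.

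The main obstacle is the splitting step: the three energies scale differently in $m$, and the argument must quantify precisely how the quantitative capillarity inequality dominates the nonlocal and gravitational gains along a possibly dispersing sequence, rather than relying on a static comparison with a single ball. The horizontal translation invariance also requires a careful recentering of each piece at every stage. Once existence is established, the three qualitative properties reduce to relatively local energy perturbations against the minimizer.
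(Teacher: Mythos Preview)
Your existence strategy via concentration--compactness is a legitimate alternative, but it is not what the paper does and is more elaborate than needed here. The paper avoids any Lions-type decomposition: a single localization lemma shows that for small $m$ every competitor $F$ can be replaced by some $L\subset[-1,1]^{n-1}\times[0,2]$ with $\ff^\lambda(L)\le\ff^\lambda(F)$. The quantitative capillarity inequality enters only to say that either $B^\lambda(m)$ already beats $F$, or $|F\setminus B^\lambda(m)|$ is small; then an ODE comparison on $\rho\mapsto|F\setminus B_\rho|$ (or a rescaling of $F\cap B_\rho$) produces $L$. This yields a minimizing sequence supported in a fixed cube, and the direct method finishes immediately---no splitting analysis along a sequence is required. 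For boundedness the paper does not ``push a far-away tail toward the bulk'' (for a repulsive kernel this would \emph{increase} $\rr$, not decrease it); instead it proves a vertical bound by truncation combined with a local volume-fixing variation, then shows the minimizer is $(K,r_0)$-quasiminimal and reads off boundedness from the resulting density estimates. Your indecomposability argument is essentially the paper's.

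Your argument for absence of holes has a genuine gap. After filling $F$ you shrink $E\cup F$ by a factor $t<1$ to restore volume, claiming that $0$-decreasing $g$ ``prevents an increase in $\rr$'' and that coercivity of $G$ forces $\g$ down. Both claims fail under the stated hypotheses: $0$-decreasing means $g(\alpha x)\le g(x)$ for $\alpha>1$, hence $g(tz)\ge g(z)$ for $t<1$, so shrinking makes pairwise interactions \emph{larger}; and $\g$-admissibility gives $G(\alpha s)\le\alpha^n G(s)$ only for $\alpha>1$, which is the wrong direction, while coercivity says nothing about monotonicity of $G$ near $0$. The paper restores the volume not by rescaling but by intersecting $E\cup F$ with a sublevel $\{x_n<t\}$, which trivially gives $\rr(D)\le\rr(E\cup F)$ and $\g(D)\le\g(E\cup F)$ by inclusion. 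To control $\g(E\cup F)-\g(E)=\g(F)$ the paper first establishes a \emph{uniform} vertical bound on minimizers (independent of $E$), via an improved density estimate, so that any hole $F$ necessarily sits in $\{0<x_n\le\bar T\}$ and $\g(F)\le C\,|F|$; then the perimeter saving $\gtrsim |F|^{(n-1)/n}$ dominates the $O(|F|)$ increase in $\rr+\g$ for small $|F|$, which in turn is guaranteed by small $m$.
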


Let us make some comments on the definitions present in \cref{theorem:1}, while referring to \cref{sec:2} for their precise enunciation.
The ``admissibility'' requirements on the kernels just refer to some necessary integrability conditions.
The infinitesimality of $g$ and the coercivity of $g$ and $G$ concern the behavior of these functions as the variable diverge, while the symmetry of $g$ is referred to the symmetry with respect to the origin.
The $q$-growing property is satisfied by rather general nonlocal interaction terms, not only by repulsive ones. 
Indeed, we point out that classical radial decreasing kernels are $0$-growing, but at the same time attractive-repulsive kernels of the type \begin{equation}\label{eq:AttractiveRepulsive} g(x) = |x|^{\beta_1} + \frac{1}{|x|^{\beta_2}},\qquad \beta_1 > 0, \quad \beta_2 \in (0, n), \end{equation} are $q$-growing for any $q\ge \beta_1$, even if they diverge positively as $|x| \to + \infty$, see \cref{def:KernelgPiccolo}  and \cref{attractive:repulsive}. In particular attractive-repulsive kernels as in \eqref{eq:AttractiveRepulsive} represent a possible choice in the definition of $\ff^\lambda$ in \cref{theorem:1}.
Minimization problems for attractive-repulsive functionals have been widely studied in the last years.
Existence and nonexistence results are addressed in \cite{BurchardChoksiTopaloglu, FrankLiebLiquidSolid, FrankLiebProof}, while stability and uniqueness of minimizers have been respectively studied in \cite{BonaciniCristoferiTopaloglu, LopesUniqueness}.
We refer to \cite{CarazzatoNote, CarazzatoPratelli, CarazzatoPratelliTopaloglu, CarrilloDelgadinoMellet} for further results about analogous problems. 

We remark that, by a symmetry argument, analyzing the Euler-Lagrange equation of problem~\eqref{minimum:problem}, it is possible to verify that the sets $B^\lambda(m, x)$ are not volume constrained minimizers of $\ff^\lambda$; actually, the isoperimetric bubbles $B^\lambda(m, x)$ are not even volume constrained critical points of $\ff^\lambda$.
It is left as a future project to study quantitative properties of minimizers to ~\eqref{minimum:problem}, such as the proximity of minimizers from bubbles $B^\lambda(m, x)$ in terms of the smallness of the mass.
Related results, concerning liquid drops or crystals lying in equilibrium under the action of a potential energy, are addressed in \cite{FigalliMaggiShape, MaggiMihaila}.

\vspace{2mm}
For large masses and for suitable choices of repulsive kernels $g$, the repulsive interaction dominates and the variational problem in \cref{theorem:1} does not admit a minimizer.

\begin{theorem}\label{theorem:2}
Let \begin{equation*} g(x) = \frac{1}{|x|^\beta}, \qquad 0 < \beta < n, \, x \in \R^n \setminus \{0\} \end{equation*} and let $G$ be $\g$-admissible.
For every $\beta \in (0, 2]$, there exists $\tilde m > 0$, depending on $n$, $\lambda$, $\beta$, $G$, such that for all $m \ge \tilde m$ the minimization problem \begin{equation*} \inf\{ \ff^\lambda(E) \st E \subset \R^n \setminus H, |E| = m \} \end{equation*} has no minimizers. \end{theorem}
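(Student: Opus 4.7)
The plan is to argue by contradiction through a ``slice and horizontal translate to infinity'' procedure, in the spirit of the Kn\"upfer--Muratov nonexistence strategy \cite{KnupferMuratovI, KnupferMuratovII} adapted to the present capillarity-with-gravity framework. The decisive structural observation is that translations parallel to $\partial H$ preserve both $P_\lambda$ (by translation invariance along the wall) and $\g$ (since $G$ depends only on the $x_n$-coordinate), while driving the cross Riesz interaction between spatially separated sets to zero. Thus horizontal translations furnish a ``free'' one-parameter family of admissible competitors and, crucially, the gravitational term plays no role in the nonexistence mechanism.

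Suppose for contradiction that a minimizer $E \in \mathcal{A}_m$ exists. Testing against the bubble $B^\lambda(m) \in \mathcal{A}_m$ gives $\ff^\lambda(E) \le \ff^\lambda(B^\lambda(m))$, whence the a priori bounds $P_\lambda(E) \le C m^{2 - \beta/n}$ and $\rr(E) \le C m^{2 - \beta/n}$ for $m$ large. Standard density and $\Lambda$-minimality estimates for the capillarity perimeter, available here because $\beta < n$ keeps the Riesz potential generated by $E$ uniformly bounded (in the spirit of \cite{BonaciniCristoferi,PascalePozzettaQuantitative}), then provide essential boundedness of $E$ and a tangential diameter-type control on its shape.

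Next, for $i \in \{1, \dots, n-1\}$ and $t \in \R$, slice $E$ along the hyperplane $\Pi_t^i := \{x_i = t\}$ orthogonal to $\partial H$, and set $E_1^t := E \cap \{x_i < t\}$, $E_2^t := E \cap \{x_i > t\}$, $f_i(t) := \hh^{n-1}(E \cap \Pi_t^i)$. For $L > 0$ large, the competitor $E_L := E_1^{t} \cup (E_2^{t} + L e_i) \in \mathcal{A}_m$ remains in $\{x_n > 0\}$ (since $e_i$ is tangent to $\partial H$) and computes as
\begin{equation*}
P_\lambda(E_L) \le P_\lambda(E) + 2 f_i(t), \quad \g(E_L) = \g(E), \quad \rr(E_L) = \rr(E) - 2\int_{E_1^{t}}\int_{E_2^{t}} \frac{dy\,dx}{|y-x|^\beta} + o_L(1).
\end{equation*}
Passing to the limit $L \to +\infty$ in the minimality inequality $\ff^\lambda(E) \le \ff^\lambda(E_L)$ forces, for every admissible $i$ and $t$, the crucial estimate $\int_{E_1^{t}}\int_{E_2^{t}} |y-x|^{-\beta}\,dy\,dx \le f_i(t)$.

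The contradiction is extracted by selecting $i$ and $t = t_*$ so that $f_i(t_*)$ is controlled from above by $C m^{(n-1)/n}$ (using the perimeter bound and the tangential control from Step~2, possibly via a mean-value/selection argument combined with the intermediate value theorem to ensure $|E_1^{t_*}|$ is comparable to $m/2$), while the cross Riesz integral is bounded from below by $c m^{2-\beta/n}$ via a geometric argument exploiting the tangential diameter estimate ($|y-x|$ is bounded by $C m^{1/n}$ for $x \in E_1^{t_*}$, $y \in E_2^{t_*}$). The resulting inequality $m^{2 - \beta/n} \le C m^{(n-1)/n}$, i.e., $m^{(n+1-\beta)/n} \le C$, fails for $m \ge \tilde m$ large enough, since $\beta \le 2 < n + 1$. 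The main technical obstacle is producing the tangential diameter/density control on $E$ and matching it with a good slice selection at the critical exponent $\beta = 2$: there the scaling of the two sides becomes borderline and a logarithmic refinement of the selection step may be required, as in the classical argument of \cite{KnupferMuratovII}.
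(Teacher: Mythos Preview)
Your slicing-and-translating mechanism is exactly the one the paper uses (Lemma~\ref{lemma7:fn}): for almost every horizontal direction $\nu$ and level $t$, the cross Riesz interaction between the two half-slices is dominated by $\hh^{n-1}(E\cap\{\Braket{\nu,x}=t\})$. The gap lies in how you extract the contradiction from this inequality. Your lower bound on the cross integral rests on the claim that $|y-x|\le Cm^{1/n}$ on $E\times E$, i.e.\ that a minimizer has diameter comparable to $m^{1/n}$. This is false: comparing with the energy of a collection of small bubbles (Corollary~\ref{theorem3.4:km}) one only gets $\ff^\lambda(E)\le Cm$, and the paper's Lemma~\ref{lemma7.2:km} shows that any minimizer with $m\ge1$ satisfies ${\rm diam}\,E\ge c\,m^{1/\beta}$, which for $\beta<n$ is strictly larger than $m^{1/n}$. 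Hence your claimed lower bound $c\,m^{2-\beta/n}$ on the cross integral, and with it the inequality $m^{(n+1-\beta)/n}\le C$, is unsupported. The companion upper bound $f_i(t_*)\le Cm^{(n-1)/n}$ is likewise not justified: choosing $t_*$ so that $|E_1^{t_*}|=m/2$ gives no control on $f_i(t_*)$, and averaging $f_i$ over the tangential width only yields $f_i\lesssim m^{1-1/\beta}$ at some level, which need not be compatible with the mass-splitting constraint.

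The paper avoids selecting a single slice altogether. It \emph{integrates} the slice inequality in $t$ and then in $\nu\in\mathbb{S}^{n-1}$ to obtain the Frank--Nam type bound $\int_E\int_E|x-y|^{1-\beta}\,\d y\,\d x\le c(n)\,m$ (Lemma~\ref{lemma7:fn}). For $\beta\le1$ this already contradicts either the diameter bounds of Lemma~\ref{lemma7.2:km} or (for $\beta=1$) the trivial identity $\int_E\int_E 1=m^2$. For $\beta\in(1,2]$ the contradiction requires a lower bound on $\int_E\int_E|x-y|^{1-\beta}$, which the paper obtains not from a diameter \emph{upper} bound but from \emph{density} along chains of balls: Lemma~\ref{lemma4.3:km} (uniform density $|E\cap B_1(x)|\ge c\min\{1,m\}$, coming from the non-optimality criterion of Lemma~\ref{lemma4.2:km}) and indecomposability (Theorem~\ref{teo:connesso}) yield $|E\cap B_r(x)|\ge cr$ for $1\le r\le\tfrac12{\rm diam}\,E$ (Lemma~\ref{lemma8:fn}), which after integration gives $\int_E\int_E|x-y|^{1-\beta}\ge c\,m\,({\rm diam}\,E)^{2-\beta}$ (or $c\,m\log{\rm diam}\,E$ at $\beta=2$). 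Combined with ${\rm diam}\,E\ge cm^{1/\beta}$ this closes the argument. In short: the right route is \emph{integrate all slices} and pair the result with density/diameter estimates, not pick one slice and hope for a ball-like diameter.
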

Therefore, for a general repulsive kernel $g$, existence may fail for masses large enough, since minimizers tend to split in two or more components which then move apart one from the other in order to decrease the nonlocal energy.
To capture this phenomenon, it is convenient to introduce a generalized energy defined as \begin{equation*}\tilde\ff^\lambda(E) := \inf_{h \in \N}\tilde\ff^\lambda_{h}(E), \end{equation*} where \begin{equation*} \tilde\ff^\lambda_{h}(E) := \inf \left\{\sum_{i = 1}^h \ff^\lambda(E^i) \st E = \bigcup_{i = 1}^h E^i, E^i \cap E^j = \emptyset \quad {\rm for} \; 1 \le i \neq j \le h\right\}. \end{equation*}
Note that in this functional the interaction between different components is not evaluated, which corresponds to consider them ``at infinite distance'' one from the other.
By considering $\tilde \ff^\lambda$ instead of $\ff^\lambda$, we can prove the following generalized existence result.
\begin{theorem}\label{theorem:3} Let $g$ be a $\rr$-admissible $q$-growing function, $q \ge 0$, and let $G$ be a $\g$-admissible function.
For every $m > 0$ there exists a minimizer of $\tilde\ff^\lambda$ in the class \begin{equation*} \mathcal{A}_m = \left\{\Omega \subset \R^n\setminus H \; {\rm measurable} \st |\Omega| = m\right\}. \end{equation*} 
More precisely, there exist a set $E \in \mathcal{A}$ and a subdivision $E = \cup_{j = 1}^h E^j$, with pairwise disjoint sets $E^j$, such that \begin{equation*} \tilde\ff^\lambda(E) = \sum _{j = 1}^h \ff^\lambda(E^j) = \inf \left\{\tilde\ff^\lambda(\Omega) \st \Omega \in \mathcal{A}_m\right\}. \end{equation*} 
Moreover, for every $1 \le j \le h$, the set $E^{j}$ is a minimizer of both the standard and the generalized energy for its volume, i.e. \begin{equation*} \tilde\ff^\lambda(E^{j}) = \ff^\lambda(E^{j}) = \min \left\{\tilde\ff^\lambda(\Omega) \st \Omega \subset \R^n\setminus H, \, |\Omega| = |E^{j}|\right\}. \end{equation*} \end{theorem}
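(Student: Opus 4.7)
The plan is to adapt the concentration-compactness scheme for generalized minimizers developed by Novaga--Pratelli \cite{NovagaPratelli} to the capillarity half-space setting. Set $I(m) := \inf\{\tilde\ff^\lambda(\Omega) \st \Omega \in \mathcal{A}_m\}$. First I would verify that $I(m) < +\infty$, using a bubble $B^\lambda(m, 0)$ as a competitor, and that $I$ is subadditive: $I(m_1 + m_2) \le I(m_1) + I(m_2)$. This last property is essentially free from the very definition of $\tilde\ff^\lambda$, since the ``disjoint union'' of near-minimizers of masses $m_1$ and $m_2$ is admissible without paying any interaction energy. In particular no separation ``at infinity'' is required, which is crucial because for attractive-repulsive kernels an actual spatial separation would increase $\rr$.

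Next I would take a minimizing sequence $\{E_k\}$ for $I(m)$ with decompositions $E_k = \bigcup_i E_k^{(i)}$ realizing $\tilde\ff^\lambda(E_k)$ up to $o(1)$, and assume each $E_k^{(i)}$ indecomposable after further splitting (which only decreases $\sum_i \ff^\lambda(E_k^{(i)})$). The capillarity isoperimetric inequality $P_\lambda(F) \ge c_{n, \lambda} |F|^{\frac{n-1}{n}}$, together with the energy bound, gives uniform perimeter control on each piece. Choosing the largest-volume piece $E_k^{(1)}$ and translating horizontally along $\partial H$ (which leaves $\ff^\lambda$ invariant), I would arrange that a uniformly positive amount of mass lies in a fixed ball, and pass to a subsequential $L^1_{\mathrm{loc}}$-limit $E^{(1)} \subset \R^n \setminus H$ of mass $m_1 \in (0, m]$. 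Lower semicontinuity of $P_\lambda$, $\rr$, and $\g$ would yield $\ff^\lambda(E^{(1)}) \le \liminf_k \ff^\lambda(E_k^{(1)})$, with positivity of $G$ excluding vertical dispersion of mass to $\{x_n = +\infty\}$.

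I would then iterate this procedure on the residual mass $m - m_1$, extracting $E^{(2)}, E^{(3)}, \ldots$. Termination after finitely many steps follows from two facts: once the remaining mass lies below the threshold $\bar m$ of \cref{theorem:1}, the residual problem is solved directly by a bounded minimizer; moreover the isoperimetric lower bound $\ff^\lambda(F) \ge c_{n,\lambda} |F|^{\frac{n-1}{n}}$ combined with the total-energy bound rules out infinitely many extracted pieces of non-negligible volume. The resulting finite family $(E^{(j)})_{j=1}^h$ with $\sum_j |E^{(j)}| = m$ satisfies $\tilde\ff^\lambda(E) = \sum_j \ff^\lambda(E^{(j)}) = I(m)$ for $E = \bigcup_j E^{(j)}$. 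For the final assertion, if some $E^{(j)}$ were not a minimizer for mass $m_j = |E^{(j)}|$, then a strictly better competitor of mass $m_j$ would, inserted into the disjoint family via $\tilde\ff^\lambda$ (which ignores inter-component interactions), produce a strictly lower value of $\tilde\ff^\lambda$ on a set of mass $m$, contradicting minimality.

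The main obstacle I foresee is the concentration step: ensuring that after a suitable horizontal translation a positive fraction of the mass of $E_k^{(1)}$ stays trapped in a fixed ball as $k \to \infty$. The gravitational term $\g$ provides only weak vertical control (it is positive but not assumed coercive in \cref{theorem:3}), and horizontal translation invariance of $\ff^\lambda$ makes a genuine concentration-compactness argument unavoidable. The uniform boundedness of small-mass minimizers from \cref{theorem:1}, together with the quantitative capillarity isoperimetric inequalities alluded to in the abstract, should provide the necessary rigidity both to prevent vanishing and to handle the boundary term $\hh^{n-1}(\partial^* E_k^{(1)} \cap \partial H)$ correctly in the limit.
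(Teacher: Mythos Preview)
Your outline follows the Novaga--Pratelli philosophy, but the step you flag as an ``obstacle'' is in fact the entire difficulty, and nothing in your plan resolves it. Knowing that $E_k^{(1)}$ is indecomposable and has bounded $P_\lambda$ does \emph{not} force a uniformly positive fraction of its mass into a fixed ball after horizontal translation: an indecomposable set can be arbitrarily long and thin. The quantitative isoperimetric inequality~\eqref{quantitativeisoperimetric:inequality} only tells you that a set with small deficit is close to a bubble, but pieces $E_k^{(1)}$ in a minimizing sequence for $\tilde\ff^\lambda$ need not have small deficit. So the $L^1_{\mathrm{loc}}$ limit you extract could have strictly smaller mass than $\liminf_k |E_k^{(1)}|$, and you have no control on the energy of what escapes. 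Your termination argument via \cref{theorem:1} is also incomplete: you would need $\tilde\ff^\lambda = \ff^\lambda$ on masses below $\bar m$, which is not part of that statement and requires its own proof.

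The paper takes a different route that avoids concentration-compactness altogether. The key tools are two cutting lemmas (\cref{lemma3.2:np} and \cref{lemma3.2bis:np}): if a competitor has small mass in a long slab $\{a \le x_j \le b\}$, one can find levels $a^+, b^-$ at which to excise the slab, and the energy \emph{drops} by a quantitative amount. Iterating over all coordinate directions (with a separate lemma for the vertical direction, since cutting near $\partial H$ interacts with the $\lambda$-term) shows (\cref{lemma3.3:np}) that every near-minimizer can be replaced by a set splitting into at most $\bar h$ pieces, each of diameter at most $R$, with $\bar h$ and $R$ depending only on the data. Once the pieces have uniformly bounded diameter, they can be translated into a fixed ball and the direct method applies with no loss of mass. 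A separate short argument, using only the isoperimetric inequality~\eqref{isoperimetric:inequality} and the $q$-decreasing scaling (\cref{lemma3.1:npbis}), first reduces to a fixed finite number $h'$ of pieces in the decomposition. These two reductions --- bounded number of pieces, bounded diameter of each --- are exactly what is missing from your proposal.
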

\begin{remark} We note that, if $g$ is infinitesimal, then \begin{equation*} \inf\left\{\tilde\ff^\lambda(\Omega) \st \Omega \in \mathcal{A}_m\right\} = \inf\left\{\ff^\lambda(\Omega) \st \Omega \in \mathcal{A}_m\right\}. \end{equation*}
The proof can be easily adapted by \cite[Lemma 3.4]{NovagaPratelli}, with attention given to translating the components of a partition without changing the $n$-th component.
In this case, every minimizer of $\ff^\lambda$ is also a minimizer of $\tilde\ff^\lambda$.
It is left as a future project to study connections between minimizers and generalized minimizers when $g$ is not infinitesimal. \end{remark}
\vspace{2mm}

\paragraph{Strategy of the proof and comments}
The proof of \cref{theorem:1} is divided into several steps.
In the spirit of \cite{KnupferMuratovII}, the existence of minimizers in the small mass regime follows by the direct method of the calculus of variations, see \cref{simple:existence}, once we show that for sufficiently small mass every minimizing sequence of the energy may be replaced by another minimizing sequence where all sets have uniformly bounded diameter, see \cref{lemma5.1:km}.
We remark that we heavily use the quantitative isoperimetric inequality for the capillarity problem proved in \cite{PascalePozzettaQuantitative}, which estimates the Fraenkel asymmetry of a competitor with respect to the optimal sets in terms of the energy deficit.
Note that it is unclear at the moment how to apply stronger isoperimetric inequalities of Fuglede-type \cite{CicaleseLeonardi, Fuglede} for nearly spherical sets in the present capillarity framework; instead, stronger isoperimetric inequalities have been used as fundamental tools, for example, in \cite{AcerbiFuscoMorini, BonaciniCristoferi, CarazzatoFuscoPratelli}.
In fact, the classical Fuglede's method relies on the precise knowledge of the eigenvalues of the Laplace-Beltrami operator, which is not available for $P_\lambda$ on optimal sets for generic $\lambda \in (-1, 1)$.
Moreover, in our case it is in general not possible to globally parametrize $C^1$-close boundaries one on the other as normal graphs.\par
\vspace{2mm}
The boundedness result in \cref{theorem:1} follows once we show that minimizers enjoy uniform density estimates at boundary points.
In order to do so, we prove that, under suitable conditions on the Riesz potential, minimizers are $(K, r_0)$-quasiminimal sets for all masses, see \cref{quasiminimal:set} and \cref{lemma:quasiminimal}.
Indeed quasiminimal sets have well-known topological regularity properties (\cref{density:quasiminimal}), which easily guarantee boundedness, see \cref{teo:limitato}.
Note that the lack of symmetry of the problem, due to the presence of gravitational potential and the fact that ambient space is a half-space, forces us to deal with the vertical direction in a separate way, see \cref{vertical:boundedness}. \par
% \vspace{2mm}
% In order to prove indecomposability in \cref{theorem:1}, we argue by contradiction, in the spirit of \cite{KnupferMuratovII}. 
% In particular, we move possible components far apart from each other by a translation in a horizontal direction, and then we compare the energy of the new set with the original configuration, see \cref{teo:connesso}. \par
\vspace{2mm}
The absence of holes is based on the combination of some techniques from \cite{KnupferMuratovII} and \cite{NovagaPratelli}. 
We firstly prove some density estimates which improve, under suitable hypotheses on $g$, the analogous estimates for quasiminimal sets, by providing bounds independent of the minimizer, see \cref{lemma4.3:km}. 
In fact, this allows to prove the boundedness in the vertical direction with a bound independent of the minimizer, see \cref{vertical:boundedness2}, and to obtain absence of holes arguing by contradiction, see \cref{teo:nobuchi}. \par
\vspace{2mm}

The proof of \cref{theorem:2} is based on the combination of some techniques from \cite{FrankNam} and \cite{KnupferMuratovII} and exploits some estimates on the diameter and the nonlocal potential energy of minimizers.
We remark that the range of the exponent $\beta$ in \cref{theorem:2} is the same as the analogous nonexistence results in the classical setting \cite{ChoksiNeumayerTopaloglu, FrankKillipNam, FrankNam, KnupferMuratovII, LuOtto}. \par
\vspace{2mm}

The proof of \cref{theorem:3} is inspired by \cite{NovagaPratelli} and exploits the isoperimetric inequality for the capillarity functional $P_\lambda$. In our case the argument must be modified to take into account the presence of the gravitational energy and, as before, estimates in the vertical direction must be treated separately. We remark that also the possible choices for the kernels $g$ in our \cref{theorem:3} allow for more freedom than those considered in \cite{NovagaPratelli}.
% on some geometric results which allows to "cut and paste" an excessively long and thin set decreasing its energy in the spirit of \cite{NovagaPratelli}, see \cref{lemma3.2:np} for the first $n - 1$ directions and \cref{lemma3.2bis:np} for the vertical direction.
% Moreover, \cref{lemma3.3:np} allows us to consider, in the definition of $\tilde \ff^\lambda$, partitions with finitely many uniformly bounded sets. 
\vspace{2mm}
\paragraph{Organization} In \cref{sec:2} we collect definitions and facts on sets of finite perimeter and capillarity functional.
In \cref{sec:3} we begin to prove \cref{theorem:1}, in particular we prove existence, boundedness and indecomposability of minimizers.
In \cref{sec:4} we prove \cref{theorem:2}, together with absence of holes in minimizers in the small mass regime, thus completing the proof of \cref{theorem:1}. In \cref{sec:5} we prove \cref{theorem:3}.

\section{Preliminaries}\label{sec:2}

\begin{center}
\emph{From now on and for the rest of the paper we assume that $\lambda \in (-1,1)$ and $n\in \N$ with $n\ge 2$ are fixed.}    
\end{center}

\medskip
\noindent\textbf{List of symbols.}

\begin{itemize}
    \item $|\cdot|$ denotes Lebesgue measure in $\R^n$.
    
    \item $B^\lambda = \{x \in B : \Braket{x, e_n} > \lambda\}$.
    
    \item $B^\lambda(m) := \frac{m^{\frac{1}{n}}}{|B^\lambda|^{\frac1n}}(B^\lambda - \lambda e_n)$, for any $m>0$.
    
    \item $B^\lambda(m,x) := B^\lambda(m)  +x$, for any $x \in \{x_n=0\}$. In particular $B^\lambda(m)=B^\lambda(m,0)$.

    \item $c(\cdot), C(\cdot)$ denote strictly positive constants, that may change from line to line.
    
    \item $H:=\{x_n\le0\}$.

    \item $\hh^d$ denotes $d$-dimensional Hausdorff measure in $\R^n$, for $d\ge0$.
    
    \item $Q_{\frac{r}{2}}$ denotes a generic cube in $\R^n$ of side $r > 0$.
    
    \item $R_\lambda := \max\left\{\sqrt{1 - \lambda^2}, 1 - \lambda\right\}$.
\end{itemize}

\subsection{Sets of finite perimeter}\label{sec:SetsFinitePerimeter}
We recall basic definitions and properties regarding sets of finite perimeter, referring to \cite{AmbrosioFuscoPallara, MaggiBook} for a complete treatment on the subject.
The perimeter of a measurable set $E\subset\R^n$ in an open set $A\subset \R^n$ is defined by
\begin{equation}\label{eq:DefPerimetro}
P(E,A):=\sup\left\{\int_E {\rm div}\, T(x) \de x \st T \in C_c^1(A;\R^n), \,\, \|T\|_\infty \le1\right\}. \end{equation}
Denoting $P(E):=P(E,\R^n)$, we say that $E$ is a set of finite perimeter if $P(E)<+\infty$.
In such a case, the characteristic function $\chi_E$ has a distributional gradient $D\chi_E$ that is a vector-valued Radon measure on $\R^n$ such that \begin{equation*}
\int_E {\rm div}\, T(x) \de x = - \int_{\R^n} T  \de D\chi_E , \quad \forall T \in C_c^1(\R^n; \R^n). \end{equation*} 
It can be proved that the set function $P(E,\cdot)$ defined in \eqref{eq:DefPerimetro} is the restriction of a nonnegative Borel measure to open sets. The measure $P(E,\cdot)$ coincides with the total variation $|D\chi_E|$ of the distributional gradient, and it is concentrated on the reduced boundary
\begin{equation*}
\partial^*E : = \left\{ x \in {\rm spt} |D\chi_E| \st \exists\, \nu^E(x):=-\lim_{r\to0} \frac{D\chi_E(B_r(x))}{|D\chi_E(B_r(x))|} \text{ with } |\nu^E(x)|=1 \right\}.
\end{equation*}
Introducing the sets of density $t\in[0,1]$ points for $E$ defined by
\begin{equation*}
    E^{(t)} := \left\{ x \in \R^n \st \lim_{r\to0} \frac{|E \cap B_r(x)|}{|B_r(x)|}= t \right\},
\end{equation*}
we have that the reduced boundary coincides both with $\R^n\setminus(E^{(1)} \cup E^{(0)})$ and with the set $E^{(1/2)}$ up to $\hh^{n-1}$-negligible sets. The vector $\nu^E$ is called the generalized outer normal of $E$.
Moreover $P(E,\cdot) = \hh^{n-1}\mres \partial^*E$, and the distributional gradient can be written as $D\chi_E = - \nu^E \hh^{n-1}\mres \partial^*E$.

\subsection{Preliminary results on the capillarity functional}
We recall basic properties on the functional $P_\lambda$.
Let $E \subset \R^n \setminus H$ with measure $|E| = m \in (0, + \infty)$. 
Note that \begin{equation*} P_\lambda(E) = \int_{\partial^*E \setminus H} 1 - \lambda \Braket{e_n, \nu^E} \de\hh^{n - 1}; \end{equation*} in particular $P_\lambda(E) > 0$ \cite[Remark 2.2, Corollary 2.5]{PascalePozzettaQuantitative}. \newline
The sets $B^\lambda(m, x)$ uniquely minimize the problem \begin{equation*} \inf \{P_\lambda(E) \st E \subset \R^n, \, |E| = m\}, \end{equation*} and this is encoded in the isoperimetric inequality \cite[Theorem 19.21]{MaggiBook}, \cite[Theorem 3.3]{PascalePozzettaQuantitative} \begin{equation}\label{isoperimetric:inequality} P_\lambda(E) \ge n |B^\lambda|^{\frac{1}{n}} m^{\frac{n - 1}{n}}. \end{equation}
The previous isoperimetric inequality can be strengthened in a quantitative version. 
We define the Fraenkel asymmetry \begin{equation*} \alpha_\lambda(E) := \inf \left\{\frac{|E \Delta B^\lambda(m, x)|}{m} \st x \in \{x_n = 0\}\right\}, \end{equation*} and the isoperimetric deficit \begin{equation*} D_\lambda(E) := \frac{P_\lambda(E) - P_\lambda(B^\lambda(m))}{P_\lambda(B^\lambda(m))}, \end{equation*} for any $E \subset \{x_n > 0\}$ with volume $|E| = m$.
Then the following sharp quantitative isoperimetric inequality holds
\begin{theorem}[\cite{PascalePozzettaQuantitative}] There exists a constant $c(n, \lambda) > 0$ such that for any measurable set $E \subset \R^n \cap \{x_n > 0\}$ with finite measure there holds \begin{equation}\label{quantitativeisoperimetric:inequality} \alpha_\lambda(E)^2 \le c(n, \lambda) D_\lambda(E). \end{equation} \end{theorem}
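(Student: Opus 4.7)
My plan is to prove this quantitative inequality by a Cicalese--Leonardi selection principle combined with a contradiction argument. Suppose, toward a contradiction, that there is a sequence $\{E_k\}\subset\{x_n>0\}$, normalized so $|E_k|=|B^\lambda|$, with $D_\lambda(E_k)\to 0$ yet $\alpha_\lambda(E_k)^2/D_\lambda(E_k)\to\infty$. After translating along $\partial H$, I may assume the infimum defining $\alpha_\lambda(E_k)$ is realized at $x=0$. The goal is to replace $\{E_k\}$ by a better-behaved sequence $\{F_k\}$ that still violates the inequality but enjoys strong regularity and converges to $\partial B^\lambda$, and then to derive a Fuglede-type quadratic lower bound on $D_\lambda(F_k)$ that produces the contradiction.

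For the selection step, I would let $F_k$ be a minimizer, inside a large cylinder $B_R\cap\{x_n>0\}$, of a penalized capillarity functional of the form
\begin{equation*}
F\mapsto P_\lambda(F)+\Lambda\bigl||F|-|B^\lambda|\bigr|+\eta_k\sqrt{(\alpha_\lambda(F)-\alpha_k)^2+\varepsilon_k^2},
\end{equation*}
with $\Lambda$ a fixed large volume-penalty constant, $\alpha_k=\alpha_\lambda(E_k)$, and $\eta_k,\varepsilon_k$ infinitesimals tuned so that $F_k$ still violates the quantitative inequality with a comparable ratio while $|F_k|\to|B^\lambda|$. The point is that each $F_k$ is a $(\Lambda',r_0)$-quasi-minimizer of $P_\lambda$, so the density estimates and regularity theory for capillary quasi-minimizers, including De Giorgi--Tamanini-type interior regularity and Taylor-type regularity up to $\partial H$, give uniform $C^{1,\alpha}$ regularity of $\partial F_k\setminus H$ and, up to extracting a subsequence, $C^{1,\alpha}$ convergence of $\partial F_k\setminus H$ to $\partial B^\lambda\setminus H$.

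The hardest step, and the one the authors flag as the heart of the matter, is the nearly-spherical estimate $\alpha_\lambda(F)^2\le c\,D_\lambda(F)$ for $F$ sufficiently $C^{1,\alpha}$-close to $B^\lambda$. Fuglede's classical spectral method is not directly applicable: $\partial B^\lambda\setminus H$ is a spherical cap with nonempty boundary meeting $\partial H$ transversally at the capillary angle, so the second variation of $P_\lambda$ does not decompose cleanly into spherical harmonics, and near the contact circle one cannot globally parametrize $\partial F\setminus H$ as a normal graph over $\partial B^\lambda\setminus H$. I would attempt to circumvent this by a reflection/doubling argument: extend $F$ and $B^\lambda$ to sets $\tilde F,\tilde B$ in all of $\R^n$ by reflection across $\partial H$, express the Euclidean perimeter of $\tilde F$ in terms of $P_\lambda(F)$ modulo a correction term coming from $-\lambda\hh^{n-1}(\partial^* F\cap\partial H)$, and apply the Fusco--Maggi--Pratelli Euclidean quantitative isoperimetric inequality to $\tilde F$ to control $|\tilde F\Delta \tilde B|$, hence $\alpha_\lambda(F)$. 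Combining this with the $C^{1,\alpha}$ convergence of $F_k$ to $B^\lambda$ contradicts the assumption on the ratio and closes the argument. The main obstacle is precisely the control of the $\lambda$-weighted boundary contribution in this reflection step; if it cannot be absorbed for all $\lambda\in(-1,1)$, a backup plan is a direct second-variation analysis in which $\partial F\setminus H$ is parametrized as a normal graph of a small function over $\partial B^\lambda\setminus H$ away from the contact circle, a neighborhood of the contact circle is handled separately via the uniform density estimates, and a Poincaré-type inequality on the spherical cap adapted to the capillary boundary condition produces the quadratic lower bound on $D_\lambda(F)$ after killing the translation-along-$\partial H$ kernel by the normalization in $\alpha_\lambda$.
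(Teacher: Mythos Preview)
This theorem is not proved in the present paper at all: it is quoted from \cite{PascalePozzettaQuantitative} and used as a black box (see the introduction, where the authors explicitly say ``we heavily use the quantitative isoperimetric inequality for the capillarity problem proved in \cite{PascalePozzettaQuantitative}''). So there is no in-paper proof to compare your proposal against.

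That said, your proposed strategy has genuine gaps that the authors themselves flag. The selection-principle reduction to nearly-optimal sets is fine in spirit, but both of your endgame plans are problematic. For the reflection argument: when $\lambda\neq 0$ the even reflection of $B^\lambda$ across $\partial H$ is a lens, not a ball, and has \emph{strictly positive} Euclidean isoperimetric deficit. Hence applying Fusco--Maggi--Pratelli to $\tilde F$ compares it to a Euclidean ball, not to $\tilde B^\lambda$, and the Euclidean deficit of $\tilde F$ is not controlled by $D_\lambda(F)$; indeed $P(\tilde F)=2P(F,\R^n\setminus H)=2\bigl(P_\lambda(F)+\lambda\,\hh^{n-1}(\partial^*F\cap\partial H)\bigr)$, and the wetted-area term $\hh^{n-1}(\partial^*F\cap\partial H)$ is not a priori bounded in terms of $D_\lambda(F)$. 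So nothing can be ``absorbed'' here for generic $\lambda$.

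Your backup second-variation plan runs into exactly the obstructions the paper singles out in the paragraph ``Strategy of the proof and comments'': Fuglede's method needs the spectrum of the Jacobi operator on the cap with the capillary (Robin-type) boundary condition, which is not explicitly available for generic $\lambda\in(-1,1)$, and near the contact circle one \emph{cannot} globally write $\partial F\setminus H$ as a normal graph over $\partial B^\lambda\setminus H$ even in the $C^{1,\alpha}$-close regime (the free boundary on $\partial H$ moves). The actual proof in \cite{PascalePozzettaQuantitative} avoids Fuglede entirely; it proceeds instead via symmetrization and reduction arguments in the spirit of Fusco--Maggi--Pratelli adapted to the half-space capillarity setting, precisely because the spectral/nearly-spherical route is, as the authors note, presently unavailable for $P_\lambda$.
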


\subsection{Definitions} 
We provide some definitions for the kernels of Riesz-type potential $\rr$ and gravity-type potential $\g$.
In particular, in the following \cref{def:KernelgPiccolo} and \cref{gadmissible:2} we will impose pointwise requirements on the functions $g$, $G$, i.e., it is understood that we fix pointwise defined representatives for the functions $g$, $G$.
\begin{definition}\label{def:KernelgPiccolo} A function $g : \R^n \setminus \{0\} \to (0, \infty)$ is $\mathscr{R}$-admissible if $g \in L^1_{loc}(\R^n \setminus \{0\})$ and $\mathscr{R}(B_1) < \infty$. \\
A $\rr$-admissible function $g : \R^n \setminus \{0\} \to (0, \infty)$ is $q$-growing, for some $q \in [0, \infty)$, if for every $x \in \R^n \setminus \{0\}$ and every $\alpha > 1$ it holds \begin{equation*} g(\alpha x) \le \alpha^q g(x). \end{equation*}
A $\rr$-admissible function $g : \R^n \setminus \{0\} \to (0, \infty)$ is infinitesimal if \begin{equation*} \lim_{|x| \to + \infty} g(x) = 0. \end{equation*}
A $\rr$-admissible function $g : \R^n \setminus \{0\} \to (0, \infty)$ is symmetric if \begin{equation*} g(- x) = g(x) \qquad \forall x \in \R^n \setminus \{0\}. \end{equation*}
A $\rr$-admissible function $g : \R^n \setminus \{0\} \to (0, \infty)$ is coercive if \begin{equation*} g(x) \to +\infty \qquad {\rm as}\; |x| \to + \infty. \end{equation*}
Given two measurable sets $L$, $M \subset \R^n \setminus H$ we let \begin{equation*} \mathscr{R}(L, M) := \int_L \int_M g(y - x) \de y \de x. \end{equation*}\end{definition}

\begin{remark}
The functions $\frac{1}{|x|^\beta}$, for $\beta \in (0, n)$, are $\rr$-admissible, $0$-growing, infinitesimal and symmetric. 
\end{remark}
\begin{remark}\label{attractive:repulsive} The attractive-repulsive kernels $|x|^{\beta_1} + \frac{1}{|x|^{\beta_2}}$, for $\beta_1 > 0$ and $\beta_2 \in (0, n)$, are $\rr$-admissible $\beta_1$-growing symmetric functions.
At the same time they diverge positively as $|x| \to + \infty$. \end{remark}

\begin{definition}\label{gadmissible:2} A function $G : (0, \infty) \to (0, \infty)$ is $\g$-admissible if $G \in L^1_{loc}(0, \infty)$, \begin{equation}\label{sup:Gbis} \sup_{t \in (0, 2)} G(t) < \infty, \end{equation} and \begin{equation}\label{crescita:Gbis} G(\alpha t) \le \alpha^n G(t), \qquad \forall \alpha >1, \, t > 0. \end{equation} 
A $\g$-admissible function $G : (0, \infty) \to (0, \infty)$ is coercive if \begin{equation*} G(t) \to +\infty \qquad {\rm as}\; t \to + \infty. \end{equation*} \end{definition}

\begin{remark} The identity function $G(t)=t$ on $(0, + \infty)$ is a $\g$-admissible function. \end{remark}

\begin{remark}\label{global:growthbis} Conditions~\eqref{sup:Gbis} and~\eqref{crescita:Gbis} easily imply \begin{equation*} G(t) = G(t \cdot 1) \le t^n G(1) \le c(G) \, t^n, \qquad \forall t > 1. \end{equation*} \end{remark}

\section{Existence of minimizers for small masses}\label{sec:3}
\subsection{Existence}
The goal of this Section is to prove the following 
\begin{theorem}\label{simple:existence} Let $g$ be a $\rr$-admissible $q$-growing function and let $G$ be a $\g$-admissible function.
There exists a mass $\bar m = \bar m(n, \lambda, g, G, q) > 0$ such that, for all $m \in (0, \bar m)$, there exists a minimizer of $\ff^\lambda$ in the class \begin{equation*} \mathcal{A}_m := \{\Omega \subset \R^n \setminus H \,{\rm measurable} \st |\Omega| = m\}. \end{equation*} \end{theorem}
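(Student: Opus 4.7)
The plan is to run the direct method of the calculus of variations, with the main technical step being the exclusion of mass escaping to infinity along minimizing sequences. Fix $m>0$ small and a minimizing sequence $\{E_k\}\subset \mathcal{A}_m$ for $\ff^\lambda$. Using $B^\lambda(m)$ as a competitor gives the uniform upper bound $\ff^\lambda(E_k)\le \ff^\lambda(B^\lambda(m))+o(1)$: the perimeter term is $P_\lambda(B^\lambda(m))=n|B^\lambda|^{1/n}m^{(n-1)/n}$, while $\rr$-admissibility combined with the $q$-decreasing property bounds $\rr(B^\lambda(m))$ by a power of $m$ strictly larger than $(n-1)/n$ for small $m$, and the growth of $G$ from \cref{global:growthbis} together with its boundedness near the origin gives the same kind of control on $\g(B^\lambda(m))$. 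In particular $P_\lambda(E_k)$ is uniformly bounded, and since $\rr,\g\ge 0$, the capillarity deficit $D_\lambda(E_k)$ is of order $m^\alpha$ for some $\alpha>0$ once $m$ is small enough.

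The heart of the proof is to replace $\{E_k\}$ by a sequence with uniformly bounded diameter, which is precisely what \cref{lemma5.1:km} should isolate. Applying the quantitative isoperimetric inequality \eqref{quantitativeisoperimetric:inequality} to the smallness of $D_\lambda(E_k)$ produces horizontal translations $x_k\in\{x_n=0\}$ such that $|E_k\Delta B^\lambda(m,x_k)|/m$ is arbitrarily small. After translating $E_k$ by $-x_k$, at least a fraction $(1-\eta)m$ of the volume sits in a fixed ball $B_R$ containing $B^\lambda(m)$. A standard Almgren-type truncation then removes the part of $E_k$ outside $B_{2R}$ and rescales the bulk back to volume $m$; the perimeter saving is of order $m^{(n-1)/n}$ times a small but positive factor, which for small $m$ outweighs the controlled perturbation of $\rr$ (via $q$-decreasing applied to the mild dilation) and of $\g$ (via the growth estimate of \cref{global:growthbis}). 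Iterating, I obtain a new minimizing sequence, still denoted $\{E_k\}$, entirely contained in a fixed bounded region $B_{R_0}\cap(\R^n\setminus H)$ with $R_0=R_0(n,\lambda,g,G,m)$.

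The conclusion is then standard. The uniform bound on $P_\lambda(E_k)$ combined with uniform boundedness of the $E_k$ yields uniform control of the classical perimeter, so by BV compactness there exists $E_\infty\in \mathcal{A}_m$ with $\chi_{E_k}\to \chi_{E_\infty}$ in $L^1(\R^n)$. Lower semicontinuity of $P_\lambda$, written as the sum of a perimeter in the open set $\R^n\setminus H$ and a trace-like term on $\partial H$, is standard. The nonlocal term passes to the limit by dominated convergence, since $g(y-x)\chi_{B_{R_0}\times B_{R_0}}(x,y)$ is integrable on $\R^n\times\R^n$ by $\rr$-admissibility after a change of variables; the gravity term converges similarly because $G$ is bounded on the vertical strip in which the $E_k$ lie. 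Hence $E_\infty$ is a minimizer.

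The main obstacle is the diameter reduction step. Unlike the classical liquid drop model, here $g$ need not be radial decreasing and may even be attractive-repulsive, so the intuition that bringing mass together decreases $\rr$ is not directly available. One must rely on the $q$-decreasing hypothesis to control how $\rr$ behaves under the small dilation used in the truncation-and-rescaling, and on the quantitative isoperimetric inequality from \cite{PascalePozzettaQuantitative} to localize $E_k$ near a bubble in the first place; balancing these two effects, so that the perimeter saving dominates for small $m$, is the delicate point of the argument.
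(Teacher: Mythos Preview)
Your proposal is correct and follows the same approach as the paper: localize via the quantitative isoperimetric inequality \eqref{quantitativeisoperimetric:inequality}, reduce to a uniformly bounded minimizing sequence (this is exactly \cref{lemma5.1:km}), and conclude by compactness and lower semicontinuity of $P_\lambda$ together with continuity of $\rr$ and $\g$ under $L^1$ convergence. The only notable difference is that the paper's \cref{lemma5.1:km} avoids any iteration by means of a dichotomy on $\Sigma := P_\lambda(F_1)+P_\lambda(F_2)-P_\lambda(F)$: either $\Sigma > \tfrac12\ff^\lambda(F_2)$ for all cutting radii, in which case an ODE comparison forces the competitor to already lie in a fixed cube, or $\Sigma \le \tfrac12\ff^\lambda(F_2)$ at some radius, in which case a single truncation-and-rescaling---controlled via the $q$-decreasing property exactly as you indicate---produces a strictly better bounded competitor.
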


We begin by proving some preparatory lemmas, which estimate the energy of some competitors.

\begin{lemma}\label{lemma2.3:np} Let $g$ be $\rr$-admissible and $G$ be $\g$-admissible. 
There exists a constant $c = c(n, \lambda, g,$ $G)$ such that \begin{equation*} \rr(B^\lambda(m)) \le c\,m, \qquad \g(B^\lambda(m)) \le c \, m \end{equation*} for every $0 < m \le |B^\lambda|$.
\end{lemma}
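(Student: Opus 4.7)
The plan is to exploit the scaling $B^\lambda(m) = r\,(B^\lambda - \lambda e_n)$, where $r := (m/|B^\lambda|)^{1/n} \in (0,1]$. For the gravitational term, I first observe that every $x \in B^\lambda(m)$ is of the form $x = r(y - \lambda e_n)$ with $y \in B^\lambda$, so $x_n = r(y_n - \lambda) \in (0, r(1-\lambda)] \subset (0, 1-\lambda) \subset (0,2)$, using $\lambda \in (-1,1)$. Condition~\eqref{sup:Gbis} then yields $G(x_n) \le M := \sup_{(0,2)} G < \infty$ pointwise on $B^\lambda(m)$, and integrating over a set of measure $m$ gives $\g(B^\lambda(m)) \le Mm$.

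For the Riesz term, by translation invariance of the kernel $g(y-x)$ and monotonicity of $\rr$ (holding since $g \ge 0$), translating $B^\lambda(m)$ by $r\lambda e_n$ produces $rB^\lambda \subset rB_1 = B_r$, whence $\rr(B^\lambda(m)) = \rr(rB^\lambda) \le \rr(B_r)$. It will then suffice to show $\rr(B_r) \le c\, r^n$ uniformly for $r \in (0,1]$, since $m = r^n|B^\lambda|$. I would split on the size of $r$: when $r \ge 1/4$, the monotonicity bound $\rr(B_r) \le \rr(B_1) < \infty$ combined with $m \ge |B^\lambda|/4^n$ gives $\rr(B_r) \le (4^n\, \rr(B_1)/|B^\lambda|)\,m$; when $r < 1/4$, the inclusion $B_r - x \subset B_{2r} \subset B_{1/2}$ valid for every $x \in B_r$ reduces the estimate via Fubini to
\[\rr(B_r) \le |B_r|\int_{B_{1/2}} g(z)\,dz,\]
which is of the correct order $r^n$ provided the right-hand integral is finite.

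The only genuine subtlety is thus verifying that $\int_{B_{1/2}} g < \infty$ from the sole hypothesis $\rr(B_1) < \infty$. My plan is to invoke Tonelli (valid because $g \ge 0$): the finiteness of $\int_{B_1}\int_{B_1} g(y-x)\,dy\,dx$ forces $\int_{B_1 - x} g(z)\,dz < \infty$ for a.e.\ $x \in B_1$; picking any such $x \in B_{1/2}$, the inclusion $B_{1/2} \subset B_1 - x$, a consequence of $|z|,|x| < 1/2 \Rightarrow |z+x|<1$, delivers the claim. Setting $c = c(n,\lambda,g)$ to be the maximum of the two constants produced in the two cases then closes the Riesz estimate. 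I remark that a more direct bound via $\int_{B_2} g < \infty$ is \emph{not} available from $\mathscr{R}$-admissibility alone, because the overlap factor $|B_1 \cap (B_1-z)|$ vanishes as $|z| \to 2$ and so singularities of $g$ near $|z|=2$ are invisible to $\rr(B_1)$; this is precisely what forces the case split above and is the main technical point of the proof.
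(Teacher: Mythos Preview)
Your proof is correct but follows a genuinely different route from the paper's. The paper treats both $\rr$ and $\g$ by the same cube-packing argument: it partitions the cube $\bar Q_1 = [-1,1]^{n-1}\times[0,2]$ into $(2N)^n$ congruent subcubes, uses translation-invariance of $\rr$ to get $\rr(\bar Q_1)\ge (2N)^n\rr(\bar Q_{1/(2N)})$, and deduces $\rr(\bar Q_r)\le C r^n$ for $r\in(0,1]$; the same counting, with the pointwise bound $G\le\sup_{(0,2)}G$, handles $\g$. The estimate for $B^\lambda(m)$ then follows from the inclusion $B^\lambda(m)\subset\bar Q_r$ with $r=(m/|B^\lambda|)^{1/n}$.

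Your argument is more direct for $\g$ (the pointwise bound $G(x_n)\le\sup_{(0,2)}G$ already suffices, with no subdivision needed), and for $\rr$ you instead reduce to $\rr(B_r)$ and split on the size of $r$, the interesting case being the Tonelli trick extracting $\int_{B_{1/2}}g<\infty$ from $\rr(B_1)<\infty$ via a.e.\ finiteness of slices. Your remark that $\int_{B_2}g<\infty$ does \emph{not} follow from $\rr$-admissibility alone is a nice observation and justifies why some device (your slice argument, or the paper's packing) is genuinely needed. The paper's approach is more uniform across the two terms and avoids the case split, while yours is perhaps more elementary and makes the role of local integrability of $g$ explicit.
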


\begin{proof} Let us denote by $\bar Q_l \subset \R^n \setminus H$, with $l > 0$, the cube $[-l, l] \times \dots \times [-l, l] \times [0, 2l]$.
For any $N \in \N$ the cube $\bar Q_1$ is the essential union of $(2N)^n$ disjoint isometric cubes $Q_{\frac{1}{2N}}^i$ of side $1/N$.
If $\bar Q_{\frac{1}{2N}} \subset \bar Q_1$ is the cube $\left[-\frac{1}{2N}, \frac{1}{2N}\right] \times \dots \times \left[-\frac{1}{2N}, \frac{1}{2N}\right] \times \left[0, \frac{1}{N}\right]$, evidently \begin{equation*} \rr(\bar Q_1) \ge \sum_{i=1}^{(2N)^n}\rr(Q_{\frac{1}{2N}}^i) = (2N)^n \rr(\bar Q_{\frac{1}{2N}}). \end{equation*}
Moreover \begin{equation*} 2^n\sup_{(0, 2)}G = |\bar Q_1| \sup_{(0, 2)}G = (2N)^n \frac{|\bar Q_1|}{(2N)^n}\sup_{(0, 2)}G = (2N)^n \int_{\bar Q_{\frac{1}{2N}}}\sup_{(0, 2)}G\de x \ge (2N)^n \g(\bar Q_{\frac{1}{2N}}). \end{equation*}
For any $0 < r \le 1$ we denote by $N$ the integer part of $\frac{1}{2r}$, so that $(2r)^{-1} \le 2N \le r^{- 1}$.
The above estimates, together with $4rN \ge 1$, imply that \begin{equation*} \rr(\bar Q_r) \le (4r)^n N^n\rr(\bar Q_{\frac{1}{2N}}) \le 2^n \rr(\bar Q_1) r^n \end{equation*} and \begin{equation*} \g(\bar Q_r) \le (4r)^n N^n\g(\bar Q_{\frac{1}{2N}}) \le 4^n \left(\sup_{(0, 2)}G\right)r^n. \end{equation*}
If $r = \frac{m^{\frac{1}{n}}}{|B^\lambda|^{\frac{1}{n}}} \le 1$, since $B^\lambda(|B^\lambda|) \subset \bar Q_1$, we get $B^\lambda(m) \subset \bar Q_r$ and we conclude that \begin{equation*} \rr(B^\lambda(m)) \le \rr(\bar Q_r) \le c r^n \le c m \end{equation*} and \begin{equation*} \g(B^\lambda(m)) \le \g(\bar Q_r) \le c r^n \le c m. \end{equation*} \end{proof}

\begin{corollary}\label{theorem3.4:km} Let $g$ be $\rr$-admissible and infinitesimal and let $G$ be $\g$-admissible.
For every $m \ge 1$ there exists $E \subset \R^n \setminus H$ with $|E| = m$ such that $\ff^\lambda(E) \le c m$ for some $c$ depending on $n$, $\lambda$, $g$ and $G$. \end{corollary}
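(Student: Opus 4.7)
The plan is to construct the competitor $E$ as the disjoint union of $k \sim m$ horizontally translated copies of a single ``unit-size'' bubble sitting on the floor $\partial H$, placed far apart from each other. The infinitesimality of $g$ is precisely what lets one beat the $O(k^2)$ pairs of Riesz cross-interactions against $k$ copies of a single bubble energy; without it, this strategy would fail.

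Concretely, I would fix $v_0 := \min\{1,|B^\lambda|\}/2$, set $k := \lceil m/v_0\rceil$ and $v := m/k$, so that $v \le v_0 \le |B^\lambda|$ and $k \le 2m/v_0$ (using $m \ge 1 \ge v_0$). Since $v \le |B^\lambda|$, the bubble $B^\lambda(v)$ has diameter bounded by a constant $D = D(\lambda)$. Since $g$ is infinitesimal, there is $R = R(m,g) > 0$ such that $g(z) \le 1/m^2$ for all $|z| \ge 2R$. I would then pick points $x_1,\dots,x_k \in \{x_n = 0\}$ on an $(n-1)$-dimensional lattice with mutual distance at least $2R + 2D$, and set
\begin{equation*}
E := \bigcup_{i=1}^{k} B^\lambda(v, x_i), \qquad |E| = kv = m.
\end{equation*}

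The three pieces of $\ff^\lambda(E)$ would then be estimated separately. The perimeter equals $P_\lambda(E) = k\,n|B^\lambda|^{1/n} v^{(n-1)/n} \le C(n,\lambda)\,k \le C'(n,\lambda)\,m$, using $v\le v_0$ and $k\le 2m/v_0$. Both $\g$ and the self-part of $\rr$ are translation-invariant under horizontal shifts, so \cref{lemma2.3:np} (applicable because $v \le |B^\lambda|$) yields
\begin{equation*}
\g(E) = \sum_{i=1}^{k} \g(B^\lambda(v, x_i)) \le c\, k v = c\,m, \qquad \sum_{i=1}^{k} \rr(B^\lambda(v, x_i)) \le c\,kv = c\,m.
\end{equation*}
For the cross-terms of the Riesz energy, the chosen spacing forces $|y - x| \ge 2R$ whenever $x \in B^\lambda(v, x_i)$ and $y \in B^\lambda(v, x_j)$ with $i \neq j$, hence
\begin{equation*}
\sum_{i \neq j} \rr(B^\lambda(v, x_i), B^\lambda(v, x_j)) \le k^2 v^2 \sup_{|z| \ge 2R} g \le \frac{(kv)^2}{m^2} = 1 \le m.
\end{equation*}
Adding these bounds gives $\ff^\lambda(E) \le C(n,\lambda,g,G)\,m$, as desired.

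I do not anticipate any real obstacle: the argument is essentially quantitative bookkeeping once one commits to the ``scattered bubbles'' construction. The only mildly delicate point is selecting $R$ large enough that $\sup_{|z| \ge 2R} g \le 1/m^2$, which is possible \emph{exactly} because $g$ is infinitesimal. It is also worth noting that $\g$-admissibility (without coercivity) suffices here, because placing the bubbles on $\partial H$ keeps their vertical coordinate bounded by a constant depending only on $\lambda$, so the growth of $G$ at infinity never enters the estimate.
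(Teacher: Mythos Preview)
Your proposal is correct and follows essentially the same construction as the paper: a disjoint union of $\sim m$ small bubbles $B^\lambda(v,x_i)$ with $v\le\min\{1,|B^\lambda|\}$, placed far apart on $\{x_n=0\}$, using \cref{lemma2.3:np} for the self-terms and the infinitesimality of $g$ for the cross-terms. The only cosmetic differences are that the paper places the centers on a line $x_i=iRe_1$ and chooses the separation so that $g<1/N$ on cross-pairs, whereas you use a lattice and the threshold $1/m^2$; both choices are adequate.
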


\begin{proof}
Let us consider the set $E$ given by a collection of $N \ge 1$ spherical caps $\left\{B^\lambda(v, x_i)\right\}_{1 \le i \le N}$ of equal volume $v$ and with centers located at $x_i = i R e_1$, $i = 1, \dots, N$, with $R$ large enough so that $B^\lambda(v, x_i)$ are pairwise disjoint.
We  choose the number $N$ as the smallest integer for which the volume of each spherical cap does not exceed $\min \left\{1, |B^\lambda|\right\}$. 
In particular $Nv = m$ and $N = \left\lceil \frac{m}{\min\{1, |B^\lambda|\}} \right\rceil$.
Note that, by \cite[Lemma 3.1]{PascalePozzettaQuantitative}, since $v \le 1 \le m = |E|$, \begin{equation*} \begin{split} P_\lambda(E) = P_\lambda\left(\cup_{i=1}^N B^\lambda(v, x_i)\right) & = \sum_{i=1}^N P_\lambda(B^\lambda(v, x_i)) = c(n, \lambda) N v^{\frac{n - 1}{n}} \le c(n, \lambda) \left(\frac{m}{\min\{1, |B^\lambda|\}} + 1\right) v^{\frac{n - 1}{n}} \\ & \le c(n, \lambda) \, (m \, v^{\frac{n - 1}{n}} + v^{\frac{n - 1}{n}} )\le c(n, \lambda) \, (m\, 1^{\frac{n - 1}{n}} + m \, 1^{\frac{n - 1}{n}} ) = c(n, \lambda) m. \end{split} \end{equation*}
Moreover, let $R$ be so large that $g(x - y) < \frac{1}{N}$ for every $x \in B^\lambda(v, x_j)$, $y \in B^\lambda(v, x_k)$ with $j \neq k$.
Then, by \cref{lemma2.3:np}, since $v \le 1 \le m = |E|$, \begin{equation*} \begin{split} \rr (E) & = \int_{\bigcup_{i = 1}^N B^\lambda(v, x_i)} \int_{\bigcup_{i = 1}^N B^\lambda(v, x_i)}g(y - x) \de y \de x \\ & = \sum_{i = 1}^N \rr(B^\lambda(v, x_i)) + \sum_{i = 1}^N \int_{B^\lambda(v, x_i)} \int_{B^\lambda(v, x_1) \cup \dots \cup \widehat{B^\lambda(v, x_i)}\cup \dots \cup B^\lambda(v, x_N)} g(y - x) \de y \de x \\ & \le c(n, \lambda, g, G) N v + N \frac{1}{N} v (m - v) \le c(n, \lambda, g, G) m + (m - v) \le c(n, \lambda, g, G) m, \end{split} \end{equation*}
where the $B^\lambda(v, x_1) \cup \dots \cup \widehat{B^\lambda(v, x_i)}\cup \dots \cup B^\lambda(v, x_N)$ denotes union over all the bubbles except for $B^\lambda(v, x_i)$.
Finally, by \cref{lemma2.3:np}
\begin{equation*}
\begin{split} \ff^\lambda(E) & = P_\lambda(E) + \rr(E) + \g (E) \\ & \le c(n, \lambda) |E| + c(n, \lambda, g, G) |E| + \sum_{i = 1}^N \g(B^\lambda(v, x_i)) \\ & \le c(n, \lambda) |E| + c(n, \lambda, g, G) |E| + c(n, \lambda, g, G) N v = c(n, \lambda, g, G) m .\end{split} \end{equation*} \end{proof}

\begin{lemma}\label{lemma3.1:np} Let $E \subset \R^n \setminus H$ be a set of finite perimeter. Let $g$ be $\mathscr{R}$-admissible and $q$-growing and let $G$ be $\g$-admissible. 
If $\alpha > 1$, then \begin{equation*} \ff^\lambda(\alpha E) \le \alpha^{2n + q} \ff^\lambda(E). \end{equation*} \end{lemma}
\begin{proof} Note that, if $E \subset \R^n \setminus H$, then $\alpha E \subset \R^n \setminus H$.
Since $\alpha > 1$, by the positivity of $P_\lambda$ we get \begin{equation*} \begin{split} P_\lambda(\alpha E) & = \alpha^{n - 1} P_\lambda(E) \le \alpha^{2n + q} P_\lambda(E). \end{split} \end{equation*}
Since $g$ is $q$-growing, we have
\begin{equation*} \begin{split} \rr(\alpha E) & = \int\int_{(\alpha E)^2} g(y- x) \de y \de x \\ & = \alpha^{2n} \int\int_{E \times E} g(\alpha(y - x))\de y \de x \\ & \le \alpha^{2n + q}\int\int_{E \times E} g(y - x) \de y \de x = \alpha^{2n + q}\rr(E). \end{split} \end{equation*} 
Finally, by~\eqref{crescita:Gbis} we get
\begin{equation*} \begin{split} \int_{\alpha E} G(x_n) \de x & = \alpha^n \int_E G(\alpha x_n)\de x \le \alpha^{2n + q} \int_E G(x_n) \de x. \end{split} \end{equation*} \end{proof}

The following lemma allows to suitably localize minimizing sequences with sufficiently small volume.

\begin{lemma}\label{lemma5.1:km} Let $g$ be $\rr$-admissible and $q$-growing and let $G$ be $\g$-admissible. 
There exists $\bar m > 0$, depending on $n$, $\lambda$, $g$, $G$ and $q$, such that, for every $m \in (0, \bar m)$ and every set of finite perimeter $F \subset \R^n \setminus H$ with $|F| = m$, there exists a set of finite perimeter $L$ with \begin{equation}\label{5.1:km} \ff^\lambda(L) \le \ff^\lambda(F) \qquad {\rm and} \qquad L \subset \bar Q_1 := [-1, 1] \times\dots\times[-1, 1] \times[0, 2]. \end{equation} \end{lemma}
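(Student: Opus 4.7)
The plan is to compare $F$ with two explicit candidates for $L$: the isoperimetric bubble $B^\lambda(m)$ placed at the origin (which sits inside $\bar Q_1$ for $m$ sufficiently small) and a truncated-then-dilated modification of $F$ also confined to $\bar Q_1$. The key tools will be the isoperimetric inequality~\eqref{isoperimetric:inequality} and its quantitative version~\eqref{quantitativeisoperimetric:inequality}, the mass estimates from \cref{lemma2.3:np}, and the scaling bound in \cref{lemma3.1:np}.

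First I would bound the bubble's energy via \cref{lemma2.3:np} as $\ff^\lambda(B^\lambda(m)) \le \mu m^{(n-1)/n} + 2cm$ with $\mu := n|B^\lambda|^{1/n}$, noting that $B^\lambda(m) \subset \bar Q_{1/4}$ for $m \le |B^\lambda|/4^n$. If $\ff^\lambda(F) \ge \ff^\lambda(B^\lambda(m))$, the choice $L := B^\lambda(m)$ concludes at once. Otherwise $P_\lambda(F) < \mu m^{(n-1)/n} + 2cm$, so the deficit satisfies $D_\lambda(F) \le (2c/\mu) m^{1/n}$, and the quantitative isoperimetric inequality yields $\alpha_\lambda(F) \le c_2 m^{1/(2n)}$. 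Since horizontal translations preserve $P_\lambda$, $\rr$, and $\g$ (the last because $G$ depends only on $x_n$), I may assume that $|F \Delta B^\lambda(m)| \le c_2 m^{1+1/(2n)}$, so in particular $|F \setminus \bar Q_{1/2}| \le c_2 m^{1+1/(2n)}$ once $B^\lambda(m) \subset \bar Q_{1/2}$.

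Next I localize $F$ into $\bar Q_1$ by truncation. Applying the coarea formula to the $1$-Lipschitz function $d(x) := \max\{|x_1|, \dots, |x_{n-1}|, x_n/2\}$, whose sublevel sets are exactly the scaled cubes $\bar Q_\rho = \rho \bar Q_1$, gives
\[
\int_{1/2}^{3/4} \hh^{n-1}(F \cap \partial \bar Q_\rho)\, d\rho \;\le\; |F \cap (\bar Q_{3/4} \setminus \bar Q_{1/2})| \;\le\; c_2\, m^{1+1/(2n)}.
\]
Selecting $\rho^* \in [1/2, 3/4]$ where the integrand lies below its mean, I set $L_0 := F \cap \bar Q_{\rho^*}$, $\beta := (m/|L_0|)^{1/n}$, and $L := \beta L_0$, so that $|L| = m$ and $L \subset \bar Q_{\beta \rho^*} \subset \bar Q_1$ for $m$ small (since $\beta \le 1 + C m^{1/(2n)}$). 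By \cref{lemma3.1:np}, $\ff^\lambda(L) \le \beta^{2n+q} \ff^\lambda(L_0)$; the inclusion $L_0 \subset F$ yields $\rr(L_0) \le \rr(F)$ and $\g(L_0) \le \g(F)$; and the standard decomposition of the capillarity perimeter gives
\[
P_\lambda(L_0) \le P_\lambda(F) - P_\lambda\bigl(F,\, \R^n \setminus \bar Q_{\rho^*}\bigr) + (1+|\lambda|)\,\hh^{n-1}(F \cap \partial \bar Q_{\rho^*}).
\]

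The main obstacle, and the most delicate part of the argument, is then verifying that the net change in $\ff^\lambda$ is non-positive. The crucial ingredient is to apply~\eqref{isoperimetric:inequality} to the tail $F'' := F \setminus \bar Q_{\rho^*}$ so as to obtain the saving $P_\lambda\bigl(F, \R^n \setminus \bar Q_{\rho^*}\bigr) \ge \mu |F''|^{(n-1)/n} - C\,\hh^{n-1}(F \cap \partial \bar Q_{\rho^*})$, which has to be compared with the truncation overhead (bounded by the $(n-1)$-measure of $F \cap \partial \bar Q_{\rho^*}$, itself controlled by $|F''|$) and with the dilation error $(\beta^{2n+q}-1)\,\ff^\lambda(L_0) \lesssim (|F''|/m)\,\ff^\lambda(L_0)$. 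The decisive algebraic point is that $|F''|^{(n-1)/n}/|F''| = |F''|^{-1/n} \to \infty$ as $|F''| \to 0$ and that $|F''|^{(n-1)/n}/(|F''|\, m^{-1/n}) = (m/|F''|)^{1/n} \to \infty$ as $m \to 0$ uniformly over $|F''| \in (0, c_2 m^{1+1/(2n)}]$, so the isoperimetric saving dominates both error terms once $\bar m = \bar m(n, \lambda, g, G, q)$ is chosen small enough, yielding $\ff^\lambda(L) \le \ff^\lambda(F)$.
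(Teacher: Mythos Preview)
Your overall strategy---compare with $B^\lambda(m)$, apply the quantitative isoperimetric inequality, then truncate and dilate---mirrors the paper's. The gap is in your control of the cut cost. The averaging argument yields only
\[
\hh^{n-1}(F \cap \partial \bar Q_{\rho^*}) \;\le\; 4\, |F \cap (\bar Q_{3/4} \setminus \bar Q_{1/2})| \;\le\; 4 c_2\, m^{1+1/(2n)},
\]
which bounds the cut by $|F \setminus \bar Q_{1/2}|$, \emph{not} by $|F''| = |F \setminus \bar Q_{\rho^*}|$ as you claim. These quantities need not be comparable: most of the tail mass could sit in the shell $\bar Q_{\rho^*} \setminus \bar Q_{1/2}$, leaving $|F''|$ arbitrarily small while the cut is still of order $m^{1+1/(2n)}$. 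In that regime the isoperimetric saving $\mu |F''|^{(n-1)/n}$ cannot absorb the cut cost $2\sigma$, and your final inequality $\ff^\lambda(L)\le\ff^\lambda(F)$ fails. Concretely, the required bound $m^{1+1/(2n)} \lesssim |F''|^{(n-1)/n}$ forces $|F''|\gtrsim m^{(2n+1)/(2(n-1))}$, which is not guaranteed.

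The paper handles exactly this obstruction by a dichotomy on $\Sigma(\rho) := P_\lambda(F_1) + P_\lambda(F_2) - P_\lambda(F)$ versus $\tfrac12 \ff^\lambda(F_2)$, with $F_1=F\cap B_\rho(0)$ and $F_2=F\setminus B_\rho(0)$. If $\Sigma > \tfrac12 \ff^\lambda(F_2)$ for all $\rho$ in $(\rho_m, R_\lambda/2)$, then $-U'(\rho) \gtrsim U(\rho)^{(n-1)/n}$ with $U(\rho) = |F \setminus B_\rho|$, and ODE comparison forces $U$ to vanish before $\rho$ reaches $R_\lambda/2$, so $L = F$ already works. Otherwise there is some radius $\rho_0$ where the cut cost $\Sigma$ is small \emph{relative to the tail energy $\ff^\lambda(F_2)$} (not merely relative to a fixed power of $m$), and at that specific radius the truncate-and-dilate competitor $\tilde F = (m/|F_1|)^{1/n} F_1$ beats $F$. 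This ODE/dichotomy mechanism is precisely what your single-average-cut shortcut is missing; to repair your argument you would need to select $\rho^*$ so that $\hh^{n-1}(F\cap\partial\bar Q_{\rho^*})$ is controlled by a suitable power of $|F\setminus\bar Q_{\rho^*}|$ itself, which leads back to an ODE alternative of the same kind.
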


\begin{proof}
Throughout the proof we will assume that $\bar m < \frac{|B^\lambda|}{4^n}$.
If $\ff^\lambda(B^\lambda(m)) \le \ff^\lambda(F)$, then the assertion of the lemma is proved by choosing $L = B^\lambda(m)$.
Then we can assume, by \cref{lemma2.3:np}, that \begin{equation}\label{4.5:km} \ff^\lambda(F) < \ff^\lambda(B^\lambda(m)) \le c(n, \lambda, g, G) \max\left\{m, m^{\frac{n - 1}{n}}\right\}. \end{equation}
By \cref{lemma2.3:np} \begin{equation}\label{5.2:km} \begin{split} D_\lambda(F) & = \frac{c(n, \lambda)}{m^{\frac{n - 1}{n}}}(P_\lambda(F) - P_\lambda(B^\lambda(m))) \\ & \le \frac{c(n, \lambda)}{m^{\frac{n - 1}{n}}} ([\rr(B^\lambda(m)) - \rr(F)] + [\g(B^\lambda(m)) - \g(F)]) \\ & \le \frac{c(n, \lambda)}{m^{\frac{n - 1}{n}}} (\rr(B^\lambda(m)) + \g(B^\lambda(m))) \\ & \le c(n, \lambda, g, G)\,\left(\frac{m}{|B^\lambda|}\right)^{\frac{1}{n}}. \end{split} \end{equation} 
By the quantitative isoperimetric inequality~\eqref{quantitativeisoperimetric:inequality} \begin{equation}\label{5.2:kmbis} \alpha_\lambda(F) \le c(n, \lambda) \sqrt{D_\lambda(F)} \le c(n, \lambda, g, G) \left(\frac{m}{|B^\lambda|}\right)^{\frac{1}{2n}} \end{equation} and, after a suitable translation, \begin{equation*} |B^\lambda(m) \Delta F| \le c(n, \lambda, g, G) \, \left(\frac{m}{|B^\lambda|}\right)^{1 + \frac{1}{2n}}. \end{equation*}
Since $|F| = m = |B^\lambda(m)|$ we also have \begin{equation*} |B^\lambda(m) \Delta F| = 2 |F\setminus B^\lambda(m)| \end{equation*} and \begin{equation}\label{5.3:km} |F \setminus B^\lambda(m)| \le c(n, \lambda, g, G) \left(\frac{m}{|B^\lambda|}\right)^{1 + \frac{1}{2n}}. \end{equation}
For any $\rho > 0$ let $F_1 = F \cap B_\rho(0)$ and $F_2 = F \setminus B_\rho(0)$.
Note that for every $\epsilon > 0$ there exists $\bar m$ sufficiently small such that, if $\rho \ge \frac{m^{\frac{1}{n}}}{|B^\lambda|^{\frac{1}{n}}} R_\lambda =: \rho_m$, with $m < \bar m$, then
\begin{equation}\label{4.4:km} |F_2| \le \epsilon |F_1|.
\end{equation} 
Indeed, since $B^\lambda(|B^\lambda|) \subset B_{R_\lambda}(0)$, we get $B^\lambda(m) \subset B_\rho(0)$.
Moreover, by~\eqref{5.3:km} and for sufficiently small $\bar m$ we estimate
\begin{equation*} \begin{split} |F_1| = |F \cap B_\rho(0)| &\ge |F \cap B^\lambda(m)| \\ &= |F| - |F \setminus B^\lambda(m)| \\ &\ge |B^\lambda| \left(\frac{m}{|B^\lambda|}\right) - c(n, \lambda, g, G) \left(\frac{m}{|B^\lambda|}\right)^{1 + \frac{1}{2n}} \\ & 
\ge \frac{|B^\lambda|}{2} \left(\frac{m}{|B^\lambda|}\right).
\end{split} \end{equation*}
and \begin{equation*} |F \setminus B_\rho(0)| \le |F \setminus B^\lambda(m)| \le c(n, \lambda, g, G) \left(\frac{m}{|B^\lambda|}\right)^{1 + \frac{1}{2n}} \le c(n, \lambda, g, G) \left(\frac{m}{|B^\lambda|}\right)^{\frac{1}{2n}}|F_1| \le \epsilon |F_1|.
\end{equation*}
Let us define the monotonically decreasing function $U(\rho) = |F \setminus B_\rho(0)|$.\\
We now distinguish two cases.
Let us firstly prove~\eqref{5.1:km} when we assume that \begin{equation}\label{5.4:km} \Sigma := P_\lambda(F_1) + P_\lambda(F_2) - P_\lambda(F) > \frac{1}{2} \ff^\lambda(F_2) \qquad \forall \rho \in \left(\rho_m, \frac{R_\lambda}{2}\right) \end{equation}
By~\eqref{5.3:km} we have \begin{equation*} U(\rho_m) = |F \setminus B^\lambda(m)| \le c(n, \lambda, g, G) \, m^{1 + \frac{1}{2n}} \le c(n, \lambda, g, G) \rho_m^{n + \frac{1}{2}}  \end{equation*}
Furthermore, by~\eqref{isoperimetric:inequality} and~\eqref{5.4:km} we have \begin{equation*}\label{5.5:km} \begin{split} -2\frac{\d U(\rho)}{\d\rho} & = \Sigma > \frac{1}{2}\ff^\lambda(F_2) \ge \frac{1}{2} P_\lambda(F \setminus B_\rho(0)) \ge c(n, \lambda) U^{\frac{n - 1}{n}}(\rho). \end{split} \end{equation*} 
In particular we have \begin{equation*} \begin{cases} \quad \frac{\d U(\rho)}{\d\rho} \le - c(n, \lambda) U^{\frac{n - 1}{n}}(\rho) \qquad & {\rm for}\,{\rm a.e.} \; \rho \in \left(\rho_m, \frac{R_\lambda}{2}\right) \\ \quad U(\rho_m) \le c(n, \lambda, g, G) \rho_m^{n + \frac{1}{2}}. \qquad & \end{cases} \end{equation*}
By ODE comparison we deduce that, if $\bar m < 1$, \begin{equation*} \begin{split} U(\rho)^{\frac{1}{n}} & \le U(\rho_m)^{\frac{1}{n}} - c(n, \lambda) \, (\rho - \rho_m) \\ & \le c(n, \lambda, g, G)\, \rho_m^{1 + \frac{1}{2n}} - c(n, \lambda) \rho + c(n, \lambda) \rho_m \\ & = c(n, \lambda, g, G) \, m^{\frac{1}{n} + \frac{1}{2n^2}} - c(n, \lambda)\, \rho + c(n, \lambda)\, m^{\frac{1}{n}} \\ & \le c(n, \lambda, g, G) \, m^{\frac{1}{n}} - c(n, \lambda)\, \rho + c(n, \lambda)\, m^{\frac{1}{n}} \\ & = c(n, \lambda, g, G) \, m^{\frac{1}{n}} - c(n, \lambda)\, \rho. \end{split} \end{equation*}
For $\bar m$ sufficiently small, it follows that $U(\rho) = 0$ for $\rho \ge \frac{R_\lambda}{2}$, and we obtain~\eqref{5.1:km} with $L = F$. 
\\ Let us prove~\eqref{5.1:km} assuming that \begin{equation}\label{4.3:km} \Sigma \le \frac{1}{2} \ff^\lambda(F_2) \end{equation} holds for some $\rho_0 \in \left(\rho_m, \frac{R_\lambda}{2}\right)$.
Let $m_1 := |F_1|$, $m_2 := |F_2|$ and $\gamma := \frac{m_2}{m_1} \le \epsilon$, with $\epsilon$ that will be chosen suitably small later.
Let us also denote $\tilde F = l \, F_1$, with $l := (1 + \gamma)^{\frac{1}{n}}$.
In particular $|\tilde F| = m$ and, if $\epsilon$ is sufficiently small, \begin{equation*} \tilde F = (1 + \gamma)^{\frac{1}{n}} F_1 = (1 + \gamma)^{\frac{1}{n}} \left(F \cap B_{\rho_0}(0)\right) \subset B_{\rho_0\sqrt[n]{1 + \gamma}}(0) \subset B_{R_\lambda}(0) \subset \bar Q_1. \end{equation*}
By \cref{lemma3.1:np} \begin{equation}\label{4.6:km} \begin{split} \ff^\lambda(\tilde F) = \ff^\lambda(l F_1) & \le l^{2n + q} \ff^\lambda(F_1) \\ & = \ff^\lambda(F_1) + \left(l^{2n + q} - 1\right) \ff^\lambda(F_1). \end{split} \end{equation}
Choosing $\epsilon \le 1$, we have $1 \le l \le 2^{\frac{1}{n}}$, and by Taylor's formula we obtain $l^{2n + q} - 1 = (1+\gamma)^{2+q/n}-1 \le \gamma K$ for some $K > 0$ independent of $\gamma$ and for $\epsilon$ sufficiently small.
By~\eqref{4.6:km} we arrive at \begin{equation*} \ff^\lambda(\tilde F) - \ff^\lambda(F_1) \le \gamma K \ff^\lambda(F_1). \end{equation*}
By the definition of $\Sigma$ and since $\rr(F_1) + \rr(F_2) \le \rr(F)$ \begin{equation}\label{4.7:km} \begin{split} \ff^\lambda(\tilde F) - \ff^\lambda(F) & \le \rr(F_1) + \g(F_1) + \rr(F_2) + \g(F_2) - \rr(F) - \g(F) + \Sigma - \ff^\lambda(F_2) + \gamma K \ff^\lambda(F_1) \\ & \le  - \frac{1}{2} \ff^\lambda(F_2) + \gamma K \ff^\lambda(F_1). \end{split} \end{equation}
By positivity of $\rr$ and $\g$ and the isoperimetric inequality~\eqref{isoperimetric:inequality}, we have $\ff^\lambda(F_2) > P_\lambda(F_2) \ge c(n, \lambda) m_2^{\frac{n - 1}{n}}$.
By~\eqref{4.3:km} we obtain \begin{equation}\label{zz:eqStimaEnergia} \begin{split} \ff^\lambda(F) - \ff^\lambda(F_1) & = P_\lambda(F) + \rr(F) + \g(F) - P_\lambda(F_1) - \rr(F_1) - \g(F_1) - P_\lambda(F_2) + P_\lambda(F_2) \\ & \ge - \frac{1}{2} \ff^\lambda(F_2) + \rr(F) + \g(F) - \rr(F_1) - \g(F_1) + P_\lambda(F_2) \\ & = - \frac{1}{2} P_\lambda(F_2) - \frac{1}{2} \rr(F_2) - \frac{1}{2} \g(F_2) + \rr(F) + \g(F) - \rr(F_1) - \g(F_1) + P_\lambda(F_2) \ge 0, \end{split} \end{equation} that is $\ff^\lambda(F_1) \le \ff^\lambda(F)$.
By~\eqref{4.5:km}, since $\gamma m \le 2 m_2$ and $\gamma \le \epsilon$,~\eqref{4.7:km} turns into \begin{equation*}\label{4.8:km} \begin{split} \ff^\lambda(\tilde F) - \ff^\lambda(F) & \le - c(n, \lambda) m_2^{\frac{n - 1}{n}} + \gamma K \ff^\lambda(F) \\ & \le - c(n, \lambda) m_2^{\frac{n - 1}{n}} + C(n, \lambda, g, G, q) \max\left\{m_2, \epsilon^{\frac{1}{n}} m_2^{\frac{n - 1}{n}}\right\}. \end{split} \end{equation*}
Since $m_2 \le c(n, \lambda) \epsilon$ by~\eqref{4.4:km}, for $\epsilon$ sufficiently small~\eqref{5.1:km} follows with $L = \tilde F$. \end{proof}

\begin{remark}\label{remark:quantitative} Arguing exactly as in~\eqref{4.5:km}, \eqref{5.2:km} and~\eqref{5.2:kmbis}, we easily deduce that if $E$ is a minimizer of $\ff^\lambda$ in $\mathcal{A}_m$ with $m$ sufficiently small then \begin{equation*} \alpha_\lambda(E) \le c(n, \lambda, g, G) m^{\frac{1}{2n}}. \end{equation*} \end{remark}

Now we are ready to prove \cref{simple:existence}.
\begin{proof}[Proof of \cref{simple:existence}] By \cref{lemma5.1:km} there exists a minimizing sequence with uniformly bounded sets. 
The lower semicontinuity of $P_\lambda$ \cite[Lemma 3.7]{PascalePozzettaQuantitative} and the continuity of $\rr$ and $\g$ under strong $L^1$ convergence (which holds by dominated convergence theorem and uniformly boundedness of the minimizing sequence) allow to conclude the proof. \end{proof}

The following proposition states that, if the nonlocal kernel $g$ and the gravitational term $G$ are coercive, we have existence of minimizers for \emph{all} values of $m$.
\begin{proposition}\label{prop:existence} Let $g$ be a $\rr$-admissible coercive function and let $G$ be a $\g$-admissible coercive function.
Then, for every $m > 0$, there exists a minimizer of $\ff^\lambda$ in the class \begin{equation*} \mathcal{A}_m := \{\Omega \subset \R^n \setminus H \,{\rm measurable} \st |\Omega| = m\}. \end{equation*} \end{proposition}
\begin{proof} Let us consider a minimizing sequence $\left\{E_i\right\}_{i \in \N}$ of $\ff^\lambda$ in $\mathcal{A}_m$.
In particular \begin{equation*} \sup_{i \in \N}\ff^\lambda(E_i) = C < + \infty. \end{equation*} 
By \cite[Corollary 2.5]{PascalePozzettaQuantitative} it holds \begin{equation*} P(E_i) \le \frac{2}{1 - \lambda}P_\lambda(E_i) \le \frac{2}{1 - \lambda}\ff^\lambda(E_i) \le C, \qquad \forall i \in \N. \end{equation*}
By \cite[Lemma 2.10]{AntonelliNardulliPozzetta} and \cite[Corollary 3.25]{LeonardiRitoreVernadakis} we get the existence of a constant $\bar c = \bar c\left(m, n, \sup_i P(E_i)\right)$ $> 0$ such that for every $i \in \N$ there exists $x_i \in \left\{x_n > 0\right\}$ with \begin{equation*} \left|E_i \cap B_1(x_i)\right| \ge \bar c. \end{equation*}
By the coercivity of $G$ we have that $\left(x_i\right)_n$ is uniformly bounded.
Indeed, if by contradiction $\left(x_i\right)_n \to + \infty$, then \begin{equation*} C \ge \int_{E_i} G(x_n) \de x \ge \int_{E_i \cap B_1(x_i)} G(x_n) \de x \ge \bar c \left(\inf_{B_1(x_i)} G(x_n)\right) \to + \infty. \end{equation*}
By the lower semicontinuity of $\ff^\lambda$ under strong $L^1$ convergence, the existence of a minimizer follows if we show that for every $\epsilon > 0$ and $i \in \N$ there exists $R > 0$ with \begin{equation}\label{existence:coercive} \sup_{i \in \N} \left|E_i \setminus B_R(x_i)\right| < \epsilon. \end{equation}
In order to prove~\eqref{existence:coercive}, note that \begin{equation*} \begin{split} C \ge \rr(E_i) & \ge \int_{E_i}\int_{E_i \setminus B_R(x)} g(y - x) \de y \de x \ge \left(\inf_{|x| > R} g(x)\right) \int_{E_i} \left|E_i \setminus B_R(x)\right| \de x \\ & \ge \left(\inf_{|x| > R} g(x)\right) \int_{E_i \cap B_1(x_i)} \left|E_i \setminus B_R(x)\right| \de x \ge \bar c \left(\inf_{|x| > R} g(x)\right) \left|E_i \setminus B_{R + 1}(x_i)\right| \end{split} \end{equation*} and \begin{equation*} \left|E_i \setminus B_{R + 1}(x_i)\right| \le \frac{C}{\bar c} \left(\inf_{|x| > R} g(x)\right)^{-1} \to 0 \qquad {\rm as} \quad R \to + \infty. \end{equation*} \end{proof}

\subsection{Boundedness and indecomposability of minimizers}\label{subsec:3.2}

In this section we will prove two qualitative properties of volume constrained minimizers of $\ff^\lambda$, namely boundedness and indecomposability.
We begin with the following

\begin{theorem}\label{teo:limitato} Let $g$ be $\rr$-admissible, infinitesimal and symmetric and let $G$ be $\g$-admissible.
Let $E \subset \R^n \setminus H$ be a minimizer of $\ff^\lambda$ with $|E| = m$, $m > 0$.
Then $E$ is essentially bounded. \end{theorem}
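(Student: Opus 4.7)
The plan is to combine the quasiminimality of volume-constrained minimizers of $\ff^\lambda$ with the standard packing argument based on density estimates, treating the vertical direction separately because of the lack of symmetry due to the half-space constraint and the gravitational potential.

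As a first step, I would invoke \cref{lemma:quasiminimal} to deduce that $E$ is a $(K,r_0)$-quasiminimal set in the sense of \cref{quasiminimal:set}, for some constants $K,r_0>0$ depending on $n,\lambda,g,G$ and $m$. The assumptions that $g$ is $\rr$-admissible, infinitesimal and symmetric are used exactly to ensure that, when $F$ is a local modification of $E$ inside a ball of radius $r<r_0$, the variation of $\rr(E)$ can be estimated by a constant times $|E\triangle F|$; the analogous estimate for $\g(E)$ follows from the $\g$-admissibility of $G$ together with the local boundedness encoded in~\eqref{sup:Gbis}. Any comparison between $\ff^\lambda(E)$ and $\ff^\lambda(F)$ then transfers to a comparison of capillarity perimeters, up to a prescribed $r^n$-error term.

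I would then apply \cref{density:quasiminimal} to obtain constants $c_0>0$ and $r_1\in(0,r_0)$ such that
\begin{equation*}
|E\cap B_r(x)| \ge c_0\, r^n \qquad \text{for every } x\in \overline{E^{(1)}\cap\{x_n>0\}} \text{ and every } r\in(0,r_1),
\end{equation*}
which is the only point where the specific structure of the capillarity functional enters. At this point the argument splits. For the horizontal components, assume by contradiction that the essential projection of $E$ onto $\R^{n-1}\times\{0\}$ is unbounded. Then one can extract a sequence $(x_k)_{k\in\N}\subset \overline{E^{(1)}}$ whose horizontal components diverge sufficiently fast that the balls $B_{r_1/2}(x_k)$ are pairwise disjoint; each of them carries mass at least $c_0(r_1/2)^n$, and summing over $k$ contradicts $|E|=m<\infty$.

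For the vertical direction, the packing argument above cannot be applied verbatim, since $G$ is not assumed to be coercive and vertical translations do not preserve $\ff^\lambda$. I would therefore cite the dedicated \cref{vertical:boundedness}, which provides an explicit bound on $\esssup_{x\in E} x_n$ in terms of $n,\lambda,g,G$ and $m$ by exploiting comparison with suitably translated competitors. Combining the horizontal and vertical bounds yields the essential boundedness of $E$. The main obstacle in carrying out this scheme lies in the quasiminimality step, where one must control the genuinely global nonlocal term by a purely local quantity, and in the treatment of the vertical direction, which cannot exploit any symmetry and in which coercivity of $G$ is not available.
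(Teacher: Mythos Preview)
Your plan matches the paper's: quasiminimality via \cref{lemma:quasiminimal}, density estimates via \cref{density:quasiminimal}, then a packing argument. However, you have the logical dependence inverted. In the paper, \cref{vertical:boundedness} is an \emph{input} to \cref{lemma:quasiminimal}, not a separate finishing step: controlling $\g(\tilde F)-\g(E)$ by $c\,|E\Delta F|$ with a constant independent of the center $x$ of the competitor ball requires a uniform bound on $G$ over the modification region, and \eqref{sup:Gbis} only bounds $G$ on $(0,2)$, while by \cref{global:growthbis} the bound grows like $t^n$ for larger $t$. Knowing in advance that $E\subset\{0<x_n<\bar x_n\}$ forces every nontrivial competitor ball to sit at height at most $\bar x_n+r_0$, and only then is $\sup G$ uniformly controlled there (see \eqref{quasiminimal:4}). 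Once quasiminimality is established in this order, the density estimate from \cref{density:quasiminimal} holds at every point of $\overline{\partial E\setminus H}$, and the packing argument yields boundedness in \emph{all} directions at once; the paper's proof of \cref{teo:limitato} is two lines and does not split off the vertical direction. So your separate appeal to \cref{vertical:boundedness} at the end is redundant, while your justification of the $\g$-estimate inside the quasiminimality step is incomplete without it.
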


\begin{remark} We remark that \cref{teo:limitato} proves boundedness of minimizers without requiring growing properties of the Riesz-type kernel, but only infinitesimality and symmetry. \end{remark}

Before giving the proof, we recall the definition and some properties of the so-called $(K, r_0)$-quasiminimal sets.
\begin{definition}\label{quasiminimal:set} Let $E \subset \R^n \setminus H$ be a set of finite perimeter with finite measure, and let $K \ge 1$, $r_0 > 0$.
We say that $E$ is a $(K, r_0)$-quasiminimal set (relatively in $\R^n \setminus H$) if \begin{equation*} P(E, \R^n \setminus H) \le K P(F, \R^n \setminus H), \end{equation*} for any $F \subset \R^n \setminus H$ such that $E \Delta F \subset\subset B_r(x)$, for some ball $B_r(x) \subset \R^n$ with $r \le r_0$ and $x \in \{x_n \ge 0\}$. \end{definition}
\begin{theorem}\label{density:quasiminimal} Let $E \subset \R^n \setminus H$ be a $(K, r_0)$-quasiminimal set, for some $K \ge 1$, $r_0 > 0$. 
Then there exist $c = c(n, K, r_0) \in \left(0, \frac{1}{2}\right]$ and $r'_0 = r'_0(n, K, r_0) \in (0, r_0]$ such that \begin{equation*} c \le \frac{|E \cap B_r(x)|}{|B_r(x) \setminus H|} \le 1 - c \qquad \forall x \in \overline{\partial E \setminus H}, \; \forall r \in (0, r'_0]. \end{equation*}
In particular the set $E^{(1)}$ of points of density $1$ for $E$ is an open representative for $E$. \end{theorem}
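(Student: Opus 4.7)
The strategy is the classical De~Giorgi density--estimate argument for quasiminimizers, adapted to the half-space setting. Fix $x \in \overline{\partial E \setminus H}$ and set $V(r) := |E \cap B_r(x)|$; the goal is to prove both $V(r) \ge c\,|B_r(x) \setminus H|$ and $|B_r(x) \setminus H| - V(r) \ge c\,|B_r(x) \setminus H|$ for all $r \in (0, r_0']$, with $r_0' \in (0, r_0]$ and $c \in (0, 1/2]$ depending only on $n$, $K$, $r_0$.

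For the lower bound, I test the quasiminimality of $E$ with the competitor $F := E \setminus B_r(x)$, which satisfies $E \Delta F \subset\subset B_{r+\epsilon}(x)$ for every $\epsilon > 0$ and is therefore admissible as soon as $r + \epsilon \le r_0$. For a.e.\ $r \in (0, r_0)$ the standard slicing formulas give
\[
P(F, \R^n \setminus H) = P(E, (\R^n \setminus H) \setminus \overline{B_r(x)}) + \hh^{n-1}(E^{(1)} \cap \partial B_r(x) \setminus H),
\]
\[
P(E \cap B_r(x), \R^n \setminus H) = P(E, B_r(x) \setminus H) + \hh^{n-1}(E^{(1)} \cap \partial B_r(x) \setminus H).
\]
Combining these with the quasiminimality inequality $P(E, \R^n \setminus H) \le K\,P(F, \R^n \setminus H)$ and rearranging yields a local control of the form $P(E, B_r(x) \setminus H) \le C_K\, V'(r)$, where $V'(r) = \hh^{n-1}(E \cap \partial B_r(x))$. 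Inserting this into the relative isoperimetric inequality in the convex domain $B_r(x) \setminus H$ applied to $E \cap B_r(x)$ produces the ODE
\[
V(r)^{(n-1)/n} \le C\, V'(r) \qquad \text{for a.e. } r \in (0, r_0'].
\]
Since $x \in \overline{\partial E \setminus H}$ forces $V(r) > 0$ for all $r > 0$, integrating from $0$ gives $V(r) \ge c\, r^n$, whence the lower density bound.

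The upper bound is obtained symmetrically using the competitor $F := E \cup (B_r(x) \setminus H)$, whose relative perimeter involves the dual jump term $\hh^{n-1}(E^{(0)} \cap \partial B_r(x) \setminus H)$, and applying the isoperimetric inequality to the complement $(B_r(x) \setminus H) \setminus E$. Once both bounds are established, the openness of $E^{(1)}$ is a direct corollary: a point $y \in E^{(1)}$ cannot lie in $\overline{\partial E \setminus H}$ (else the upper bound would yield $|E \cap B_r(y)| \le (1-c)|B_r(y) \setminus H|$ for arbitrarily small $r$, contradicting density $1$); hence some $B_\rho(y)$ meets $\partial E$ only inside $H$, and combined with the density-$1$ property this forces $B_\rho(y) \setminus H \subset E^{(1)}$ up to null sets.

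The chief technical delicacy is the uniformity of the relative isoperimetric constant across the family $\{B_r(x) \setminus H : x \in \{x_n \ge 0\}\}$: the domain is a full ball when $x_n \ge r$, a half-ball when $x_n = 0$, and a spherical cap in between. Each of these is convex with uniformly controlled Lipschitz shape after rescaling to unit radius, so the relative isoperimetric constant is bounded below by a quantity depending only on $n$; this is what ultimately guarantees that the constants $c$ and $r_0'$ in the statement depend only on $n$, $K$, $r_0$ rather than on $x$ or $E$.
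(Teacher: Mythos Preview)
The paper does not give a self-contained proof of this statement; it simply records that the result follows from Theorem~4.2 of the Shanmugalingam reference (applied on domains $\{x_n \ge 0\} \cap B_{r_0}(x)$) or, alternatively, by adapting Maggi's Theorem~21.11. Your sketch follows the latter route, and the overall architecture --- compare $E$ with $E \setminus B_r$ and with $E \cup (B_r \setminus H)$, feed the resulting perimeter control into a relative isoperimetric inequality, integrate the ODE for $V$ --- is the correct one.

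There is, however, a genuine gap at the step ``rearranging yields $P(E, B_r(x) \setminus H) \le C_K\, V'(r)$''. With $F = E \setminus B_r(x)$ one has, for a.e.\ $r$,
\[
P(F, \R^n \setminus H) \;=\; P\bigl(E, (\R^n \setminus H) \setminus \overline{B_r(x)}\bigr) + V'(r),
\]
so the \emph{global} inequality $P(E, \R^n \setminus H) \le K\, P(F, \R^n \setminus H)$ of Definition~\ref{quasiminimal:set} rearranges only to
\[
P(E, B_r(x) \setminus H) \;\le\; K\, V'(r) \;+\; (K-1)\, P\bigl(E, (\R^n \setminus H) \setminus \overline{B_r(x)}\bigr).
\]
For $K > 1$ the last term is a fixed global quantity, essentially $(K-1)P(E, \R^n \setminus H)$, which is not controlled by $V'(r)$. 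The differential inequality you can actually derive is therefore $V'(r) \ge c\, V(r)^{(n-1)/n} - C_E$ with an additive constant $C_E$ depending on the total relative perimeter of $E$, and this does not integrate to a lower density bound with constants depending only on $(n, K, r_0)$. Your rearrangement would be valid verbatim under the \emph{local} quasiminimality condition $P(E, B_r(x) \setminus H) \le K\, P(F, B_r(x) \setminus H)$, or for additive $(\Lambda, r_0)$-minimizers as in Maggi's Theorem~21.11, where the exterior perimeters cancel; but that is not the hypothesis here, and you have not supplied the passage from the global inequality to a usable local one that the paper delegates to the cited metric-space result.
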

The proof of \cref{density:quasiminimal} follows, for instance, by repeatedly applying \cite[Theorem 4.2]{ShanmugalingamQuasiminimizers} with $X = \{x_n \ge 0\}$ in domains $\Omega = X \cap B_{r_0}(x)$ for $x \in X$, in the notation of \cite[Theorem 4.2]{ShanmugalingamQuasiminimizers}. 
Observe also that in \cite{ShanmugalingamQuasiminimizers}, the perimeter functional coincides with the relative perimeter in $\R^n \setminus H$, hence the definition of quasiminimal set in \cite[Definition 3.1]{ShanmugalingamQuasiminimizers} coincides with \cref{quasiminimal:set}.
Alternatively, \cref{density:quasiminimal} follows also by adapting the classical argument in the proof of \cite[Theorem 21.11]{MaggiBook} working with $(K, r_0)$-quasiminimal sets instead of $(\Lambda, r_0)$-minimizers. \par
\vspace{2mm}

The aim of the following lemmas is to prove that minimizers of $\ff^\lambda$ are $(K, r_0)$-quasiminimal sets, in order to apply \cref{density:quasiminimal}.

\begin{lemma}\label{vertical:boundedness}
Let $g$ be $\rr$-admissible, infinitesimal and symmetric and let $G$ be $\g$-admissible.
Let $E \subset \R^n \setminus H$ be a minimizer of $\ff^\lambda$ with $|E| = m$, $m > 0$.
Then there exists $\bar x_n > 0$, depending on $n$, $g$, $G$, $E$, such that \begin{equation}\label{xn:definition} |E \cap \{x_n > \bar x_n\}| = 0. \end{equation} \end{lemma}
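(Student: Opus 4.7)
I argue by contradiction: assume that $V(t) := |E \cap \{x_n > t\}| > 0$ for every $t > 0$. Since $|E| = m$, the function $V$ is decreasing with $V(t) \to 0$ as $t \to \infty$, and the strategy is to derive a superlinear differential inequality for $V$ that forces finite extinction.

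For a.e.\ $t > 0$ split $E = E_t^- \cup E_t^+$, with $E_t^- := E \cap \{x_n \le t\}$ and $E_t^+ := E \cap \{x_n > t\}$, and set $P(t) := \hh^{n-1}(E^{(1)} \cap \{x_n = t\})$; a standard slicing identity gives $P_\lambda(E_t^-) + P_\lambda(E_t^+) = P_\lambda(E) + 2 P(t)$ (the factor $2$ arises because $E_t^+$ does not touch $\partial H$), and $V'(t) = -P(t)$ a.e. As competitor I take $F = E_t^- \cup B^\lambda(V(t), x_0)$ with $x_0 \in \partial H$ and $|x_0|$ large; a possible small overlap $\eta := |E_t^- \cap B^\lambda(V(t), x_0)|$ (with $\eta \to 0$ as $|x_0| \to \infty$ since $|E_t^-| < \infty$) and the ensuing volume defect are absorbed by adding an even farther-away tiny bubble of mass $\eta$, whose total contribution vanishes in the limit. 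Horizontal translation along $\partial H$ preserves $P_\lambda$, $\g$, and the self-interaction $\rr$ of the added bubble. The crossed interaction $\rr(E_t^-, B^\lambda(V(t), x_0))$ vanishes as $|x_0| \to \infty$: this is the key technical step, proved by writing the interaction as a convolution (using symmetry of $g$), splitting $g = g\chi_{B_R} + g\chi_{B_R^c}$, and combining infinitesimality (to uniformly dominate the tail by $\|g\chi_{B_R^c}\|_\infty |E_t^-|$) with $\rr$-admissibility (yielding local integrability of $g$) and absolute continuity of the integral along the translating domain $E_t^- \cap B_R(y)$, whose measure vanishes as $|y| \to \infty$. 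Writing $\ff^\lambda(E) \le \ff^\lambda(F)$, passing to the limit and discarding the nonnegative terms $\rr(E_t^+), \g(E_t^+), \rr(E_t^-, E_t^+)$ on the left yields
\begin{equation*}
P_\lambda(E_t^+) \le P_\lambda(B^\lambda(V(t))) + \rr(B^\lambda(V(t))) + \g(B^\lambda(V(t))) + 2 P(t).
\end{equation*}

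The decisive observation is that, setting $T_0 := R_\lambda m^{1/n}/|B^\lambda|^{1/n}$ and using $V(t) \le m$, every bubble $B^\lambda(V(t), x)$ based at $x \in \partial H$ is contained in $\{x_n \le R_\lambda V(t)^{1/n}/|B^\lambda|^{1/n}\} \subset \{x_n < t\}$ whenever $t > T_0$; in particular such bubbles are disjoint from $E_t^+ \subset \{x_n > t\}$, and hence $\alpha_\lambda(E_t^+) = 2$. The quantitative capillarity isoperimetric inequality \eqref{quantitativeisoperimetric:inequality} then upgrades to
\begin{equation*}
P_\lambda(E_t^+) - P_\lambda(B^\lambda(V(t))) \ge \frac{4}{c(n,\lambda)}\, P_\lambda(B^\lambda(V(t))).
\end{equation*}
Combining this with the displayed estimate, using \cref{lemma2.3:np} to bound $\rr(B^\lambda(V(t))) + \g(B^\lambda(V(t))) \le C\, V(t)$, and absorbing the linear term into $P_\lambda(B^\lambda(V(t))) = n|B^\lambda|^{1/n} V(t)^{(n - 1)/n}$ for $V(t)$ sufficiently small (which holds for $t$ large by assumption), I arrive at $-V'(t) = P(t) \ge K\, V(t)^{(n-1)/n}$ for all $t$ large enough.

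An ODE comparison gives $\tfrac{d}{dt} V(t)^{1/n} \le -K/n$, so $V$ must vanish at some finite $\bar x_n$, contradicting the standing assumption and concluding the proof. The main technical obstacle is the vanishing of the crossed Riesz interaction $\rr(E_t^-, B^\lambda(V(t), x_0))$ along $|x_0| \to \infty$, since the slice $E_t^-$ has a priori no horizontal bound; this is handled exactly through the interplay of local integrability (from $\rr$-admissibility), symmetry and infinitesimality of $g$ described above.
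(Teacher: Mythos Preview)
Your argument is correct and takes a genuinely different route from the paper. The paper builds its competitor by a \emph{local} volume-fixing variation (via \cite[Lemma~17.21]{MaggiBook}): it truncates $E$ at height $t$ and restores the lost mass $V(t)$ by a small deformation near a fixed point $x_0\in\partial^*E\setminus H$, obtaining $P(E,\{x_n>t\})\le |V'(t)|+c(E)V(t)$; the classical Euclidean isoperimetric inequality then closes to the same ODE $-V'\ge c\,V^{(n-1)/n}$. You instead relocate the excised mass into an optimal bubble $B^\lambda(V(t),x_0)$ placed far away on $\partial H$, and close using the quantitative capillarity inequality~\eqref{quantitativeisoperimetric:inequality} via the clean observation that $\alpha_\lambda(E_t^+)=2$ once $t>T_0$. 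The paper's version is lighter (no overlap bookkeeping, no quantitative inequality), while yours stays entirely within the capillarity framework and makes transparent that the gain comes from $E_t^+$ being maximally far from optimal shape.

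One point deserves an extra sentence of care. When $E_t^-$ and $B^\lambda(V(t),x_0)$ overlap, you correctly fix the volume with a far-away tiny bubble, but you should also justify $P_\lambda\big(E_t^-\cup B^\lambda(V(t),x_0)\big)\le P_\lambda(E_t^-)+P_\lambda\big(B^\lambda(V(t))\big)+o(1)$: subadditivity handles $P(\,\cdot\,,\R^n\setminus H)$, and the possible defect in the $-\lambda\,\hh^{n-1}(\partial^*(\cdot)\cap\partial H)$ term is controlled by $\hh^{n-1}\big(\partial^*E\cap\partial H\cap B_\rho(x_0)\big)\to0$, using $\hh^{n-1}(\partial^*E\cap\partial H)<\infty$. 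As an aside, the quantitative inequality is slightly more than needed here: since $E_t^+$ does not meet $\partial H$ one has $P_\lambda(E_t^+)=P(E_t^+)\ge n\omega_n^{1/n}V(t)^{(n-1)/n}$, and $|B^\lambda|<\omega_n$ for $\lambda\in(-1,1)$ already provides the required uniform gap over $P_\lambda(B^\lambda(V(t)))=n|B^\lambda|^{1/n}V(t)^{(n-1)/n}$.
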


\begin{proof} Let us define, for every $t > 0$, \begin{equation*} E_t := E \cap \{x_n \le t\}, \qquad V(t):= \left|E \cap \{x_n > t\}\right|. \end{equation*}
Fix $x_0 \in \partial^* E$ such that $ x_0 \in \partial^* E \cap \{0 < x_n < t\}$ and $r_0 > 0$ such that $B_{r_0}(x_0) \subset\subset \{0 < x_n < t\}$ for any $t$ large enough.
By \cite[Lemma 17.21]{MaggiBook} there exist $\sigma_0$, $c_0 \in (0, \infty)$, depending on $E$, $x_0$ and $r_0$, such that for every $\sigma \in (- \sigma_0, \sigma_0)$ we can find a set of finite perimeter $F$, given by a suitable local variation of $E$, such that \begin{equation}\label{piccola:deformazione} F \Delta E \subset \subset B_{r_0}(x_0) \qquad |F| = |E| + \sigma, \qquad |P(F, B_{r_0}(x_0)) - P(E, B_{r_0}(x_0))| \le c_0|\sigma|. \end{equation}
Now consider $t_0 = t_0(E) > 0$ large enough such that $V(t_0) < \sigma_0$, and set $\sigma = V(t)$ for $t > t_0$.
Then there exists $\tilde F$ such that~\eqref{piccola:deformazione} holds.
Define also $\tilde E_{t} := \tilde F \cap \{0 < x_n \le t\}$, so that \begin{equation*} \begin{split} |\tilde E_{t}| & = |\tilde F| - |\tilde F \cap \{x_n > t\}| = |\tilde F| -V(t) = |E| + \sigma - \sigma = |E|. \end{split} \end{equation*}
Moreover by \cite[Lemma 17.9, Lemma 17.21]{MaggiBook} and properties of local variations, we get
\begin{equation*} \begin{split} & |\tilde E_{t} \Delta E_{t}| = |\tilde F \Delta E| \le c(E) \,  | |\tilde F| - |E| | = c(E) V(t),\\ & |P(\tilde E_{t}, B_{r_0}(x_0)) - P(E, B_{r_0}(x_0))| \le c(E)\, V(t). \end{split} \end{equation*}
By the minimality of $E$ \begin{equation*} P_\lambda(E) + \rr(E) + \g(E) \le P_\lambda(\tilde E_{t}) + \rr(\tilde E_{t}) + \g(\tilde E_{t}). \end{equation*}
Since $\hh^{n - 1}(\partial^*E \cap \partial H) = \hh^{n - 1}(\partial^*\tilde E_{t} \cap \partial H)$, we get \begin{equation*} \begin{split} P(E, \R^n \setminus H) + \rr(E) + \g(E) & \le P(\tilde E_{t}, \R^n \setminus H) + \rr(\tilde E_{t}) + \g(\tilde E_{t}) \\ & \le P(E_{t}, \R^n \setminus (H \cup B_{r_0}(x_0))) + P(E, B_{r_0}(x_0)) + c(E)\, V(t) + \rr(\tilde E_{t}) + \g(\tilde E_{t}) \\ & = P(E, \{x_n < t\}) + |V'(t)| + \rr(\tilde E_{t}) + \g(\tilde E_{t}) + c(E) V(t) \\ & = P(E, \R^n \setminus H) - P(E, \{x_n > t\}) + |V'(t)| + \rr(\tilde E_{t}) + \g(\tilde E_{t}) + c(E)\,V(t). \end{split} \end{equation*}
Then \begin{equation*} P(E, \{x_n > t\}) \le |V'(t)| + \rr(\tilde E_{t}) - \rr(E) + \g(\tilde E_{t}) - \g(E) + c(E)\, V(t) \end{equation*}
By Fubini theorem and symmetry of $g$
\begin{equation*} \begin{split} \rr(\tilde E_{t}) - \rr(E) & = \int_{\tilde E_{t} \setminus E}\int_{\tilde E_{t}}g(y - x) \de y \de x + \int_{\tilde E_{t} \cap E}\int_{\tilde E_{t}\setminus E} g(y - x) \de y \de x \\ & \quad - \int_{E \setminus \tilde E_{t}} \int_E g(y - x) \de y \de x - \int_{E \cap \tilde E_{t}} \int_{E \setminus \tilde E_{t}} g(y - x) \de y \de x \\ & = \int_{\tilde E_{t} \setminus E}\int_{\tilde E_{t}}g(y - x) \de y \de x + \int_{\tilde E_{t} \setminus E}\int_{\tilde E_{t}\cap E} g(x - y) \de y \de x \\ & \quad - \int_{E \setminus \tilde E_{t}} \int_E g(y - x) \de y \de x - \int_{E \setminus \tilde E_{t}} \int_{E \cap \tilde E_{t}} g(x - y) \de y \de x \end{split} \end{equation*} \begin{equation*} \begin{split} & = \int_{\tilde E_{t} \setminus E}\int_{\tilde E_{t}}g(y - x) \de y \de x + \int_{\tilde E_{t} \setminus E}\int_{\tilde E_{t}\cap E} g(y - x) \de y \de x \\ & \quad - \int_{E \setminus \tilde E_{t}} \int_E g(y - x) \de y \de x - \int_{E \setminus \tilde E_{t}} \int_{E \cap \tilde E_{t}} g(y - x) \de y \de x \\ & \le \int_{\tilde E_{t} \Delta E} \left(\int_{\tilde E_{t}} g(y - x) \de y + \int_E g(y - x) \de y\right) \de x \end{split} \end{equation*} 
Since $g$ is infinitesimal there exists $R_g > 0$ such that \begin{equation*} g(x) < 1 \qquad \forall x \st |x| > R_g. \end{equation*}
Then \begin{equation*} \begin{split} \rr(\tilde E_{t}) - \rr(E) & \le \int_{\tilde E_{t} \Delta E} \left(2 \int_{B_{R_g}(0)}g(z) \de z + \left|\tilde E_{t}\setminus B_{R_g(0)}\right| +  \left|E \setminus B_{R_g(0)}\right|\right) \de x \\ & \le 2 \int_{\tilde E_{t} \Delta E} \left(\int_{B_{R_g}(0)}g(z) \de z + |E|\right) \de x \\ & \le c(g, m) (|\tilde E_{t} \Delta E_{t}| + |E_{t} \Delta E|) \le c(g, E)V(t). \end{split} \end{equation*}
By \cref{global:growthbis} $\left(\sup_{B_{r_0}(x_0)} G\right) < \infty$ and
\begin{equation*} \begin{split} \g(\tilde E_{t}) - \g(E) & = \int_{\tilde E_{t} \setminus E} G \de x - \int_{E \setminus \tilde E_{t}} G \de x \le \int_{\tilde E_{t} \setminus E} G \de x \le \left(\sup_{B_{r_0}(x_0)} G\right) \,\,|\tilde E_t \setminus E|. \end{split} \end{equation*}
Therefore, for almost every $t$ sufficiently large,
\begin{equation}\label{eq:verticalbound} P(E, \{x_n > t\}) \le |V'(t)| + c(n, g, G, E)\,V(t). \end{equation}
Finally, if $c_{iso} = c_{iso}(n)$ is the constant in the classical isoperimetric inequality and $t$ is large enough,~\eqref{eq:verticalbound} yields \begin{equation*} \begin{split} c_{iso} V(t)^{\frac{n - 1}{n}} = c_{iso} |E \setminus E_t|^{\frac{n - 1}{n}} & \le P(E \setminus E_t) = \hh^{n - 1}(\partial^*E_t \cap \{x_n = t\}) + P(E, \{x_n > t\}) \\ & \le 2 |V'(t)| + c(n, g, G, E)\,V(t) < 2 |V'(t)| +  \frac{c_{iso}}{2} V(t)^{\frac{n - 1}{n}} \end{split} \end{equation*} and \begin{equation*} - V'(t) \ge c V(t)^{\frac{n - 1}{n}}. \end{equation*}
Therefore ODE comparison implies that $V(t)$ vanishes at some $t = \bar x_n < + \infty$. \end{proof}

\begin{lemma}\label{lemma:quasiminimal} Let $g$ be $\rr$-admissible, infinitesimal and symmetric and let $G$ be $\g$-admissible. 
Let $E \subset \R^n \setminus H$ be a minimizer of $\ff^\lambda$ with $|E| = m$, $m > 0$.
Then $E$ is a $(K, r_0)$-quasiminimal set, for suitable $K \ge 1$ and $r_0 > 0$, depending on $n$, $\lambda$, $g$, $G$, $E$. \end{lemma}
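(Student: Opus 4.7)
The plan is to establish the quasiminimality inequality by combining the minimality of $E$ with a local volume-fixing variation, carefully bounding the deficit produced by the capillarity, Riesz, and gravitational contributions. First, using \cref{vertical:boundedness}, fix $\bar x_n$ with $|E\cap\{x_n>\bar x_n\}|=0$. Next, select two points $x_0^{(1)}, x_0^{(2)}\in\partial^* E$ and radii $\rho_0^{(i)}>0$ such that the closed balls $\overline{B_{\rho_0^{(i)}}(x_0^{(i)})}\subset\{x_n>0\}$ are disjoint, and apply \cite[Lemma 17.21]{MaggiBook} at each $x_0^{(i)}$ to obtain constants $\sigma_0^{(i)}, c_0^{(i)}>0$ and, for every $|\tau|<\sigma_0^{(i)}$, a local variation $E_\tau^{(i)}$ of $E$ supported in $B_{\rho_0^{(i)}}(x_0^{(i)})$ and satisfying $|E_\tau^{(i)}|=|E|+\tau$, $|E_\tau^{(i)}\Delta E|\le c(E)|\tau|$, and $|P(E_\tau^{(i)},B_{\rho_0^{(i)}}(x_0^{(i)}))-P(E,B_{\rho_0^{(i)}}(x_0^{(i)}))|\le c_0^{(i)}|\tau|$. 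Finally, choose $r_0>0$ so small that $r_0\le 1$, $\omega_n r_0^n<\min_i\sigma_0^{(i)}$, no ball of radius $r_0$ can meet both $B_{\rho_0^{(i)}}(x_0^{(i)})$, and the error constant $C$ appearing below satisfies $Cr_0^{n-1}\le P(E,\R^n\setminus H)/2$.

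Given a competitor $F$ with $E\Delta F\subset\subset B_r(x)$, $r\le r_0$, $x\in\{x_n\ge 0\}$, distinguish two cases. If $x_n>\bar x_n+r$, then $E\cap B_r(x)$ is essentially empty and $B_r(x)\subset\{y_n>\bar x_n\}$, so $F=E\cup(F\setminus E)$ with $\overline{E^{(1)}}$ and $\overline{(F\setminus E)^{(1)}}$ vertically separated; perimeter additivity yields $P(F,\R^n\setminus H)\ge P(E,\R^n\setminus H)$ and quasiminimality holds trivially. Otherwise $x_n\le\bar x_n+r$, so $B_r(x)\cap\{y_n>0\}\subset\{0<y_n<\bar x_n+2\}$, a vertical strip on which $G$ is uniformly bounded by a constant $C(G, E)$ thanks to the polynomial growth in \cref{global:growthbis}. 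Pick an index $i$ with $B_{\rho_0^{(i)}}(x_0^{(i)})\cap B_r(x)=\emptyset$ and set $\sigma:=|F|-|E|$, so that $|\sigma|\le\omega_n r^n<\sigma_0^{(i)}$. Define $\tilde F$ by replacing $F$ with $E_{-\sigma}^{(i)}$ inside $B_{\rho_0^{(i)}}(x_0^{(i)})$; since $F=E$ there, the set $\tilde F$ satisfies $|\tilde F|=|E|$, $\hh^{n-1}(\partial^*\tilde F\cap\partial H)=\hh^{n-1}(\partial^* F\cap\partial H)$, $P(\tilde F,\R^n\setminus H)\le P(F,\R^n\setminus H)+c_0^{(i)}|\sigma|$, and $|\tilde F\Delta E|\le C(E)r^n$.

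Writing $\ff^\lambda(E)\le\ff^\lambda(\tilde F)$ and estimating the four deficit contributions gives $P(E,\R^n\setminus H)\le P(F,\R^n\setminus H)+Cr^{n-1}$: the perimeter-fixing cost is $c_0^{(i)}|\sigma|\le Cr^n$; the capillarity cost is bounded by $|\lambda|\omega_{n-1}r^{n-1}$, since $\partial^* E$ and $\partial^* F$ can differ on $\partial H$ only inside $B_r(x)\cap\partial H$; the Riesz cost $\rr(\tilde F)-\rr(E)\le C(g, m)|\tilde F\Delta E|\le Cr^n$ is handled exactly as in the Riesz computation carried out in the proof of \cref{vertical:boundedness}, using infinitesimality and symmetry of $g$ to control $\int_E g(y-\cdot)\de y$ uniformly in the horizontal position; and the gravitational cost $\g(\tilde F)-\g(E)\le C(G, E)|\tilde F\Delta E|\le Cr^n$ exploits the boundedness of $G$ on the relevant strip. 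Combining this estimate with $P(F,\R^n\setminus H)\ge P(E,\R^n\setminus H)-Cr^{n-1}\ge P(E,\R^n\setminus H)/2$ yields $P(E,\R^n\setminus H)\le 2P(F,\R^n\setminus H)$, proving that $E$ is $(2, r_0)$-quasiminimal. The hard part is the interplay between the lack of horizontal boundedness of $E$ and the polynomial unboundedness of $G$: without the separate trivial treatment of balls lying vertically above $E$, the gravitational deficit $\int_{B_r(x)\cap\{y_n>0\}}G$ would fail to be uniformly controlled in the horizontal position of $x$, and this is precisely why \cref{vertical:boundedness} enters as an essential ingredient.
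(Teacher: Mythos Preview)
Your proof is correct and follows essentially the same strategy as the paper's: volume-fixing via two disjoint local deformations (\cite[Lemma~17.21]{MaggiBook}), comparison of $E$ with the adjusted competitor, and bounding the Riesz and gravitational deficits by $C|\tilde F\Delta E|$ using infinitesimality/symmetry of $g$ and the vertical bound from \cref{vertical:boundedness}.

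There are two cosmetic differences worth noting. First, you track the capillarity term on $\partial H$ precisely (bounding the difference of traces by $|\lambda|\omega_{n-1}r^{n-1}$), whereas the paper simply sandwiches $P_\lambda$ between $(1\pm|\lambda|)P(\cdot,\R^n\setminus H)$; your route yields the cleaner constant $K=2$ while the paper's $K$ depends on $\lambda$. Second, and more substantively, you make explicit the case $x_n>\bar x_n+r$, where $E$ and $F\setminus E$ are vertically separated and perimeter additivity is immediate. The paper handles everything in one pass and, at the gravitational estimate, writes $\sup_{\tilde F\setminus E}G\le c(G,E)(\bar x_n+1)^n$ without isolating this case; your explicit split makes the bound on $G$ over $B_r(x)\cap\{y_n>0\}$ transparently valid. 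Both arguments then absorb the error terms by choosing $r_0$ small---you via $Cr_0^{n-1}\le P(E,\R^n\setminus H)/2$, the paper via the relative isoperimetric inequality to write $|F\Delta E|\le c\,r_0(P(E,\R^n\setminus H)+P(F,\R^n\setminus H))$---which are equivalent in spirit.
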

\begin{proof} Let us consider $x_1$, $x_2 \in \partial^*E \setminus H$ and $t_0 > 0$ such that we have $B_{t_0}(x_1) \cap B_{t_0}(x_2) = \emptyset$ and $B_{t_0}(x_1) \cup B_{t_0}(x_2) \subset \subset \R^n \setminus H$.
By applying \cite[Lemma 17.21]{MaggiBook} we find two positive constants $\sigma_0$ and $c_0$, depending on $E$, such that, given $|\sigma| < \sigma_0$, there exist two sets of finite perimeter $F_1$ and $F_2$ with \begin{equation}\label{maggi:21.4} E \Delta F_k \subset \subset B_{t_0}(x_k), \qquad |F_k| = |E| + \sigma, \qquad \left|P(E, B_{t_0}(x_k)) - P(F_k, B_{t_0}(x_k))\right| \le c_0 |\sigma|, \quad k \in \{1, 2\}. \end{equation}
Let $r_0 = r_0(n, \lambda, g, G, E) > 0$ to be determined later.
At the moment assume that 
\begin{equation*} r_0 < \min \left\{\frac{t_0}{2}, \frac{\sigma_0^{\frac{1}{n}}}{\omega_n}, \frac{|x_1 - x_2| - 2t_0}{2}\right\}. \end{equation*}
In particular, if a ball of radius $r_0$ intersects $B_{t_0}(x_1)$ (resp. $B_{t_0}(x_2)$), then it is disjoint from $B_{t_0}(x_2)$ (resp. from $B_{t_0}(x_1)$).
Let $F$ be such that $E \Delta F \subset\subset B_r(x) \cap (\R^n \setminus H)$, where $r < r_0$.
Then, by the definition of $r_0$, \begin{equation*} \left||E| - |F|\right| \le |E \Delta F| \le \omega_n r^n < \omega_nr_0^n \le \sigma_0 \end{equation*} and we can compensate for the volume deficit $||E| - |F||$ between $E$ and $F$ by modifying $F$ inside either $B_{t_0}(x_1)$ or $B_{t_0}(x_2)$.
Precisely, by the definition of $r_0$, we may assume without loss of generality that $B_r(x)$ does not intersect $B_{t_0}(x_1)$, set $\sigma = |E| - |F|$, and consider $F_1$ verifying~\eqref{maggi:21.4}, so that \begin{equation}\label{maggi:21.5} E \Delta F_1 \subset\subset B_{t_0}(x_1), \qquad E \Delta F \subset\subset B_r(x) \cap (\R^n \setminus H) \subset\subset \R^n \setminus \left(H \cup \overline{B_{t_0}(x_1)}\right). \end{equation}
By~\eqref{maggi:21.4} $\sigma = |F_1| - |E|$ and, if we define \begin{equation*} \tilde F = (F \cap B_r(x)) \cup (F_1 \cap B_{t_0}(x_1)) \cup (E \setminus (B_r(x) \cup B_{t_0}(x_1))), \end{equation*} then $|\tilde F| = |E|$ and $\tilde F \Delta E \subset \subset \{x_n > 0\}$.
By the minimality of $E$ \begin{equation}\label{quasiminimal:1} \begin{split} (1 - |\lambda|) P(E, \R^n \setminus H) \le P_\lambda(E) & \le P_\lambda(\tilde F) + \rr(\tilde F) - \rr(E) + \g(\tilde F) - \g(E). \end{split} \end{equation} 
By~\eqref{maggi:21.4} and~\eqref{maggi:21.5} we get \begin{equation}\label{quasiminimal:2} \begin{split} P_\lambda(\tilde F) & \le (1 + |\lambda|) P(\tilde F, \R^n \setminus H) \\ & \le (1 + |\lambda|)\left[P(\tilde F, \R^n \setminus (H \cup \overline{B_{t_0}(x_1)})) + P(\tilde F, B_{t_0}(x_1)) + P(\tilde F, \partial B_{t_0}(x_1))\right] \\ & = (1 + |\lambda|) \left[P(F, \R^n \setminus (H \cup \overline{B_{t_0}(x_1)})) + P(F_1, B_{t_0}(x_1)) + P(F, \partial B_{t_0}(x_1))\right] \\ & \le (1 + |\lambda|) \left[P(F, \R^n \setminus (H \cup B_{t_0}(x_1))) + P(E, B_{t_0}(x_1)) + c_0(E) |\sigma|\right] \\ & \le (1 + |\lambda|) \, P(F, \R^n \setminus H) + c_0(\lambda, E) |F \Delta E|. \end{split} \end{equation}
As in the proof of \cref{vertical:boundedness} one estimates
\begin{equation}\label{quasiminimal:3} \rr(\tilde F) - \rr(E) \le c(g, E)\,|F \Delta E|, \end{equation} 
and by \cref{global:growthbis},~\eqref{xn:definition} and if $t_0 < 1$ 
\begin{equation}\label{quasiminimal:4} \begin{split} \g(\tilde F) - \g(E) & \le \left(\sup_{\tilde F \setminus E}G\right) |\tilde F \setminus E| \le \left(\sup_{\tilde F \setminus E}G\right) \left(|\tilde F \Delta F| +|F \Delta E|\right) \\ & \le c(G, E) \,(\bar x_n + 1)^n |F \Delta E| = c(n, g, G, E) \, |F \Delta E|. \end{split} \end{equation}
However, by the relative isoperimetric inequality \cite{ChoeGhomiRitoreInequality, FuscoMorini} and~\eqref{maggi:21.5} \begin{equation}\label{quasiminimal:5} \begin{split} |F \Delta E| = |F \Delta E|^{\frac{1}{n}} |F \Delta E|^{\frac{n - 1}{n}} & \le c(n) |F \Delta E|^{\frac{1}{n}} P(F \Delta E, \R^n \setminus H) \\ & \le c(n) |F \Delta E|^{\frac{1}{n}} (P(F, \R^n \setminus H) + P(E, \R^n \setminus H)) \\ & \le c(n) \,r_0\, (P(F, \R^n \setminus H) + P(E, \R^n \setminus H)). \end{split} \end{equation}
Putting together~\eqref{quasiminimal:1}-\eqref{quasiminimal:5} we obtain \begin{equation*} \left((1 - |\lambda|) - c(n, \lambda, g, G, E) \, r_0\right) P(E, \R^n \setminus H) \le \left((1 + |\lambda|) + c(n, \lambda, g, G, E) \, r_0\right) P(F, \R^n \setminus H) \end{equation*}
If $r_0$ is sufficiently small, we conclude the proof. \end{proof}
By \cref{density:quasiminimal} and \cref{lemma:quasiminimal}, from now on we can identify any minimizer $E$ of $\ff^\lambda$, with $|E| = m$, $m > 0$, $g$ $\rr$-admissible infinitesimal symmetric function and $G$ $\g$-admissible function, with the open set $E^{(1)}$ of points of density $1$ for $E$. \par
\vspace{2mm}

Now we are ready to prove \cref{teo:limitato}.

\begin{proof}[Proof of \cref{teo:limitato}] By \cref{lemma:quasiminimal} and \cref{density:quasiminimal} there exist $r > 0$ and $c > 0$ such that for every $x \in \overline{\partial E\setminus H}$ we have $|E \cap B_r(x)| \ge c r^n$.
If $E$ were not bounded, one would easily get $|E| = \infty$. \end{proof}

Now we prove indecomposability of minimizers.

\begin{theorem}\label{teo:connesso}
Let $g$ be $\rr$-admissible and infinitesimal and let $G$ be $\g$-admissible.
Let $E \subset \R^n \setminus H$ be a minimizer of $\ff^\lambda$ with $|E| = m$, $m > 0$.
Then $E$ is indecomposable. \end{theorem}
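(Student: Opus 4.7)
The plan is to argue by contradiction, exploiting the invariance of $P_\lambda$ and $\g$ under horizontal translations together with the infinitesimality of $g$. Suppose $E$ were decomposable, so that $E = E_1 \cup E_2$ with $E_1 \cap E_2 = \emptyset$, $|E_i|>0$ for $i=1,2$, and $P(E)=P(E_1)+P(E_2)$; since the additive decomposition $\partial^*E = \partial^*E_1 \cup \partial^*E_2$ (mod $\hh^{n-1}$-null sets) is inherited upon intersecting with $\partial H$, this also yields $P_\lambda(E)=P_\lambda(E_1)+P_\lambda(E_2)$.

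Fix a horizontal unit vector $v\in\{x_n=0\}$ and, for large $t>0$, I would introduce the competitor $E_t:=E_1\cup(E_2+tv)$. I would first treat the case in which $E_1$ and $E_2$ are bounded, so that $E_t$ is a disjoint union for $t$ larger than the diameter of $E$, and hence $|E_t|=m$. Horizontal translations preserve $P_\lambda$ (since they preserve both the classical perimeter and the hyperplane $\partial H$) and $\g$ (since $G(x_n)$ depends only on the vertical coordinate), so $P_\lambda(E_t)=P_\lambda(E)$ and $\g(E_t)=\g(E)$. Therefore the only change in $\ff^\lambda$ comes from the cross-interaction
\[ \rr(E) - \rr(E_t) = \int_{E_1}\int_{E_2}[g(y-x)-g(y-x+tv)]\de y\de x + \int_{E_2}\int_{E_1}[g(y-x)-g(y-x-tv)]\de y\de x. \]
The two contributions $g(y-x)$ integrate to strictly positive quantities independent of $t$, since $g>0$ and $|E_i|>0$, whereas the translated kernels $g(y-x\pm tv)$ tend pointwise to $0$ as $t\to\infty$ by infinitesimality. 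On bounded $E_1,E_2$ an integrable majorant is available and dominated convergence gives $\rr(E_t)<\rr(E)$ for $t$ large, hence $\ff^\lambda(E_t)<\ff^\lambda(E)$, contradicting minimality.

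The main obstacle is that \cref{teo:connesso} does not assume $g$ symmetric, so \cref{teo:limitato} is not available and $E$ need not be bounded a priori. To overcome this, the plan is the following. First, deduce from the $\rr$-admissibility of $g$, via the change of variable $z=y-x$, that $g\in L^1_{\mathrm{loc}}(\R^n)$, and from $\rr(E)<\infty$ (a consequence of minimality compared with \cref{theorem3.4:km}) that $\int_{E_1}\int_{E_2}g(y-x)\de y\de x<\infty$. Then, by absolute continuity of the integral, truncate to bounded subsets $E_i^R:=E_i\cap B_R$ losing an arbitrarily small amount of interaction energy; apply the horizontal-translation argument to $E_1^R$ and $E_2^R$; and restore the lost volume by appending a small spherical cap $B^\lambda(\delta,x_*)$ placed very far from the rest, whose total energy cost is controlled by \cref{lemma2.3:np} and can be made smaller than the strictly positive gain $\int_{E_1^R}\int_{E_2^R}g(y-x)\de y\de x$. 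This yields the desired contradiction in full generality.
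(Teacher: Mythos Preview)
Your core idea---translate one piece horizontally to kill the strictly positive cross-interaction while keeping $P_\lambda$ and $\g$ fixed---is exactly the paper's proof. The paper simply sets $E_R:=E_1\cup(E_2+Re_1)$, asserts $|E_R|=m$, $P_\lambda(E_R)=P_\lambda(E)$ and $\g(E_R)=\g(E)$ for $R$ large, and lets the cross Riesz terms vanish by infinitesimality; it does not discuss boundedness at all, so your bounded-case argument coincides with the paper's.

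Your unbounded-case fix, however, has a gap. When truncating $E_i$ to $E_i^R=E_i\cap B_R$ you only track the loss in $\rr$, but cutting by $\partial B_R$ also changes the capillarity perimeter: new boundary is created on $\partial B_R$, so $P_\lambda(E_i^R)$ is in general not close to $P_\lambda(E_i)$. The bound from \cref{lemma2.3:np} controls only $\rr$ and $\g$ of the appended bubble $B^\lambda(\delta,x_*)$, not this perimeter excess, so the final comparison $\ff^\lambda(\text{competitor})<\ff^\lambda(E)$ is unjustified as written. The repair is short: since $\int_0^\infty\hh^{n-1}(E_i\cap\partial B_R)\,\de R=|E_i|<\infty$, one can choose radii along which $\hh^{n-1}(E_i\cap\partial B_R)\to0$; combined with $P(E_i;\R^n\setminus(H\cup B_R))\to0$ and $\hh^{n-1}(\partial^*E_i\cap\partial H\setminus B_R)\to0$ (both from $P_\lambda(E_i)<\infty$), this gives $P_\lambda(E_i^R)\to P_\lambda(E_i)$, after which the rest of your scheme goes through.
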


\begin{proof}
We argue by contradiction.
Assume that there exist two sets of finite perimeter $E_1$ and $E_2$ such that $|E_1 \cap E_2| = 0$, $E = E_1 \cup E_2$ and $P(E) = P(E_1) + P(E_2)$.
If $R > 0$ is sufficiently large, letting $e_1=(1,0,\ldots,0)$ and defining $E_R := E_1 \cup (E_2 + e_1 R)$, we have $|E_R| = m$, $P_\lambda(E_R) = P_\lambda(E)$ and $\g(E_R) = \g(E)$.
At the same time, the nonlocal energy decreases, precisely \begin{equation*} \liminf_{R \to \infty} \left(\int_{E_1}\int_{E_2 + e_1 R} g(y - x) \de y \de x + \int_{E_2 + e_1 R}\int_{E_1} g(y - x) \de y \de x\right) = 0, \end{equation*} 
and \begin{equation*}\label{km:4.2} \begin{split} \liminf_{R \to \infty} \ff^\lambda(E_R) & = P_\lambda(E) + \rr(E_1) + \rr(E_2) + \g(E) \\ & < P_\lambda(E) + \rr(E_1) + \rr(E_2) + \int_{E_1} \int _{E_2}g(y - x) \de y \de x + \int_{E_2} \int _{E_1}g(y - x) \de y \de x + \g(E) \\ & = \ff^\lambda(E). \end{split} \end{equation*}
Therefore, if $R$ is sufficiently large, we obtain $\ff^\lambda(E_R) < \ff^\lambda(E)$, in contradiction with the minimizing property of $E$. \end{proof}

\section{Nonexistence of minimizers for large masses}\label{sec:4}

The goal of this Section is to prove \cref{theorem:2}. 
We begin by proving some preparatory lemmas.
Let us start with a non-optimality criterion.

\begin{lemma}\label{lemma4.2:km}
Let $g$ be $\rr$-admissible, $q$-growing and infinitesimal and let $G$ be $\g$-admissible. 
There exists $\epsilon>0$, depending on $n$, $\lambda$, $g$, $G$ and $q$, such that the following holds. 
Let $F \subset \R^n \setminus H$ be a set of finite perimeter and assume there exist two sets of finite perimeter $F_1$, $F_2 \subset F$ such that $|F_1|,$ $|F_2|>0$, $|F_1 \cap F_2|=0$, $|F\setminus (F_1 \cup F_2)|=0$ and 
\begin{equation}\label{4.3:km1} \Sigma := P_\lambda(F_1) + P_\lambda(F_2) - P_\lambda(F) \le \frac{1}{2} \ff^\lambda(F_2). \end{equation}
Then, if \begin{equation}\label{4.4:km1} |F_2| \le \epsilon \min \{1, |F_1|\}, \end{equation} there exists a set $G \subset \R^n \setminus H$ with $|G| = |F|$ and $\ff^\lambda(G) < \ff^\lambda(F)$. \end{lemma}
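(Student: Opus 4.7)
The proof mimics the scaling construction from the second case of the proof of \cref{lemma5.1:km}, with the small-mass hypothesis there replaced by a dichotomy argument on the size of $\ff^\lambda(F)$.

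First, using \cref{lemma2.3:np} applied to the bubble $B^\lambda(m)$ for $m \le |B^\lambda|$ together with $P_\lambda(B^\lambda(m)) = n|B^\lambda|^{1/n} m^{(n-1)/n}$, and \cref{theorem3.4:km} for $m \ge 1$, there exists a constant $c_0 = c_0(n,\lambda,g,G)$ and a set $E_0 \subset \R^n \setminus H$ with $|E_0| = m$ and $\ff^\lambda(E_0) \le c_0 \max\{m, m^{(n-1)/n}\}$. If $\ff^\lambda(F) > c_0 \max\{m, m^{(n-1)/n}\}$, simply take $G := E_0$ and the conclusion follows.

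In the opposite case $\ff^\lambda(F) \le c_0 \max\{m, m^{(n-1)/n}\}$, set $m_i := |F_i|$, $\gamma := m_2/m_1$, $l := (1+\gamma)^{1/n}$, and $G := \tilde F := l F_1$, so that $G \subset \R^n \setminus H$ and $|G| = |F|$. \cref{lemma3.1:np} gives $\ff^\lambda(\tilde F) \le l^{2n+q}\ff^\lambda(F_1) \le (1 + K\gamma)\ff^\lambda(F_1)$ for $\epsilon \le 1$ and some $K = K(n, q)$. Combining with the identity
\begin{equation*}
\ff^\lambda(F_1) = \ff^\lambda(F) - \ff^\lambda(F_2) + \Sigma - 2\rr(F_1, F_2),
\end{equation*}
the hypothesis \eqref{4.3:km1}, and positivity of $\rr$ yields
\begin{equation*}
\ff^\lambda(\tilde F) - \ff^\lambda(F) \le -\tfrac{1}{2}\ff^\lambda(F_2) + K\gamma \, \ff^\lambda(F_1).
\end{equation*}

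Hypothesis \eqref{4.4:km1} gives $m_2 \le \epsilon$, $m_1 \ge m_2/\epsilon$, and $m_1 \ge m/(1+\epsilon) \ge m/2$, whence the elementary estimates $\gamma m \le 2m_2 \le 2\epsilon^{1/n} m_2^{(n-1)/n}$ and $\gamma m^{(n-1)/n} \le 2\epsilon^{1/n} m_2^{(n-1)/n}$ follow exactly as in the proof of \cref{lemma5.1:km}. Thus $K\gamma\,\ff^\lambda(F_1) \le K\gamma\, \ff^\lambda(F) \le 2Kc_0\,\epsilon^{1/n}\, m_2^{(n-1)/n}$, and combined with the isoperimetric lower bound $\ff^\lambda(F_2) \ge P_\lambda(F_2) \ge c_{\rm iso}\, m_2^{(n-1)/n}$ (from \eqref{isoperimetric:inequality}) we obtain
\begin{equation*}
\ff^\lambda(\tilde F) - \ff^\lambda(F) \le \left(-\tfrac{c_{\rm iso}}{2} + 2Kc_0\,\epsilon^{1/n}\right) m_2^{(n-1)/n},
\end{equation*}
which is strictly negative for $\epsilon$ sufficiently small, depending only on $n, \lambda, g, G, q$.

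The main obstacle is precisely the absence of an a priori mass bound on $F$ (in \cref{lemma5.1:km} smallness of $m$ directly yielded $\ff^\lambda(F) \le \ff^\lambda(B^\lambda(m)) \le c\max\{m, m^{(n-1)/n}\}$, an ingredient heavily used to bound the error term $\gamma\,\ff^\lambda(F_1)$). The dichotomy above circumvents this issue: either $F$ is already beaten by the reference competitor $E_0$, or one has the same bound on $\ff^\lambda(F)$ as in \cref{lemma5.1:km} and the scaling computation applies essentially verbatim, with the gain of a strict inequality coming from the explicit factor $\epsilon^{1/n}$.
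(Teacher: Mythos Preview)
Your proof is correct and follows the same approach as the paper: the paper also builds a reference competitor $\hat F$ (a disjoint union of well-separated bubbles) with $\ff^\lambda(\hat F)\le c\max\{m,m^{(n-1)/n}\}$ and argues that either $\hat F$ already beats $F$, or else $\ff^\lambda(F)\le c\max\{m,m^{(n-1)/n}\}$ and the rescaling $\tilde F=lF_1$ wins---exactly your dichotomy. Two cosmetic points: since $g$ is not assumed symmetric here, the cross term in your identity should be $\rr(F_1,F_2)+\rr(F_2,F_1)$ rather than $2\rr(F_1,F_2)$ (harmless for the inequality), and if $|B^\lambda|<1$ the interval $m\in(|B^\lambda|,1)$ is not literally covered by \cref{lemma2.3:np} or \cref{theorem3.4:km} as stated---the paper handles all $m$ uniformly via the explicit multi-bubble construction.
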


\begin{proof}
Let us denote $m := |F|$, $m_1 := |F_1|$, $m_2 := |F_2|$ and $\gamma := \frac{m_2}{m_1} \le \epsilon$.
Let us define the sets $\tilde F$ and $\hat F$ in the following way: $\tilde F$ is given by $\tilde F = l \, F_1$, with $l := \sqrt[n]{1 + \gamma}$, so that $|\tilde F| = |F|$, and $\hat F$ is given by a collection of $N \ge 1$ spherical caps $\left\{B^\lambda(v, x_i)\right\}_{1 \le i \le N}$ of equal volume $v$ and with centers located at $x_i = i R e_1$, $i = 1, \dots, N$, with $R$ large enough so that the $B^\lambda(v, x_i)$, $1 \le i \le N$ are pairwise disjoint.
The number $N$ is the smallest integer for which the volume of each spherical cap does not exceed $\min \left\{1, |B^\lambda|\right\}$. Hence $Nv = m$ and $N = \left\lceil \frac{m}{\min\{1, |B^\lambda|\}} \right\rceil$.
If there exists $R > 0$ such that, for the corresponding $\hat F$, one has $\ff^\lambda(\hat F) < \ff^\lambda(F)$, then the proof is concluded with $G = \hat F$. So we can assume that for any $R > 0$ there holds $\ff^\lambda(\hat F) \ge \ff^\lambda(F)$. Hence for $R$ large enough we claim that
\begin{equation}\label{4.5:km1}
\ff^\lambda(F) \le \ff^\lambda(\hat F) \le c(n, \lambda, g, G) \max\left\{m, m^{\frac{n - 1}{n}}\right\}. \end{equation}
Indeed, if $m \ge 1$, estimate \eqref{4.5:km1} follows by the same computations done in the proof of \cref{theorem3.4:km}.
If instead $m < 1$, by \cite[Lemma 3.1]{PascalePozzettaQuantitative} we find
\begin{equation*} \begin{split} P_\lambda(E) = P_\lambda\left(\cup_{i=1}^N B^\lambda(v, x_i)\right) & = \sum_{i=1}^N P_\lambda(B^\lambda(v, x_i)) = c(n, \lambda) N v^{\frac{n - 1}{n}} \le c(n, \lambda) \left(\frac{m}{\min\{1, |B^\lambda|\}} + 1\right) v^{\frac{n - 1}{n}} \\ & \le c(n, \lambda) ( m \, v^{\frac{n - 1}{n}} + v^{\frac{n - 1}{n}}) \le c(n, \lambda) ( m + m^{\frac{n - 1}{n}} ) = c(n, \lambda) m^{\frac{n - 1}{n}}.
\end{split} \end{equation*}
Arguing as in \cref{theorem3.4:km}, if $R$ is so large that $g(x - y) < \frac{1}{N}$ for every $x \in B^\lambda(v, x_j)$, $y \in B^\lambda(v, x_k)$ with $j \neq k$, then \begin{equation*} \begin{split} \rr (E) & \le c(n, \lambda, g, G) m \le c(n, \lambda, g, G) m^{\frac{n - 1}{n}} \end{split} \end{equation*}
and
\begin{equation*}
\begin{split} \ff^\lambda(E) & = P_\lambda(E) + \rr(E) + \g (E) \le c(n, \lambda) |E|^{\frac{n - 1}{n}} + c(n, \lambda, g, G) |E|^{\frac{n - 1}{n}} + \sum_{i = 1}^N \g(B^\lambda(v, x_i)) \le c(n, \lambda, g, G) m^{\frac{n - 1}{n}}, \end{split} \end{equation*} therefore~\eqref{4.5:km1} holds.

We want to show that if $\epsilon$ sufficiently small, then $\ff^\lambda(\tilde F) < \ff^\lambda(F)$, implying the claim with $G= \tilde F$.
By \cref{lemma3.1:np} \begin{equation}\label{4.6:km1} \begin{split} \ff^\lambda(\tilde F) = \ff^\lambda(l F_1) & \le l^{2n + q} \ff^\lambda(F_1) = \ff^\lambda(F_1) + (l^{2n + q} - 1)\ff^\lambda(F_1). \end{split} \end{equation}
Choosing $\epsilon \le 1$, we have $1 \le l \le 2^{\frac{1}{n}}$, and by Taylor's formula we obtain $l^{2n + q} - 1 = (1 + \gamma)^{2 + q/n} - 1 \le \gamma K$ for some $K > 0$ independent of $\gamma$, for $\epsilon$ sufficiently small.
By~\eqref{4.6:km1} we arrive at \begin{equation*} \ff^\lambda(\tilde F) - \ff^\lambda(F_1) \le \gamma K \ff^\lambda(F_1). \end{equation*}
By the definition of $\Sigma$ and since $\rr(F_1) + \rr(F_2) \le \rr(F)$ 
\begin{equation}\label{4.7:km1} \begin{split} \ff^\lambda(\tilde F) - \ff^\lambda(F) & \le \rr(F_1) + \g(F_1) + \rr(F_2) + \g(F_2) - \rr(F) - \g(F) + \Sigma - \ff^\lambda(F_2) + \gamma K \ff^\lambda(F_1) \\ & \le - \frac{1}{2} \ff^\lambda(F_2) + \gamma K \ff^\lambda(F_1). \end{split} \end{equation}
By positivity of $\rr$ and $\g$ and the isoperimetric inequality, we have $\ff^\lambda(F_2) > P_\lambda(F_2) \ge c(n, \lambda) m_2^{\frac{n - 1}{n}}$.
As in~\eqref{zz:eqStimaEnergia} we obtain $\ff^\lambda(F_1) \le \ff^\lambda(F)$.
By~\eqref{4.5:km1}, since $\gamma m \le 2 m_2$ and $\gamma \le \epsilon$,~\eqref{4.7:km1} turns into \begin{equation*}\label{4.8:km1} \begin{split} \ff^\lambda(\tilde F) - \ff^\lambda(F) & \le - c(n, \lambda) m_2^{\frac{n - 1}{n}} + \gamma K \ff^\lambda(F) \le - c(n, \lambda) m_2^{\frac{n - 1}{n}} + C(n, \lambda, g, G, q) \max\left\{m_2, \epsilon^{\frac{1}{n}} m_2^{\frac{n - 1}{n}}\right\}. \end{split} \end{equation*}
Since $m_2 \le \epsilon$ by~\eqref{4.4:km1}, for $\epsilon$ sufficiently small the assertion of the lemma holds with $G = \tilde F$. \end{proof}

Next lemma is an improvement of the standard density estimate for quasiminimizers. 
\begin{lemma}\label{lemma4.3:km}
Let $g$ be $\rr$-admissible, $q$-growing and infinitesimal, and let $G$ be $\g$-admissible. Then there exists $c = c(n, \lambda, g, G, q)>0$ such that the following holds. 
Let $E \subset \R^n \setminus H$ be a minimizer of $\ff^\lambda$ with $|E| = m$, $m > 0$.
Then for almost every $x \in E$ there holds \begin{equation*}\label{4.9:km} |E \cap B_{1}(x)| \ge c \min \{1, m\}. \end{equation*} \end{lemma}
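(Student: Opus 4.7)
The plan is to argue by contradiction, applying the non-optimality criterion \cref{lemma4.2:km} to the splitting $F_2 := E \cap B_r(x)$, $F_1 := E \setminus B_r(x)$ at a carefully chosen radius $r \in (0,1)$. The intuition is that if the density is too small, one can slice off a tiny piece with small interface cost and redistribute it, strictly decreasing the energy.

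First I would fix a point $x$ of density one for $E$ and set $\phi(r) := |E \cap B_r(x)|$ for $r > 0$. By the coarea formula $\phi$ is absolutely continuous with $\phi'(r) = \hh^{n-1}(E \cap \partial B_r(x))$ for a.e. $r$, $\phi(0^+) = 0$, and $\phi(r) > 0$ for all $r > 0$. Since $E \subset \{x_n > 0\}$, the shared reduced boundary $\partial^* F_1(r) \cap \partial^* F_2(r) = E \cap \partial B_r(x)$ lies entirely in $\R^n \setminus H$, while the $\partial H$-contributions of $F_1(r)$ and $F_2(r)$ partition that of $E$; a short bookkeeping then yields the slicing identity
\begin{equation*}
\Sigma(r) := P_\lambda(F_1(r)) + P_\lambda(F_2(r)) - P_\lambda(E) = 2 \phi'(r) \quad \text{for a.e. } r > 0.
\end{equation*}
Combined with the isoperimetric inequality \eqref{isoperimetric:inequality}, $\ff^\lambda(F_2(r)) \ge P_\lambda(F_2(r)) \ge n|B^\lambda|^{1/n} \phi(r)^{(n-1)/n}$. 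Setting $C := n |B^\lambda|^{1/n}/4$, any radius $r$ with $\phi'(r) \le C \phi(r)^{(n-1)/n}$ then fulfills hypothesis \eqref{4.3:km1}.

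Next I would assume by contradiction that $\phi(1) < c \min\{1, m\}$ for a small constant $c = c(n, \lambda, g, G, q)$, to be chosen, and show a radius $r^\ast \in (0,1)$ satisfying \eqref{4.3:km1} must exist. Otherwise $\phi'(r) > C \phi(r)^{(n-1)/n}$ a.e. on $(0,1)$, hence $(\phi^{1/n})' > C/n$ a.e.; integrating from $0^+$ to $1$ with $\phi(0^+) = 0$ would give $\phi(1) \ge (C/n)^n$, contradicting $\phi(1) < c \le (C/n)^n$. It remains to verify hypothesis \eqref{4.4:km1} at $r^\ast$: from $|F_2(r^\ast)| = \phi(r^\ast) \le \phi(1) < c \min\{1, m\}$ and $|F_1(r^\ast)| = m - \phi(r^\ast) \ge (1-c) m$, it suffices to choose $c$ so small that $c \le \epsilon(1 - c)$ (with $\epsilon$ the constant of \cref{lemma4.2:km}), which forces $|F_2(r^\ast)| \le \epsilon \min\{1, |F_1(r^\ast)|\}$ uniformly in the regimes $m \ge 1$ and $m < 1$. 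Then \cref{lemma4.2:km} yields a competitor $G$ with $|G| = m$ and $\ff^\lambda(G) < \ff^\lambda(E)$, contradicting the minimality of $E$ and proving the estimate with any $c$ smaller than $\min\{(C/n)^n, \epsilon/(1+\epsilon)\}$.

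The main obstacle is checking the slicing identity for $P_\lambda$ when $B_{r^\ast}(x)$ may cross $\partial H$: one must verify that the only contribution to $\Sigma(r)$ comes from the Euclidean slice $E \cap \partial B_r(x)$, with the $\lambda$-weighted $\partial H$-terms cancelling exactly because $\partial^* F \cap \partial H$ decomposes as the essentially disjoint union of $\partial^* F_1(r) \cap \partial H$ and $\partial^* F_2(r) \cap \partial H$. A secondary care, trivial but essential to the statement, is to track that the final constant $c$ depends only on $n, \lambda, g, G, q$ and not on the minimizer $E$: this is automatic here since $C$ depends only on $n, \lambda$, and $\epsilon$ only on $n, \lambda, g, G, q$ by \cref{lemma4.2:km}.
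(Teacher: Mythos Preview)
Your proposal is correct and follows essentially the same approach as the paper: both arguments hinge on \cref{lemma4.2:km}, the slicing identity $\Sigma(r) = 2\hh^{n-1}(E^{(1)}\cap\partial B_r(x))$, and an ODE comparison for $r\mapsto |E\cap B_r(x)|$. The only notable difference is a mild streamlining on your side: the paper first fixes a range $(0,r_1)$ on which \eqref{4.4:km1} holds automatically, deduces that \eqref{4.3:km1} fails there, and then bounds $P(F_2^r)$ from below via the relative isoperimetric inequality in $B_r$ (which forces the extra case distinction $|E\cap B_r|\gtrless \tfrac12\omega_n r^n$); you instead bound $\ff^\lambda(F_2(r))$ directly by $P_\lambda(F_2(r))\ge n|B^\lambda|^{1/n}\phi(r)^{(n-1)/n}$ via \eqref{isoperimetric:inequality}, which lets you run the ODE on all of $(0,1)$ without the case split and then argue by contradiction.
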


\begin{proof}
For $r > 0$ and $x \in E$, let $F_1^r := E \setminus B_r(x)$ and $F_2^r := E \cap B_r(x)$.
Note that $|F_1^r| + |F_2^r| = m$ and $|F_2^r| \le \omega_n r^n$.
Then there exists $C > 0$, depending on $n$, $\lambda$, $g$, $G$ and $q$, such that~\eqref{4.4:km1} holds for all $r \le r_1 := C \min \left\{1, \sqrt[n]{m}\right\}$.
Note that we can choose $C \le 1$.
Since $E$ is a minimizer, \cref{lemma4.2:km} implies that~\eqref{4.3:km1} cannot be satisfied for any $r \le r_1$.
Equivalently, recalling also \cite[Corollary 2.5]{PascalePozzettaQuantitative}, for all $r \le r_1$ we have \begin{equation}\label{4.10:km1} \Sigma^r := P_\lambda(F_1^r) + P_\lambda(F_2^r) - P_\lambda(E) > \frac{1}{2} \ff^\lambda (F_2^r) > \frac{1}{2} P_\lambda(F_2^r) \ge \frac{1 - \lambda}{4} P(F_2^r). \end{equation}
At the same time,
for almost every $r$ we have \begin{equation*} \Sigma^r = 2 \hh^{n - 1}(E^{(1)} \cap \partial B_r(x)).
\end{equation*}
By~\eqref{4.10:km1}, for a constant $c(n, \lambda) \in \left(0, \frac{1}{2}\right)$ there holds
\begin{equation}\label{4.11:km1} 2\hh^{n - 1}(E^{(1)} \cap \partial B_r(x)) > c(n, \lambda) \, (\hh^{n - 1}(\partial^*E \cap B_r(x)) + \hh^{n - 1}(E^{(1)}\cap \partial B_r(x))),\end{equation}
for almost every $r$.
Let us now distinguish two cases.
If there exists $r_2 \in \left(\frac{r_1}{2}, r_1\right)$ such that $|E \cap B_{r_2}| \ge \frac12 \omega_n r_2^n$, then by the choice of $r_1$ we get \begin{equation*} |E \cap B_1| \ge |E \cap B_{r_2}| \ge c(n) \left(\frac{r_1}{2}\right)^n = c(n, \lambda, g, G, q) \min\{1, m\}, \end{equation*} and the proof is concluded. \\
Let us assume that $|E \cap B_r| < \frac12 \omega_n r^n$ for all $r \in \left(\frac{r_1}{2}, r_1\right)$.
Then we rearrange terms in~\eqref{4.11:km1} and apply the relative isoperimetric inequality \cite[Proposition 12.37]{MaggiBook} to the right-hand side to obtain \begin{equation*}\label{4.12:km1} \hh^{n - 1}(E^{(1)} \cap \partial B_r(x)) \ge c(n, \lambda) |E \cap B_r(x)|^{\frac{n - 1}{n}}. \end{equation*}
Let us denote $U(r) := |E \cap B_r(x)|$.
Then $\frac{\d U(r)}{\d r} = \hh^{n - 1}(E^{(1)} \cap \partial B_r(x))$ for all $r \in \left(\frac{r_1}{2}, r_1\right)$ and \begin{equation*}\label{4.13:km1} \frac{\d U(r)}{\d r} \ge c(n, \lambda) U^{\frac{n - 1}{n}}(r) \qquad \forall  r \in \left(\frac{r_1}{2}, r_1\right). \end{equation*}
For $\hh^n$-a.e. $x \in E$, we have $U(r) > 0$ for all $r > 0$ and ODE comparison in $r \in \left(\frac{r_1}{2}, r_1\right)$ implies that \begin{equation*} U^{1/n}(r) \ge U^{1/n}\left(\frac{r_1}{2}\right) + c(n, \lambda) \left(r-\frac{r_1}{2}\right) \ge c(n, \lambda) \left(r-\frac{r_1}{2}\right) \qquad \forall r \in \left(\frac{r_1}{2}, r_1\right). \end{equation*} 
Then the lemma follows as
\begin{equation*} c(n, \lambda, g, G) \min\{1, m\} = c(n, \lambda) r_1^n \le U(r_1) = |E \cap B_{r_1}(x)| = \left|E \cap B_{C \min \left\{1, \sqrt[n]{m}\right\}}(x)\right| \le |E \cap B_{1}(x)|. \end{equation*} \end{proof}

\begin{remark} We remark that the density estimate in \cref{lemma4.3:km} is more precise than the one provided in \cref{subsec:3.2}.
Indeed, in \cref{lemma:quasiminimal} $K$ and $r_0$ depend on the minimizer, and consequently $c$ in \cref{density:quasiminimal} also inherits this dependence.
At the same time, \cref{lemma4.3:km} requires $g$ to be $q$-growing, which is not required in \cref{lemma:quasiminimal}. \end{remark}

The following lemma will imply \cref{theorem:2} for $\beta \in(0, 1)$.

\begin{lemma}\label{lemma7.2:km} Let \begin{equation*}
g(x) = \frac{1}{|x|^\beta}, \qquad 0 < \beta < n, \, x \in \R^n \setminus \{0\} ,
\end{equation*}
and let $G$ be $\g$-admissible.
Let $E$ be a minimizer of $\ff^\lambda$ with $|E| = m$ and $m \ge 1$.
Then
\begin{equation}\label{7.3:km} 
cm^{\frac{1}{\beta}} \le {\rm diam} \, E \le C m,
\end{equation}
 for some $C$, $c > 0$ depending only on $n$, $\beta$, $\lambda$, $G$. 
\end{lemma}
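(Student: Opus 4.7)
The plan is to prove the two inequalities separately, relying on the improved density estimate of \cref{lemma4.3:km} for the upper bound and on the explicit low-energy competitor of \cref{theorem3.4:km} for the lower bound. Throughout I identify $E$ with its open representative $E^{(1)}$, available from \cref{lemma:quasiminimal} and \cref{density:quasiminimal}, and write $D := \mathrm{diam}\,E$, which is finite by \cref{teo:limitato}.

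For the upper bound, since $g(x) = |x|^{-\beta}$ is $\rr$-admissible, $0$-decreasing, infinitesimal and symmetric, \cref{lemma4.3:km} applies and gives, for $m\ge 1$, a constant $c_0 = c_0(n,\lambda,\beta,G)>0$ with
\[|E\cap B_1(x)| \ge c_0, \qquad \forall\, x\in E^{(1)}.\]
Next I claim $E^{(1)}$ is topologically connected. If, on the contrary, $E^{(1)} = U_1 \sqcup U_2$ with $U_1,U_2$ open, disjoint and non-empty, a point in $\partial^*U_1\cap \partial^*U_2$ would have density $1/2$ in each $U_i$ and hence density $1$ in $E$, forcing it to lie in $U_1\cup U_2$, which is impossible. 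So $\hh^{n-1}(\partial^*U_1\cap \partial^*U_2)=0$ and $P(E)=P(U_1)+P(U_2)$ with $|U_1|,|U_2|>0$, contradicting the indecomposability of $E$ established in \cref{teo:connesso}. I then pick a maximal pairwise disjoint family $\{B_1(x_i)\}_{i=1}^N$ with $x_i\in E^{(1)}$; summing the density estimate over the disjoint balls gives $c_0 N\le m$, hence $N\le m/c_0$. By maximality the balls $\{B_2(x_i)\}$ cover $E^{(1)}$, and the graph on $\{x_i\}$ with an edge whenever $B_2(x_i)\cap B_2(x_j)\ne\emptyset$ must be connected (otherwise the union splits $E^{(1)}$ into two non-empty disjoint open subsets). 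Any two vertices are linked by a chain of at most $N-1$ edges, each of length strictly less than $4$, and every point of $E^{(1)}$ lies within distance $2$ of some $x_i$, whence $D\le 4(N-1)+4 \le 4N \le 4m/c_0 = Cm$.

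For the lower bound, $|y-x|\le D$ for almost every $x,y\in E$, so
\[\rr(E) = \int_E\int_E \frac{1}{|y-x|^\beta}\,\de y\,\de x \ge \frac{m^2}{D^\beta}.\]
Since $g$ is $\rr$-admissible and infinitesimal, \cref{theorem3.4:km} produces a competitor $\tilde E\subset \R^n\setminus H$ with $|\tilde E|=m$ and $\ff^\lambda(\tilde E)\le C_1 m$ for some $C_1=C_1(n,\lambda,\beta,G)$. Using the minimality of $E$ and the positivity of $P_\lambda$ and $\g$,
\[\frac{m^2}{D^\beta} \le \rr(E) \le \ff^\lambda(E) \le \ff^\lambda(\tilde E) \le C_1 m,\]
which gives $D \ge (m/C_1)^{1/\beta} = c\,m^{1/\beta}$.

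The main subtlety is the topological connectedness of $E^{(1)}$: without it, the packing argument would only control the diameter of each connected component and $E$ could a priori consist of far-apart pieces, destroying the linear upper bound. The indecomposability of \cref{teo:connesso}, combined with the open representative made available by the quasiminimality analysis of \cref{subsec:3.2}, is exactly what rules this out; everything else is a straightforward covering/competitor comparison.
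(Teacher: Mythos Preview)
Your proof is correct. The lower bound argument is identical to the paper's: both compare the trivial lower bound $\rr(E)\ge m^2/D^\beta$ with the linear-in-$m$ energy bound coming from \cref{theorem3.4:km}.

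For the upper bound you take a genuinely different route. The paper picks two points $x^{(1)},x^{(2)}\in\bar E$ at maximal distance, rotates so that $x^{(2)}-x^{(1)}$ lies in the $(e_1,e_n)$-plane, and slices $E$ by $\lfloor d/(3\sqrt{2})\rfloor$ disjoint parallel slabs of width $1$; indecomposability (\cref{teo:connesso}) forces each slab to meet $E$ in positive measure (otherwise $E$ splits into two pieces at positive distance), and one then applies \cref{lemma4.3:km} in each slab to get $m\ge cN\ge c\,d$. Your argument instead upgrades indecomposability to topological connectedness of the open representative $E^{(1)}$, takes a maximal packing by unit balls centred in $E^{(1)}$, bounds their number by $m/c_0$ via \cref{lemma4.3:km}, and chains the doubled balls through a connected intersection graph to bound the diameter. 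Both routes use exactly the same two ingredients; the paper's is slightly more direct because the slab separation is at positive distance, so the ``$P(E)=P(E_1)+P(E_2)$'' step is immediate. In your version the corresponding step---that a nontrivial open partition $E^{(1)}=U_1\sqcup U_2$ yields $P(E)=P(U_1)+P(U_2)$---is true but deserves one more line: one checks that the essential boundary satisfies $\partial^e U_i\subset\partial^e E$ (density $0$ or $1$ for $E$ forces density $0$ or $1$ for $U_i$ since $E^{(1)}$ is open), so $P(U_i)<\infty$, and then $\hh^{n-1}(\partial^e U_1\cap\partial^e U_2)=0$ by your density-$1$ argument. With that, your packing argument goes through cleanly and gives the same linear bound.
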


\begin{proof} 
By \cref{teo:limitato} and \cref{teo:connesso} we know that $E$ is essentially bounded and indecomposable.
In particular $d := {\rm diam}\, E < \infty$.
By \cref{theorem3.4:km} we get the existence of $c(n, \lambda, \beta, G) > 0$ such that \begin{equation*}\label{7.4:km} \begin{split} \frac{m^2}{d^\beta} & \le \int_E\int_E \frac{1}{|x - y|^\beta}\de y \de x = \rr(E) \le \ff^\lambda(E) \le cm, \end{split} \end{equation*} which implies the first bound in~\eqref{7.3:km}.
\newline In order to prove the upper bound in~\eqref{7.3:km}, we may clearly assume that $\frac{d}{\sqrt{2}} > 3$. Recalling that we identify $E$ with the bounded open set $E^{(1)}$, we let $x^{(1)}$, $x^{(2)} \in \bar E$ such that \begin{equation*} |x^{(1)} - x^{(2)}| = d. \end{equation*}
Up to a rotation with respect to an axis orthogonal to $\{x_n = 0\}$, we can write \begin{equation*} x^{(2)} - x^{(1)} = \Braket{x^{(2)} - x^{(1)}, e_1} e_1 + \Braket{x^{(2)} - x^{(1)}, e_n} e_n. \end{equation*}
In particular, \begin{equation*} \max \left\{\left|\Braket{x^{(2)} - x^{(1)}, e_1}\right|, \left|\Braket{x^{(2)} - x^{(1)}, e_n}\right|\right\} \ge \frac{d}{\sqrt{2}}. \end{equation*}
Assume for simplicity that \begin{equation*} \left|\Braket{x^{(2)} - x^{(1)}, e_n}\right| \ge \frac{d}{\sqrt{2}}, \end{equation*} the remaining case being analogous.
Up to relabeling, assume also that \begin{equation*} \Braket{x^{(2)}, e_n} > \Braket{x^{(1)}, e_n} \end{equation*}
Let $N$ be the largest integer smaller than $\frac{d}{3\sqrt{2}}$, i.e. $N := \left\lfloor\frac{d}{3\sqrt{2}}\right\rfloor$.
Since $E$ is indecomposable, for every $j = 1, \dots, N$ there holds \begin{equation*} \left|E \cap \left\{3 j -1 + \Braket{x^{(1)}, e_n} < x_n < 3 j + \Braket{x^{(1)}, e_n}\right\}\right| > 0. \end{equation*} 
For every $j = 1, \dots, N$, let \begin{equation*} x_j \in E \cap \left\{3 j -1 + \Braket{x^{(1)}, e_n} < x_n < 3 j + \Braket{x^{(1)}, e_n}\right\}. \end{equation*} 
The balls $B_{1}(x_j)$, $j = 1, \dots, N$, are pairwise disjoint and, for a suitable choice of $x_j$, we can apply \cref{lemma4.3:km} to get \begin{equation*}\label{7.5:km} m = |E| \ge \sum_{j = 1}^N |B_{1}(x_j) \cap E| \ge c(n, \lambda, \beta, G) \, N \ge c(n, \lambda, \beta, G) \, d. \end{equation*} \end{proof}

The following lemma will imply \cref{theorem:2} for $\beta = 1$.

\begin{lemma}\label{lemma7:fn} Let \begin{equation*} g(x) = \frac{1}{|x|^\beta}, \qquad \beta \in (0, n), \, x \in \R^n \setminus \{0\} \end{equation*} and let $G$ be $\g$-admissible.
Let $E \subset \R^n \setminus H$ be a minimizer for $\ff^\lambda$ with $|E| = m$, $m > 0$.
Then \begin{equation*} \int_E \int_E \frac{1}{|x - y|^{\beta - 1}} \de y \de x \le c(n)\, m. \end{equation*} \end{lemma}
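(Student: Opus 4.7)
The plan is to adapt the Frank--Nam slicing argument to our half-space setting. The key observation that makes the argument go through in all $n$ coordinate directions is that \emph{horizontal} translations preserve both the volume constraint and the gravitational energy $\g$; the same horizontal translations then trivially preserve the $\partial H$-contact term in $P_\lambda$. This will let us reduce the quantity to be estimated to slice measures of $E$, which integrate to $|E|=m$.

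Concretely, fix $i \in \{1, \dots, n\}$, a level $a \in \R$ (restricted to $a>0$ when $i=n$), and a horizontal unit vector $\eta \neq e_i$. Since $E$ is essentially bounded by \cref{teo:limitato}, I would split $E$ along $\{x_i = a\}$ into $E^{\pm}_a := E \cap \{\pm(x_i - a) > 0\}$ and consider the competitor $E_{a,t} := E^{-}_a \cup (E^{+}_a + t\eta)$ for $t>0$ so large that the two pieces are disjoint. Since $\eta \perp e_n$, one has $|E_{a,t}| = m$ and $\g(E_{a,t}) = \g(E)$; a coarea-based accounting, which discards $\hh^{n-1}(\partial^* E \cap \{x_i = a\})$ for a.e.\ $a$, yields
\begin{equation*}
P_\lambda(E_{a,t}) - P_\lambda(E) = 2\, \hh^{n-1}(E \cap \{x_i = a\}),
\end{equation*}
while
\begin{equation*}
\lim_{t \to \infty} \bigl[\rr(E_{a,t}) - \rr(E)\bigr] = -2 \int_{E^{-}_a}\int_{E^{+}_a} \frac{\de y \de x}{|y-x|^{\beta}}.
\end{equation*}
Minimality $\ff^\lambda(E_{a,t}) \ge \ff^\lambda(E)$, passed to the limit $t\to\infty$, then produces the slice inequality $\hh^{n-1}(E \cap \{x_i = a\}) \ge \int_{E^{-}_a}\int_{E^{+}_a} |y-x|^{-\beta} \de y \de x$ for a.e.\ $a$. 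Integrating in $a$, using $\int \hh^{n-1}(E \cap \{x_i = a\})\de a = m$ on the left and Fubini together with symmetrization in $(x,y)$ on the right, gives $\int_E\int_E |y_i - x_i|\,|y-x|^{-\beta} \de y \de x \le 2m$ for every $i$. Summing over $i=1,\dots,n$ and applying the elementary bound $\sum_{i=1}^n |y_i - x_i| \ge |y-x|$ closes the argument with $c(n) = 2n$.

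The delicate point is the vertical slicing $i=n$, which has no analogue in the classical Euclidean setting. The cut must be restricted to $a > 0$ so that the upper piece $E^{+}_a$ lies strictly inside $\R^n \setminus H$ and has empty contact with $\partial H$; the lower piece $E^{-}_a$ then carries the entire capillary term $\lambda \hh^{n-1}(\partial^* E \cap \partial H)$, so this contribution is preserved exactly under the splitting. The translation direction must be taken horizontally (any $\eta \ne e_i$ will do) in order to keep both $\g$ and the $\partial H$-contact unchanged. Once these two constraints are respected, the vertical direction is handled on exactly the same footing as the horizontal ones, and, somewhat remarkably, no a priori quantitative bound on the vertical extent of $E$ is needed.
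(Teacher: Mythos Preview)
Your argument is correct and follows the same Frank--Nam slicing strategy as the paper, but with a different averaging step at the end. The paper slices along \emph{all} directions $\nu\in\mathbb{S}^{n-1}\setminus\{\pm e_n\}$, always translating the cut piece in the horizontal direction $\nu_{hor}/|\nu_{hor}|$, obtains $m\ge\int_E\int_E\langle\nu,x-y\rangle_+|x-y|^{-\beta}\,\d y\,\d x$, and then integrates over $\mathbb{S}^{n-1}$ using the identity $\int_{\mathbb{S}^{n-1}}\langle\nu,z\rangle_+\,\d\nu=c(n)|z|$; in particular it never slices along $e_n$, since $\{\pm e_n\}$ is a null set on the sphere. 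You instead slice only along the $n$ coordinate directions, sum, and use $\sum_i|z_i|\ge|z|$. Your route is a touch more elementary (no spherical identity) and yields the explicit constant $c(n)=2n$; the paper's route sidesteps the need to justify the vertical cut altogether, whereas you handle $i=n$ directly by observing that for $a>0$ the upper piece has no $\partial H$--contact while the lower piece carries the full capillary term. One cosmetic point: the restriction $\eta\neq e_i$ is unnecessary for $i<n$ (translating by $t e_i$ already separates the pieces), though it does no harm.
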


\begin{proof}
Let $\nu \in \mathbb{S}^{n - 1}\setminus\{\pm e_n\}$ and $t \in \R$.
Denote \begin{equation*} \begin{split} & E^+_{\nu, t} := E \cap \{\Braket{\nu, x} > t\} \\ & E^-_{\nu, t} := E \cap \{\Braket{\nu, x} < t\}. \end{split} \end{equation*}
Let $\nu_1 := \frac{\nu_{hor}}{|\nu_{hor}|}$, where $\nu_{hor}$ is the orthogonal projection of $\nu$ on $\{x_n = 0\}$.
For any $\rho \ge 0$, the set \begin{equation*} E^+_{\nu, t} \cup (E^-_{\nu, t} - \rho \nu_1) \end{equation*} has measure $m$ and, by minimality of $E$, \begin{equation}\label{13:fn} \ff^\lambda(E^+_{\nu, t} \cup (E^-_{\nu, t} - \rho \nu_1)) \ge \ff^\lambda(E). \end{equation}
For any $\rho > 0$ and for a.e. $t \in \R$ \begin{equation*} P_\lambda(E^+_{\nu, t} \cup (E^-_{\nu, t} - \rho \nu_1)) = P_\lambda (E^+_{\nu, t}) + P_\lambda (E^-_{\nu, t}) \le P_\lambda(E) + 2 \hh^{n - 1}(E \cap \{\Braket{\nu, x} = t\}). \end{equation*}
For any $\rho \ge 0$ we have
\begin{equation*} \begin{split} \int_{E^+_{\nu, t} \cup (E^-_{\nu, t} - \rho \nu_1)} \int_{E^+_{\nu, t} \cup (E^-_{\nu, t} - \rho \nu_1)} \frac{1}{|x - y|^\beta}\de y \de x = & \int_{E^+_{\nu, t}}\int_{E^+_{\nu, t}} \frac{1}{|x - y|^\beta}\de y \de x + \int_{E^-_{\nu, t}}\int_{E^-_{\nu, t}} \frac{1}{|x - y|^\beta}\de y \de x \\ & + 2 \int_{E^+_{\nu, t}}\int_{E^-_{\nu, t}}\frac{1}{|x - y + \rho \nu_1|^\beta}\de y \de x. \end{split} \end{equation*}
Moreover
\begin{equation*} \int_{E^+_{\nu, t}}\int_{E^-_{\nu, t}}\frac{1}{|x - y + \rho \nu_1|^\beta}\de y \de x \to 0 \end{equation*} as $\rho \to \infty$.
Hence, by~\eqref{13:fn}, letting $\rho \to \infty$, we get
\begin{equation*} \begin{split} & P_\lambda(E) + 2 \hh^{n - 1}(E \cap \{\Braket{\nu, x} = t\}) + \int_{E^+_{\nu, t}}\int_{E^+_{\nu, t}} \frac{1}{|x - y|^\beta}\de y \de x + \int_{E^-_{\nu, t}}\int_{E^-_{\nu, t}} \frac{1}{|x - y|^\beta}\de y \de x + \g(E^+_{\nu, t}) + \g(E^-_{\nu, t}) \\ & \ge P_\lambda(E) + \int_{E^+_{\nu, t}}\int_{E^+_{\nu, t}} \frac{1}{|x - y|^\beta}\de y \de x + \int_{E^-_{\nu, t}}\int_{E^-_{\nu, t}} \frac{1}{|x - y|^\beta}\de y \de x + 2 \int_{E^+_{\nu, t}}\int_{E^-_{\nu, t}} \frac{1}{|x - y|^\beta}\de y \de x + \g(E). \end{split} \end{equation*}
Then
\begin{equation*} \begin{split} \hh^{n - 1}(E \cap \{\Braket{\nu, x}\}= t) & \ge \int_{E^+_{\nu, t}}\int_{E^-_{\nu, t}} \frac{1}{|x - y|^\beta}\de y \de x \\ &
= \int_E \int_E
%\chi_{(- \infty, \Braket{\nu, x})}(t) \chi_{(\Braket{\nu, y}, + \infty)}(t)
\chi_{\{\Braket{\nu,\cdot}<t\}}(y) \chi_{\{\Braket{\nu,\cdot}>t\}}(x)
\frac{1}{|x - y|^\beta}\de y \de x. \end{split} \end{equation*}
Integrating the last inequality with respect to $t \in \R$, by Fubini's theorem we get \begin{equation*} 
\begin{split}
m &\ge
\int_E \int_E \int_{-\infty}^{+\infty} \chi_{\{\Braket{\nu,\cdot}<t\}}(y) \chi_{\{\Braket{\nu,\cdot}>t\}}(x) \de t \frac{1}{|x - y|^\beta}\de y \de x
=
\int_E \int_E \int_{-\infty}^{+\infty} 
\chi_{(\Braket{\nu,y}, \Braket{\nu, x})}(t)
\de t \frac{1}{|x - y|^\beta}\de y \de x
\\
&=\int_E \int_E \frac{\Braket{\nu, x - y}_+}{|x - y|^\beta}\de y \de x.    
\end{split}
\end{equation*}
Further integrating over $\mathbb{S}^{n - 1} \setminus \{\pm e_n\}$, since
\begin{equation*} \int_{\mathbb{S}^{n - 1}} \Braket{\nu, x - y}_+ \de \nu = c(n) |x - y|, \end{equation*} 
by symmetry of $\mathbb{S}^{n-1}$, we conclude that
\[
|\mathbb{S}^{n-1}| m \ge
\int_E \int_E \int_{\mathbb{S}^{n-1}} \Braket{\nu, x - y}_+ \de \nu \frac{1}{|x - y|^\beta}\de y \de x
= c(n) \int_E \int_E  \frac{1}{|x - y|^{\beta-1}}\de y \de x.
\]
\end{proof}

The following lemma will imply \cref{theorem:2} for $\beta \in (1,2]$.

\begin{lemma}\label{lemma8:fn} Let \begin{equation*} g(x) = \frac{1}{|x|^\beta}, \qquad 0 < \beta < n, \, x \in \R^n \setminus \{0\} \end{equation*} and let $G$ be $\g$-admissible.
Let $E \subset \R^n \setminus H$ be a minimizer for $\ff^\lambda$ with $|E| = m$, $m > \omega_n$.
Then, for $1 \le r \le \frac{{\rm diam} E}{2}$, \begin{equation*} |E \cap B_r(x)| \ge c(n, \lambda, \beta, G) \, r \qquad {\rm for \, a.e.}\, x \in E. \end{equation*} \end{lemma}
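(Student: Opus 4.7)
The plan is to combine the uniform density lower bound of \cref{lemma4.3:km} with the indecomposability of minimizers granted by \cref{teo:connesso}, so as to cover $B_r(x)\cap E$ by a long chain of pairwise disjoint unit balls, each contributing a uniform amount of $E$-mass. This turns the one-dimensional structure imposed by indecomposability and large diameter into the desired linear lower bound on $|E\cap B_r(x)|$; notice that, once these two ingredients are in hand, the minimality of $E$ plays no further role.

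First I would apply \cref{lemma4.3:km} to $g(y)=|y|^{-\beta}$, which is $\mathscr{R}$-admissible, $0$-decreasing and infinitesimal. This produces a constant $c_0=c_0(n,\lambda,\beta,G)>0$ such that for a.e.\ $w\in E$,
\[
|E\cap B_1(w)| \;\ge\; c_0\min\{1,m\} \;\ge\; c_0\min\{1,\omega_n\},
\]
where in the last step the hypothesis $m>\omega_n$ is used to absorb the $m$-dependence into the constant.

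Next, fix a density-one point $x \in E$ and set $D := \mathrm{diam}(E)$. By the triangle inequality applied to the two endpoints of the diameter there exists $z\in E$ with $|z-x|\ge D/2\ge r$. I claim that for every integer $k\ge 1$ with $4k+2\le r$ the spherical shell $A_k:=\{y:4k<|y-x|<4k+1\}$ has $|E\cap A_k|>0$. Indeed, if this fails, then by Fubini in polar coordinates $\hh^{n-1}(E^{(1)}\cap\partial B_\rho(x))=0$ for a.e.\ $\rho\in(4k,4k+1)$, and for any such $\rho$ the standard splitting identity
\[
P(E) = P(E\cap B_\rho(x)) + P(E\setminus B_\rho(x))
\]
would contradict indecomposability of $E$ from \cref{teo:connesso}, since both pieces have positive Lebesgue measure (containing neighborhoods of $x$ and $z$ respectively). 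For each admissible $k$ I would then pick $w_k\in A_k\cap E$ at which the density estimate of the first step applies. The ball $B_1(w_k)$ sits inside the annular shell $\{y:4k-1<|y-x|<4k+2\}\subset B_r(x)$, and because these shells are pairwise disjoint across $k$, the balls $B_1(w_k)$ are pairwise disjoint as well.

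Finally, summing $|E\cap B_1(w_k)|\ge c_0\min\{1,\omega_n\}$ over the $N:=\lfloor(r-2)/4\rfloor$ admissible indices gives $|E\cap B_r(x)|\ge c_0\min\{1,\omega_n\}\,N\ge c\,r$ once $r$ is bounded away from $0$, say $r\ge 6$. For $r\in[1,6]$ the density estimate applied directly at $x$ already yields $|E\cap B_r(x)|\ge|E\cap B_1(x)|\ge c_0\min\{1,\omega_n\}\ge (c_0\min\{1,\omega_n\}/6)\,r$. Combining the two regimes produces a single constant $c=c(n,\lambda,\beta,G)>0$, as required.

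The main obstacle is the chain-of-annuli step: one must justify rigorously that the vanishing of $|E\cap A_k|$ allows to select a radius $\rho$ in $(4k,4k+1)$ along which the perimeter splits additively, and then use it to contradict the indecomposability of $E$. The remaining steps are essentially a counting argument.
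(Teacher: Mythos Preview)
Your proposal is correct and follows essentially the same approach as the paper: both arguments combine the uniform density estimate of \cref{lemma4.3:km} with the indecomposability from \cref{teo:connesso} to build a chain of pairwise disjoint unit balls inside $B_r(x)$, each centered at a point of $E$ lying in a suitable annulus, and then sum the contributions. The only differences are cosmetic (annulus spacing $4$ versus $3$, and cutoff $r\ge 6$ versus $r\ge 4$), and you give a slightly more explicit justification of why each annulus must meet $E$ in positive measure, which the paper simply attributes to indecomposability.
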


\begin{proof} Let $N := \left\lfloor\frac{r - 1}{3}\right\rfloor$ and $x \in E$.
If $r < 4$, by \cref{lemma4.3:km} \begin{equation*} |E \cap B_r(x)| \ge |E \cap B_{1}(x)| \ge c(n, \lambda, \beta, G) = \frac{c(n, \lambda, \beta, G)}{4}\,4 \ge c(n, \lambda, \beta, G) \, r. \end{equation*}
So we can assume that $r \ge 4$, in particular $N \ge 1$.
Since $E$ is indecomposable by \cref{teo:connesso}, for every $i = 0, \dots, N - 1$ there holds \begin{equation*} |E \cap (B_{3i + 3}(x) \setminus B_{3i + 2}(x))| > 0. \end{equation*} 
For every $i = 0, \dots, N - 1$, let \begin{equation*} y_i \in E \cap (B_{3i + 3}(x) \setminus B_{3i + 2}(x)). \end{equation*}
The balls $B_1(y_i)$, $i = 0, \dots N - 1$, are pairwise disjoint and, for a suitable choice of $y_i$, by \cref{lemma4.3:km} there exists $c(n, \lambda, \beta, G)$ such that $|E \cap B_{1}(y_i)| \ge c$ for $i = 0, \dots, N - 1$.
Finally
\begin{equation*} |E \cap B_r(x)| \ge \sum_{i = 0}^{N - 1} |E \cap B_{1}(y_i)| + |E \cap B_{1}(x)| \ge (N + 1) c \ge c \frac{r - 1}{3} \ge c \,r. \end{equation*} \end{proof}

Now we are ready to prove \cref{theorem:2}.
\begin{proof}[Proof of \cref{theorem:2}] \cref{lemma7.2:km} and \cref{lemma7:fn} easily imply \cref{theorem:2} for $\beta \in (0, 1]$ and mass $m$ sufficiently large.
Then it remains to consider $\beta \in (1, 2]$.
Let $E \subset \R^n \setminus H$ be a volume constrained minimizer for $\ff^\lambda$ with $|E|=m$. By \cref{lemma7.2:km}, for $m$ large enough, we can assume that ${\rm diam} \, E > 4$.
We observe first that
%since $E$ is bounded, then there exists a bounded set $A\subset \{(x,x)\in \R^n\times \R^n\}$ such that $A$ is a relatively open smooth set in the $n$-dimensional subspace $\{(x,x)\in \R^n\times \R^n\}$ and
% \[
% E\times E \subset \mathcal{C}_A:=\left\{ x + \sum_{i=1}^n t_i \nu_i \st x \in A, t_i \in \R\right\},
% \]
% where $\nu_1,\ldots, \nu_n$ is an orthonormal basis for the normal space $\{(x,x)\in \R^n\times \R^n\}^\perp$ in $\R^n\times \R^n$. In particular
\begin{equation}\label{eq:zzStimaIntornoCircaTubolare}
     \frac{1}{r^{\beta -1}}\left|\{(x, y) \in E \times E \st |x - y| < r\}\right| =
     \frac{1}{r^{\beta-1}} \int_E |E \cap B_r(x)| \de x \le \frac{\omega_n r^n}{r^{\beta-1}} |E| \xrightarrow[r\to0]{}0.
     %
     %\le \frac{1}{r^{\beta-1}} \left| \mathcal{C}_A \cap \{|x - y| < r\} \right| \le c(n,A) \frac{r^n}{r^{\beta-1}} 
\end{equation}
Applying the coarea formula on $\R^n\times \R^n$ for the Lipschitz function $f:\R^n\times \R^n\to \R$ given by $f(x,y):=|x-y|$, observing that $|\nabla f | = \sqrt{2}$ and that
\[
\frac{\d}{\d r} \left|\{(x, y) \in E \times E \st |x - y| < r\}\right| = \frac{1}{\sqrt{2}} \int_{\{(x,y) \in \R^n\times \R^n\st |x-y|=r\}} \chi_{E\times E}(x,y)\de \hh^{2n-1}(x,y) \quad\text{for a.e. $r>0$,} 
\]
and integrating by parts, we estimate
\begin{equation*} \begin{split} 
\int_E \int_E \frac{1}{|x - y|^{\beta - 1}}\de y \de x &
= \frac{1}{\sqrt{2}}
\int_{\R^n\times \R^n} \frac{\chi_{E\times E}(x,y)}{|x - y|^{\beta - 1}} |\nabla f| \de x \de y
\\
&= \frac{1}{\sqrt{2}} \int_0^{+\infty} \int_{\{(x,y) \in \R^n\times \R^n\st |x-y|=r\}} \frac{\chi_{E\times E}(x,y)}{r^{\beta-1}} \de \hh^{2n-1}(x,y) \de r
\\
&= \lim_{\epsilon\to0^+} \int_\epsilon^{+\infty} \frac{1}{r^{\beta-1}} \frac{\d}{\d r} \left|\{(x, y) \in E \times E \st |x - y| < r\}\right|  \de r
\\
&= \lim_{\epsilon\to0^+} -\frac{1}{\epsilon^{\beta -1}}\left|\{(x, y) \in E \times E \st |x - y| < \epsilon\}\right| +\\
&\qquad\qquad- \int_{\epsilon}^{+\infty} (1-\beta) \frac{1}{r^\beta} |\{(x, y) \in E \times E \st |x - y| < r\}|  \de r.
\end{split}
\end{equation*}
Exploiting \eqref{eq:zzStimaIntornoCircaTubolare} and \cref{lemma8:fn} we deduce
\begin{equation*}
    \begin{split}
      \int_E \int_E \frac{1}{|x - y|^{\beta - 1}}\de y \de x 
      &= (\beta - 1) \int_0^{+\infty} |\{(x, y) \in E \times E \st |x - y| < r\}| \frac{\de r}{r^\beta} \\ & = (\beta - 1) \int_0^{+\infty} \int_E \frac{|E \cap B_r(x)|}{r^\beta} \de x \de r \\ & \ge (\beta - 1) \int_{1}^{\frac{{\rm diam} \, E}{2}} \int_E \frac{|E \cap B_r(x)|}{r^\beta} \de x \de r \\ & \ge c \int_{1}^{\frac{{\rm diam} \, E}{2}} \frac{|E|}{r^{\beta - 1}} \de r. \end{split} \end{equation*}
The final right-hand side in the previous chain of inequalities is bounded from below by $c\; |E|\; ({\rm diam} \, E)^{2 - \beta}$ if $\beta < 2$, and by $c\;|E|\;\log({\rm diam} \, E)$ if $\beta = 2$.
Combining this bounds with \cref{lemma7:fn}, we get a contradiction for $|E|$ large enough.
\end{proof}

\subsection{Absence of holes in minimizers}

As a corollary of the estimates proved in the last section, we prove here a further qualitative property of volume constrained minimizers of $\ff^\lambda$. The next theorem essentially tells that volume constrained minimizers of $\ff^\lambda$ do not have ``interior holes''.

\begin{theorem}\label{teo:nobuchi}
Let $g$ be $\rr$-admissible, $0$-growing, infinitesimal and symmetric and let $G$ be $\g$-admissible and coercive.
There exists $\bar m > 0$, depending on $n$, $\lambda$, $g$ and $G$, such that, for all $m \in (0, \bar m)$, every minimizer $E$ of $\ff^\lambda$ with $|E| = m$ has the following property. There is no set $F \subset \R^n \setminus (H \cup E)$ with $|F| > 0$ such that \begin{equation}\label{np:4.4} P_\lambda(E) = P_\lambda(E \cup F) + P(F, \R^n \setminus H) + \lambda \hh^{n - 1}(\partial^*F \cap \partial H). \end{equation} \end{theorem}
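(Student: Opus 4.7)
The plan is to argue by contradiction. Fix $\bar m>0$ small, to be chosen, and assume there exists a minimizer $E\in\mathcal{A}_m$, $m<\bar m$, admitting a nontrivial set $F\subset\R^n\setminus(H\cup E)$ satisfying~\eqref{np:4.4}. Setting $E_+:=E\cup F$, so that $|E_+|=m+|F|$, the defining identity rewrites as
\[
P_\lambda(E)-P_\lambda(E_+)=P(F,\R^n\setminus H)+\lambda\,\hh^{n-1}(\partial^*F\cap\partial H)=P_{-\lambda}(F),
\]
and applying the sharp isoperimetric inequality~\eqref{isoperimetric:inequality} with parameter $-\lambda\in(-1,1)$ yields the quantitative capillarity gain
\[
P_\lambda(E)-P_\lambda(E_+)\,\ge\, n\,|B^{-\lambda}|^{1/n}\,|F|^{(n-1)/n}.
\]

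Next, I would estimate how much the nonlocal and gravity contributions grow in passing from $E$ to $E_+$. Since~\eqref{np:4.4} forces $\partial^*F\subset\partial^*E$ up to $\hh^{n-1}$-null sets, $F$ must lie in a bounded component of the complement of $E$ in $\R^n\setminus H$. For $m<\bar m$ small, \cref{lemma5.1:km} may be used to localize (a suitable modification of) $E$ inside a small cube $\bar Q_r$ with $r\sim m^{1/n}$, while the minimizer-independent vertical bound $E\subset\{0<x_n\le\bar x_n\}$ provided by \cref{vertical:boundedness2} (which rests on \cref{lemma4.3:km} and on coercivity of $G$) confines $E_+$ in a fixed bounded region. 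Together with the cubical decomposition technique of \cref{lemma2.3:np}, this yields the linear bounds
\[
\rr(E_+)-\rr(E)=\rr(F)+2\rr(E,F)\le C_1\,|F|,\qquad \g(E_+)-\g(E)=\g(F)\le C_2\,|F|,
\]
with $C_1,C_2$ depending only on $n,g,G$.

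To recover a competitor of mass exactly $m$ I would rescale $\tilde E:=\beta E_+$ with $\beta:=(m/(m+|F|))^{1/n}\in(0,1]$. The crucial preliminary observation is that combining the filling gain with the upper bound $P_\lambda(E)\le\ff^\lambda(E)\le\ff^\lambda(B^\lambda(m))\lesssim m^{(n-1)/n}$ (from \cref{lemma2.3:np}) and the lower bound $P_\lambda(E_+)\ge n\,|B^\lambda|^{1/n}(m+|F|)^{(n-1)/n}$, one deduces via Taylor expansion that $|F|\le c\,m^{n/(n-1)}$; in particular $|F|/m\to 0$ and $\beta\to 1$ as $m\to 0$. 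Exploiting $P_\lambda(\tilde E)=\beta^{n-1}P_\lambda(E_+)$, the estimates of Step~2 and analogous controls for $\rr(\tilde E)-\rr(E_+)$ and $\g(\tilde E)-\g(E_+)$ (both vanishing as $\beta\to 1$), one reaches
\[
\ff^\lambda(\tilde E)-\ff^\lambda(E)\,\le\, -c(n,\lambda)\,|F|^{(n-1)/n}+o(|F|^{(n-1)/n})
\]
as $m\to 0$, contradicting the minimality of $E$ for $\bar m$ sufficiently small.

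The main technical hurdle is the quantitative control of $\rr(\tilde E)-\rr(E_+)$ under the downward rescaling $\beta\le 1$: the $0$-decreasing hypothesis only gives $g(\beta z)\ge g(z)$, which has the wrong sign for an elementary upper bound. I plan to overcome this by adapting the cubical decomposition of \cref{lemma2.3:np}, using that $E_+$ is confined in a cube of side $\sim(m+|F|)^{1/n}\sim m^{1/n}$, and then comparing $\rr(\tilde E)$ with $\rr(E_+)$ by writing the difference as an integral on the symmetric difference and absorbing the kernel contributions via local integrability of $g$ (from $\rr$-admissibility combined with infinitesimality), which works provided $|F|/m$ is small enough, i.e.\ provided $\bar m$ is chosen sufficiently small.
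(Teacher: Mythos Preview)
Your overall architecture---fill the hole, show the energy drops, then restore the volume constraint to contradict minimality---is exactly the paper's strategy, and several of your intermediate steps are correct and even sharper than the paper's (your identity $P_\lambda(E)-P_\lambda(E_+)=P_{-\lambda}(F)$ is cleaner than the paper's use of the relative isoperimetric inequality). The bound $|F|\le c\,m^{n/(n-1)}$, the use of \cref{vertical:boundedness2} for a minimizer-independent vertical confinement, and the conclusion $\partial^*F\subset\partial^*E$ up to $\hh^{n-1}$-null sets (which the paper proves via the decomposition $\Sigma_F=0$) are all ingredients the paper uses too.

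The genuine gap is in your final step, and you have essentially diagnosed it yourself. Restoring the mass by a \emph{downward rescaling} $\tilde E=\beta E_+$ with $\beta<1$ is the wrong tool here: for a $0$-decreasing kernel one only has $g(\beta z)\ge g(z)$, and the same one-sided growth condition on $G$ gives $G(\beta t)\ge\beta^n G(t)$, so neither $\rr(\tilde E)-\rr(E_+)$ nor $\g(\tilde E)-\g(E_+)$ admits an elementary upper bound. Your proposed rescue---confining $E_+$ in a cube of side $\sim m^{1/n}$ via \cref{lemma5.1:km} and then estimating on the symmetric difference---does not work as stated: \cref{lemma5.1:km} produces a \emph{competitor} inside $\bar Q_1$, not a localization of the minimizer $E$ itself, so you have no horizontal confinement of $E_+$; and even with confinement, $\tilde E$ and $E_+$ differ by a global dilation, not a local perturbation, so an integral over $\tilde E\Delta E_+$ does not isolate a small contribution without further regularity of $g$ that you have not assumed.

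The paper avoids this entirely by a different volume-restoration device: it takes $D:=(E\cup F)\cap\{x_n<t\}$ with $t$ chosen so that $|D|=m$. Since $D\subset E\cup F$, one gets $\rr(D)\le\rr(E\cup F)$ and $\g(D)\le\g(E\cup F)$ for free by positivity of $g$ and $G$, and a one-line divergence/projection argument (integrate the constant field $e_n$ over $(E\cup F)\cap\{x_n>t\}$) gives $\hh^{n-1}((E\cup F)^{(1)}\cap\{x_n=t\})\le P(E\cup F,\{x_n>t\})$, hence $P_\lambda(D)\le P_\lambda(E\cup F)$. This horizontal cut is the missing idea; with it the contradiction $\ff^\lambda(D)<\ff^\lambda(E)$ follows immediately from the inequality $\ff^\lambda(E\cup F)<\ff^\lambda(E)$ that you already established. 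For the linear bound on $\rr(E\cup F)-\rr(E)$ the paper invokes the general estimate $\rr(L,M)\le|L|\,\phi(|M|)$ from \cite[Lemma~3.5]{NovagaPratelli}, which requires no spatial confinement of $E$ at all.
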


We begin with a preparatory lemma.

\begin{lemma}\label{vertical:boundedness2}
Let $g$ be $\rr$-admissible, $q$-growing, infinitesimal and symmetric and let $G$ be $\g$-admissible and coercive.
There exist $\bar m > 0$ and $\bar T > 0$, depending on $n$, $\lambda$, $g$, $G$, $q$ such that, for all $m \in (0, \bar m)$, every volume constrained minimizer $E$ of $\ff^\lambda$ with $|E| = m$ satisfies
\begin{equation*} |E \cap \{x_n > \bar T\}| = 0. \end{equation*}
\end{lemma}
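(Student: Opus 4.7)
The plan is to combine the minimality of $E$, which yields a universal upper bound on $\g(E)$, with the uniform density estimate of \cref{lemma4.3:km} and the coercivity of $G$.

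The first step is a universal upper bound on $\g(E)$. Comparing $E$ with the bubble $B^\lambda(m)$ via minimality, and using the isoperimetric inequality~\eqref{isoperimetric:inequality} together with $\rr(E)\ge 0$, one rearranges $\ff^\lambda(E)\le \ff^\lambda(B^\lambda(m))$ to obtain
\[
\g(E)\le \g(B^\lambda(m))+\rr(B^\lambda(m))\le c_1\, m,
\]
the last inequality coming from \cref{lemma2.3:np} whenever $m\le |B^\lambda|$, with $c_1=c_1(n,\lambda,g,G)$ independent of $E$.

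Next, suppose by contradiction that $|E\cap\{x_n>T\}|>0$ for some $T>1$. Pick a density-one point $x\in E$ with $x_n>T$ at which the conclusion of \cref{lemma4.3:km} holds, so that $|E\cap B_1(x)|\ge c_2\, m$ whenever $m<1$, with $c_2=c_2(n,\lambda,g,G,q)$. Since $B_1(x)\subset\{x_n>T-1\}$, introducing the monotone profile $\phi(s):=\inf_{t>s}G(t)$ yields
\[
c_1\, m\ge \g(E)\ge \int_{E\cap B_1(x)}G(y_n)\de y\ge c_2\, m\, \phi(T-1),
\]
hence $\phi(T-1)\le c_1/c_2$. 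By the coercivity of $G$ one has $\phi(s)\to +\infty$ as $s\to +\infty$, so there exists $\bar s=\bar s(n,\lambda,g,G,q)$ with $\phi(s)>c_1/c_2$ for $s>\bar s$. Therefore $T\le \bar s+1$, and setting $\bar T:=\bar s+1$ together with any $\bar m\le \min\{|B^\lambda|,1\}$ not exceeding the existence threshold provided by \cref{simple:existence} concludes the proof.

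The whole point, and the only substantive obstacle, is securing constants independent of the minimizer $E$. In \cref{vertical:boundedness} this failed because the quasiminimality approach yielded an $E$-dependent density; here the improvement is exactly \cref{lemma4.3:km}, whose $q$-decreasing hypothesis on $g$ explains why that assumption appears in the present statement. Coercivity of $G$ then enters only at the very end, to convert the universal numerical bound $\phi(T-1)\le c_1/c_2$ into a uniform vertical threshold $\bar T$ depending only on $n,\lambda,g,G,q$.
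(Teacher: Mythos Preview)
Your proof is correct and follows essentially the same approach as the paper: compare $E$ with the bubble $B^\lambda(m)$ to obtain a universal upper bound, invoke the uniform density estimate \cref{lemma4.3:km} at a point high up in $E$ to produce a competing lower bound involving $G$, and then use coercivity of $G$ to extract a uniform vertical threshold. The only cosmetic difference is that the paper first appeals to \cref{vertical:boundedness} to define a finite $\bar T_E$ and works with a point $x_E$ satisfying $(x_E)_n\ge \tfrac12 \bar T_E$, whereas you argue directly by contradiction on an arbitrary $T$; your formulation is slightly more streamlined but not a genuinely different route.
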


\begin{proof} By \cref{vertical:boundedness} there exists $\bar T_E < \infty$, depending on $n$, $\lambda$, $g$, $G$ and $E$ with \begin{equation*} \bar T_E := \sup \{ t \st |E \cap \{x_n>t\}| >0 \}. \end{equation*}
Let $x_E \in E$ such that \begin{equation*} (x_E)_n \ge \frac{1}{2} \bar T_E. \end{equation*}
By \cref{lemma4.3:km} there exists $c = c(n, \lambda, g, G, q) > 0$ such that, if $\bar m < 1$, then
\begin{equation*} |E \cap B_1(x_E)| \ge c \,m. \end{equation*}
Therefore
\begin{equation}\label{limitatezza:1} \ff^\lambda(E) \ge P_\lambda(E) + \int_{E \cap B_1(x_E)} G \ge
%P_\lambda(E) 
P_\lambda(B^\lambda(m))
+ c m \inf_{((x_E)_n-1, (x_E)_n+1)} G. \end{equation}
On the other hand, by \cref{lemma2.3:np}, if $\bar m \le |B^\lambda|$ we have \begin{equation}\label{limitatezza:2} \ff^\lambda(E) \le \ff^\lambda(B^\lambda(m)) \le P_\lambda(B^\lambda(m)) %\le P_\lambda(E) 
+ c(n, \lambda, g, G) m. \end{equation}
Putting together~\eqref{limitatezza:1} and~\eqref{limitatezza:2} we obtain \begin{equation*} \inf_{((x_E)_n-1, (x_E)_n+1)} G \le c(n, \lambda, g, G, q). \end{equation*}
Since $G$ is coercive, then $(x_E)_n$, and in particular also $\bar T_E$, is bounded by a constant independent of $E$, and we conclude the proof.
\end{proof}
\begin{remark} We remark that \cref{vertical:boundedness2} is a stronger result than \cref{vertical:boundedness}.
Indeed, the bound in \cref{vertical:boundedness2} does not depend on the minimizer.
At the same time, \cref{vertical:boundedness} does not require that $g$ is $q$-growing and $G$ is coercive. \end{remark}

Now we are ready to prove \cref{teo:nobuchi}.

\begin{proof}[Proof of \cref{teo:nobuchi}] If $\bar m$ is sufficiently small, \cref{vertical:boundedness2} guarantees that there exists $\bar T > 0$, depending on $n$, $\lambda$, $g$, $G$, such that \begin{equation*} |E \cap \{x_n > \bar T\}| = 0. \end{equation*} 
Assume that there exists a set $F \subset \R^n \setminus (H \cup E)$ with $v := |F| > 0$ and such that~\eqref{np:4.4} holds.
We aim to find a contradiction if $\bar m$ is chosen suitably small.
Let $\bar m \le |B^\lambda|$.
By the minimality of $E$, the isoperimetric inequality~\eqref{isoperimetric:inequality}, the relative isoperimetric inequality outside convex sets \cite{ChoeGhomiRitoreInequality, FuscoMorini} and since $P_\lambda(B^\lambda(m)) = n |B^\lambda|^{\frac{1}{n}} m^{\frac{n - 1}{n}}$ \cite[Lemma 3.1]{PascalePozzettaQuantitative}, we find \begin{equation*} \begin{split} n|B^\lambda|^{\frac{1}{n}} m^{\frac{n - 1}{n}} + \rr(B^\lambda (m)) + \g(B^\lambda (m)) & = \ff^\lambda(B^\lambda (m)) \ge \ff^\lambda(E) \ge P_\lambda(E) \\ & = P_\lambda(E \cup F) + P(F, \R^n \setminus H) + \lambda \hh^{n - 1}(\partial^*F \cap \partial H) \\ & \ge P_\lambda(E \cup F) + (1 - |\lambda|) P(F, \R^n \setminus H) \\ & \ge n |B^\lambda|^{\frac{1}{n}} (m + v)^{\frac{n - 1}{n}} + (1 - |\lambda|)\, n \left(\frac{\omega_n}{2}\right)^{\frac{1}{n}}v^{\frac{n - 1}{n}} \\ & \ge n |B^\lambda|^{\frac{1}{n}} m^{\frac{n - 1}{n}} + (1 - |\lambda|)\, n \left(\frac{\omega_n}{2}\right)^{\frac{1}{n}}v^{\frac{n - 1}{n}} \end{split} \end{equation*}
which gives, by \cref{lemma2.3:np}, \begin{equation}\label{np:4.5} v \le \left(\frac{\rr(B^\lambda (m)) + \g(B^\lambda (m))}{(1 - |\lambda|) \, n \sqrt[n]{\frac{\omega_n}{2}}}\right)^{\frac{n}{n - 1}} \le c(n, \lambda, g, G) \left(\frac{m}{(1 - |\lambda|) \, n \sqrt[n]{\frac{\omega_n}{2}}}\right)^{\frac{n}{n - 1}}. \end{equation}
Since $\bar m < c(n, \lambda, g, G)$, also $v$ is bounded by a suitable $\bar v(n, \lambda, g, G)$. 
By \cite[Lemma 3.5]{NovagaPratelli} there exists a continuous and increasing function $\phi : (0, \infty) \to (0, \infty)$, with $\phi(0) = 0$, such that for every two sets $F_1$, $F_2 \subset \R^n \setminus H$ one has \begin{equation*} \rr(F_1, F_2) \le |F_1| \, \phi(|F_2|). \end{equation*}
Then \begin{equation*} \begin{split} \rr(E \cup F) - \rr(E) & = \rr(F, F) + 2\rr(F, E) \le v\, \phi(\bar v) + 2 v\, \phi(\bar m) \le c(n, \lambda, g, G) v. \end{split} \end{equation*}
Let us prove that also $F$ is essentially contained in $\{0 < x_n \le \bar T\}$. 
To this end, assume by contradiction that \begin{equation}\label{contradiction:misura} \hh^{n - 1}\left(\partial^*F \setminus \left(H \cup \partial^*E\right)\right) > 0. \end{equation}
Let us denote \begin{equation*} \begin{split} & \Sigma_E := \hh^{n - 1}\left(\partial^*E \setminus \left(H \cup \partial^*F\right)\right) \\ & \Sigma := \hh^{n - 1}\left(\left(\partial^*E \cap \partial^*F\right) \setminus H\right) \\ & \Sigma_F := \hh^{n - 1}\left(\partial^*F \setminus \left(H \cup \partial^*E\right)\right) \\ & \Theta_E :=  \hh^{n - 1}\left(\partial^*E \cap \partial H\right) \\ & \Theta_F :=  \hh^{n - 1}\left(\partial^*F \cap \partial H\right). \end{split} \end{equation*}
By~\eqref{np:4.4} we obtain \begin{equation*} \begin{split} \Sigma_E + \Sigma - \lambda \Theta_E & = P_\lambda(E) \\ & = P_\lambda(E \cup F) + P(F, \R^n \setminus H) + \lambda \Theta_F \\ & = \Sigma_E + \Sigma_F - \lambda \Theta_E - \lambda \Theta_F + \Sigma + \Sigma_F + \lambda\Theta_F \\ & = \Sigma_E + 2\Sigma_F  + \Sigma - \lambda \Theta_E. \end{split} \end{equation*}
In particular, we get $\Sigma_F = 0$, contradicting~\eqref{contradiction:misura}.
Therefore $F \subset \{0 < x_n \le \bar T\}$ and we also deduce
\begin{equation}\label{eq:zzBoundG}
\g(E \cup F) - \g(E) = \g(F) \le \int_F \sup_{(0, \bar T)} G\de x = c(n, \lambda, g, G)\, v. \end{equation}
By~\eqref{np:4.4}, we obtain \begin{equation}\label{4.6:np} \begin{split} \ff^\lambda(E \cup F) & = P_\lambda(E \cup F) + \rr(E \cup F) + \g(E \cup F) \\ & = P_\lambda(E) - P(F, \R^n \setminus H) - \lambda \hh^{n - 1}(\partial^*F \cap \partial H) + \rr(E \cup F) + \g(E \cup F) \\ & \le P_\lambda(E) - (1 - |\lambda|) P(F, \R^n \setminus H) + \rr(E \cup F) + \g(E \cup F) \\ & \le P_\lambda(E) - (1 - |\lambda|)\, n \left(\frac{\omega_n}{2}\right)^{\frac{1}{n}} v^{\frac{n - 1}{n}} + \rr(E) + c \,v + \g(E) + c \, v \\ & = \ff^\lambda(E) - (1 - |\lambda|)\, n \left(\frac{\omega_n}{2}\right)^{\frac{1}{n}} v^{\frac{n - 1}{n}} + c\,v < \ff^\lambda(E), \end{split} \end{equation} where the last inequality holds if $v$ is sufficiently small, hence by~\eqref{np:4.5} as soon as $\bar m$ is sufficiently small. \newline
Let $t \in (0, \infty)$ be such that, if $D = \left\{x \in E \cup F, x_n < t\right\}$, then $|D| = m$.
Clearly $\ff^\lambda(D) \le \ff^\lambda(E\cup F)$, hence \eqref{4.6:np} implies  $\ff^\lambda(D) < \ff^\lambda(E)$, contradicting the minimality of $E$.
%
% Note that
% \begin{equation*} \partial^*D \setminus \partial^*(E \cup F) \subset \R^{n - 1} \times \{t\}. \end{equation*}
% Moreover, for every $x' \in \R^{n - 1}$ such that $(x', t) \in \partial^*D$ there exists $x_n \ge t$ such that $(x', x_n) \in \partial^*(E \cup F)$.
% Thus $P_\lambda(D) \le P_\lambda (E \cup F)$.
% At the same time, $\rr(D) \le \rr(E \cup F)$ and $\g(D) \le \g(E \cup F)$ since $D \subset E \cup F$.
% Hence, by~\eqref{4.6:np} we get \begin{equation*} \ff^\lambda(D) \le \ff^\lambda(E \cup F) < \ff^\lambda(E), \end{equation*} which is a contradiction since $E$ is a minimizer.
\end{proof}

\begin{remark}
Note that if the function $G$ were globally bounded, \cref{teo:nobuchi} could be easily extended to minimizers of $\ff^\lambda$ in the class \begin{equation*} \mathcal{A}_{m} = \{\Omega \subset \R^n \setminus H \, {\rm measurable} \st |\Omega| = m\} \end{equation*}
{\em for every} mass $m > 0$.\\
Indeed, in the proof of \cref{teo:nobuchi} we exploited \cref{vertical:boundedness2} just to get the estimate \eqref{eq:zzBoundG}, which is trivial in case $G$ were assumed to be globally bounded.
\end{remark}

Now we are ready to complete the proof of \cref{theorem:1}.

\begin{proof}[Proof of \cref{theorem:1}] \cref{theorem:1} follows by \cref{simple:existence}, \cref{remark:quantitative}, \cref{prop:existence}, \cref{teo:limitato}, \cref{teo:connesso} and \cref{teo:nobuchi}.
\end{proof}

\section{Generalized minimizers}\label{sec:5}

Let us give the following definition.

\begin{definition}\label{def:EpsilonEnergy} If $E \subset \R^n \setminus H$ is a measurable set, $g$ is a $\rr$-admissible function, $G$ is a $\rr$-admissible function and $\epsilon_1$, $\epsilon_2 > 0$, we define the functional \begin{equation*} \begin{split} \ff^\lambda_\epsilon(E) &:= P_\lambda(E) + \epsilon_1\rr(E) + \epsilon_2 \g(E) \\ &= P_\lambda(E) + \epsilon_1 \int_E \int_E g(y - x) \de y \de x + \epsilon_2 \int_E G(x_n) \de x. \end{split} \end{equation*} \end{definition}

\begin{remark}\label{remark:scaling}
We remark that minimizing the functional $\ff^\lambda$ in the small mass regime is equivalent to minimizing the functional $\ff^\lambda_\epsilon$ for $\epsilon_1$, $\epsilon_2$ small and among sets of a fixed volume.
Indeed, let for instance $|E| = |B^\lambda|$, if $m > 0$ and $\bar \epsilon := \frac{m^{\frac{1}{n}}}{|B^\lambda|^{\frac{1}{n}}}$, then $\tilde E := \bar \epsilon E$ has volume $m$ and by scaling we have
\begin{equation*} \ff^\lambda(\tilde E) = \bar\epsilon^{n - 1} \left(P_\lambda(E) + \bar\epsilon^{n + 1} \int_E\int_E g(\bar\epsilon(y - x)) \de y \de x + \bar\epsilon \int_E G(\bar\epsilon x_n)\de x\right). \end{equation*}

In particular we deduce that 

\begin{itemize}
\item for every $\rr$-admissible function $g_1$, $\g$-admissible function $G_1$ and $m > 0$, there exist $\bar\epsilon > 0$, a $\rr$-admissible function $g_2$ and a $\g$-admissible function $G_2$ such that, if 
\begin{equation*} \ff^\lambda(E) = P_\lambda(E) + \int_E \int_E g_1(y - x) \de y \de x + \int_E G_1(x_n) \de x \end{equation*}
and
\begin{equation*} \ff^\lambda_{\bar\epsilon}(E) = P_\lambda(E) + \bar\epsilon^{n + 1} \int_E\int_E g_2(y - x) \de y \de x + \bar\epsilon \int_E G_2(x_n)\de x, \end{equation*}
then $\inf_{|E| = m} \ff^\lambda(E)$ is proportional to $\inf_{|E| = |B^\lambda|} \ff^\lambda_{\bar\epsilon}(E)$ and the variational problems are equivalent.
\item for every $\rr$-admissible function $g_2$, $\g$-admissible function $G_2$, $\epsilon_1, \epsilon_2 > 0$ and $m > 0$, there exist a $\rr$-ad\-mis\-si\-ble function $g_1$ and a $\g$-admissible function $G_1$ such that, if 
\begin{equation*} \ff^\lambda(E) = P_\lambda(E) + \int_E \int_E g_1(y - x) \de y \de x + \int_E G_1(x_n) \de x \end{equation*}
and
\begin{equation*} \ff^\lambda_\epsilon(E) = P_\lambda(E) + \epsilon_1 \int_E\int_E g_2(y - x) \de y \de x + \epsilon_2 \int_E G_2(x_n)\de x, \end{equation*}
then $\inf_{|E| = |B^\lambda|} \ff^\lambda_\epsilon(E)$ is proportional to $\inf_{|E| = m} \ff^\lambda(E)$ and the variational problems are equivalent.
\end{itemize}
\end{remark}
\vspace{2mm}

From now on for the rest of the section, we assume that $\epsilon_1, \epsilon_2>0$ and $g, G$ as in \cref{def:EpsilonEnergy} are given. 
Hence we also define the generalized energy corresponding to $\ff^\lambda_\epsilon$ as
\begin{equation*} \label{generalized:energy} \tilde\ff_{\epsilon}^\lambda(E) := \inf_{h \in \N}\tilde\ff^\lambda_{\epsilon, h}(E), \end{equation*} where \begin{equation*} \tilde\ff^\lambda_{\epsilon, h}(E) := \inf \left\{\sum_{i = 1}^h \ff^\lambda_{\epsilon}(E^i) \st E = \bigcup_{i = 1}^h E^i, E^i \cap E^j = \emptyset \quad {\rm for} \; 1 \le i \neq j \le h\right\}. \end{equation*}
\vspace{2mm}
The goal of this Section is to prove the following version of \cref{theorem:3}, suitably modified for the functional $\ff^\lambda_\epsilon$.
\begin{theorem}\label{generalized:existence} Let $g$ be $\rr$-admissible and $q$-growing and let $G$ be $\g$-admissible.
For every $\epsilon_1, \epsilon_2 > 0$ there exists a minimizer of $\tilde\ff^\lambda_\epsilon$ in the class \begin{equation*} \mathcal{A} := \left\{\Omega \subset \R^n\setminus H \; {\rm measurable} \st |\Omega| = |B^\lambda|\right\}. \end{equation*} 
More precisely, there exist a set $E \in \mathcal{A}$ and a subdivision $E = \cup_{j = 1}^h E^j$, with pairwise disjoint sets $E^j$, such that \begin{equation*} \tilde\ff^\lambda_\epsilon(E) = \sum _{j = 1}^h \ff^\lambda_\epsilon(E^j) = \inf \left\{\tilde\ff^\lambda_\epsilon(\Omega) \st \Omega \in \mathcal{A}\right\}. \end{equation*} 
Moreover, for every $1 \le j \le h$, the set $E^{j}$ is a minimizer of both the standard and the generalized energy for its volume, i.e. \begin{equation}\label{1.3:np} \tilde\ff^\lambda_\epsilon(E^{j}) = \ff^\lambda_\epsilon(E^{j}) = \min \left\{\tilde\ff^\lambda_\epsilon(\Omega) \st \Omega \subset \R^n\setminus H, \, |\Omega| = |E^{j}|\right\}. \end{equation} \end{theorem}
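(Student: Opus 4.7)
My plan is to adapt the concentration-compactness strategy of Novaga--Pratelli to the present capillarity setting, systematically exploiting the fact that horizontal translations (parallel to $\partial H$) leave $P_\lambda$, $\rr$ and $\g$ simultaneously invariant. Set $\mathscr{F}_* := \inf_{\mathcal{A}} \tilde{\mathscr{F}}^\lambda_\epsilon$, which is finite since it is bounded above by $\mathscr{F}^\lambda_\epsilon(B^\lambda(|B^\lambda|))$. Choose a minimizing sequence $E_k \in \mathcal{A}$ together with partitions $E_k = \bigcup_{i=1}^{h_k} E_k^i$ (with pairwise disjoint $E_k^i$) such that $\sum_i \mathscr{F}^\lambda_\epsilon(E_k^i) \to \mathscr{F}_*$. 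Applying the isoperimetric inequality \eqref{isoperimetric:inequality} on each piece yields a uniform bound $\sum_i |E_k^i|^{(n-1)/n} \le C$, which forces the cardinality of $\{i : |E_k^i| \ge \delta\}$ to be at most $C\delta^{-(n-1)/n}$ for every $\delta>0$, and shows that the mass carried by components of volume less than $\delta$ is at most $\delta^{1/n}\sum_i |E_k^i|^{(n-1)/n} \le C\delta^{1/n}$.

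Reorder each partition by decreasing volume and extract a subsequence so that, for every $\delta>0$, the count $h(\delta)$ of components of volume $\ge \delta$ stabilizes and $|E_k^i| \to m_i \ge \delta$ for $i\le h(\delta)$. Next I perform a bounded-diameter reduction on each large component: by coarea applied to a family of balls centered at a chosen point of $E_k^i$, I select a radius $r_k^i$ for which $\hh^{n-1}(\partial B_{r_k^i}\cap E_k^i)$ is dominated by a prescribed fraction of $P(E_k^i)$, split $E_k^i$ along this sphere, and declare the outer piece a new component of the partition. Because horizontal translations preserve all three energy terms, the detached outer piece can be moved arbitrarily far horizontally; iterating, I may assume without increasing $\sum_i \mathscr{F}^\lambda_\epsilon(E_k^i)$ that each large component has diameter uniformly bounded in $k$, and after a further horizontal translation that it is supported in a fixed region along the directions parallel to $\partial H$. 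Standard BV compactness then extracts limits $E^j$ for $j \le h(\delta)$, with $|E^j|=m_j$, and lower semicontinuity of $P_\lambda$ (\cite[Lemma~3.9]{PascalePozzettaQuantitative}) combined with $L^1$-continuity of $\rr$ and $\g$ (using $\rr$-admissibility and \cref{global:growthbis}) yields $\sum_j \mathscr{F}^\lambda_\epsilon(E^j) \le \liminf_k \sum_i \mathscr{F}^\lambda_\epsilon(E_k^i) = \mathscr{F}_*$.

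To ensure $\sum_j m_j = |B^\lambda|$ I use a diagonal argument, letting $\delta\to 0$ while taking $k\to\infty$; any residual mass $\mu \ge 0$ lost to small components is reinstated by appending an extra bubble $B^\lambda(\mu,x_\infty)$ placed horizontally at infinity. Its energy contribution is at most $c\mu^{(n-1)/n}$ by \cref{lemma2.3:np}, which matches the lower bound $c\mu^{(n-1)/n}$ for the total energy of the small pieces obtained from subadditivity of $t\mapsto t^{(n-1)/n}$; thus the absorption does not increase the overall energy in the limit. Defining $E:=\bigcup_{j=1}^h E^j$ with the components translated to be mutually disjoint gives $E\in\mathcal A$ and $\tilde{\mathscr{F}}^\lambda_\epsilon(E) \le \sum_j \mathscr{F}^\lambda_\epsilon(E^j) \le \mathscr{F}_*$. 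The refined assertion \eqref{1.3:np} follows from a swap argument: if some $E^j$ were not a minimizer of $\tilde{\mathscr{F}}^\lambda_\epsilon$ in its own volume class, replacing $E^j$ with a better competitor placed horizontally far from the remaining $E^i$ would strictly decrease $\tilde{\mathscr{F}}^\lambda_\epsilon(E)$; and for a single component translated to horizontal infinity the generalized and standard energies coincide, so the same is true for $\mathscr{F}^\lambda_\epsilon$.

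The main obstacle is the bounded-diameter reduction in the second step: one must choose the cutting radius so that the extra boundary $\hh^{n-1}(\partial B_{r_k^i} \cap E_k^i)$ is genuinely controlled in terms of the perimeter removed, and the subsequent spreading of pieces must be strictly horizontal, since the absence of a coercivity assumption on $G$ allows individual components to extend far vertically and any vertical translation would alter $\g$. Reconciling these two requirements — cutting along arbitrary spheres in $\R^n$ but displacing only along $\partial H$ — is precisely the modification of the Novaga--Pratelli strategy that the capillarity/gravity setup demands.
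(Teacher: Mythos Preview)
Your overall architecture---concentration-compactness \`a la Novaga--Pratelli, horizontal-translation invariance, lower semicontinuity of $P_\lambda$ plus $L^1$-continuity of $\rr,\g$, and the swap argument for \eqref{1.3:np}---matches the paper's. The gap is exactly where you locate it: the bounded-diameter reduction is not proved, and the mechanism you propose (iterated spherical cuts) does not work as stated. A spherical cut bounds only the inner piece and \emph{increases} the partition energy by $2\hh^{n-1}(\partial B_r\cap E)$; iterating produces an uncontrolled number of components with no reason for the accumulated extra boundary to be $o(1)$. The paper's route is different: it cuts along coordinate hyperplanes (\cref{lemma3.2:np}, and \cref{lemma3.2bis:np} for the slab touching $\partial H$), using an ODE comparison on the slice-area function to locate sections so small that removing the intervening slab \emph{decreases} $\ff^\lambda_\epsilon$ by at least $\tfrac12 n|B^\lambda|^{1/n}m^{(n-1)/n}$. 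Doing this in each direction leaves at most $(M+1)^n$ pieces of width $\le 2L$ (\cref{lemma3.3:np}), with $M,L$ depending only on the data.

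A second, related issue: the mass removed in this cutting is not reinstated by appending a bubble. Your bubble carries energy $n|B^\lambda|^{1/n}\mu^{(n-1)/n}+\epsilon_1\rr(B^\lambda(\mu))+\epsilon_2\g(B^\lambda(\mu))$, while the small pieces you discard are only known to contribute $\ge n|B^\lambda|^{1/n}\mu^{(n-1)/n}$; the $\rr,\g$ surplus on the bubble is uncontrolled, so the swap need not be energy-nonincreasing. The paper instead \emph{rescales} the surviving pieces by the factor $(m/(m-\mu))^{1/n}$ and controls the energy increase via \cref{lemma3.1:npbis}, which is precisely where the $q$-decreasing hypothesis on $g$ is used---a hypothesis your outline never invokes. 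The same rescaling device (not a bubble) is what the paper uses to reduce to a fixed number $h'$ of pieces in its first step.
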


Note that an analogous version of \cref{lemma3.1:np} holds.
\begin{lemma}\label{lemma3.1:npbis} Let $E \subset \R^n \setminus H$ be a set of finite perimeter. 
Let $g$ be $\rr$-admissible and $q$-growing and let $G$ be $\g$-admissible. 
If $\alpha > 1$, then \begin{equation*} \ff^\lambda_\epsilon(\alpha E) \le \alpha^{2n + q} \ff^\lambda_\epsilon(E). \end{equation*} \end{lemma}

We begin by proving some preparatory lemmas.
The next geometric lemma allows to modify an excessively long and thin set decreasing its energy.
\begin{lemma}\label{lemma3.2:np} Let $g$ be $\rr$-admissible and $G$ be $\g$-admissible. 
For every $\bar m >0$ there exists $L(n, \lambda, \bar m) > 0$ such that the following holds. 
Let $E \subset \R^n \setminus H$, and let $a < b$ be two numbers with $b > a + 2 L$ and such that \begin{equation*} \left|\left\{x \in E \st a \le x_1 \le b\right\}\right| < \bar m. \end{equation*}
Then there exist two numbers $a^+ \in [a, a + L]$ and $b^- \in [b - L, b]$ such that, denoting $E^- = E \setminus ([a^+, b^-] \times \R^{n - 2}$ $\times$ $(0, \infty))$ and $m = |E| - |E^-| < \bar m$, one has \begin{equation}\label{3.1:np} \ff_\epsilon^\lambda(E^-) \le \ff_\epsilon^\lambda(E) - \frac{1}{2} n |B^\lambda|^{\frac{1}{n}} m^{\frac{n - 1}{n}}. \end{equation} \end{lemma}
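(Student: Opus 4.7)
The plan is to cut $E$ along two well-chosen hyperplanes $\{x_1 = a^+\}$ and $\{x_1 = b^-\}$, remove the slab in between, and exploit the capillarity isoperimetric inequality~\eqref{isoperimetric:inequality} to bound the resulting perimeter gain. Write $S := [a^+, b^-] \times \R^{n-2} \times (0, \infty)$, $E_0 := E \cap S$, and $\mu(t) := \hh^{n-1}(E^{(1)} \cap \{x_1 = t\})$. Without loss of generality $E$ has finite perimeter (otherwise~\eqref{3.1:np} is trivial), so by the coarea formula $V(t) := |E \cap \{x_1 \le t\}|$ is absolutely continuous with $V'(t) = \mu(t)$ for a.e.\ $t$. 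The bound $m = |E_0| \le |E \cap \{a \le x_1 \le b\}| < \bar m$ is automatic, and since $E^- \subset E$ and $g, G > 0$, we have $\rr(E^-) \le \rr(E)$ and $\g(E^-) \le \g(E)$; the statement thus reduces to an estimate on $P_\lambda(E^-) - P_\lambda(E)$.

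For a.e.\ choice of $a^+ \in [a, a+L]$ and $b^- \in [b-L, b]$ (those for which $\partial^*E$ does not charge the slice hyperplane), a standard reduced-boundary decomposition yields
\[
P_\lambda(E^-) + P_\lambda(E_0) = P_\lambda(E) + 2\mu(a^+) + 2\mu(b^-).
\]
Combining this with $P_\lambda(E_0) \ge n|B^\lambda|^{1/n} m^{(n-1)/n}$ from~\eqref{isoperimetric:inequality},
\[
P_\lambda(E^-) - P_\lambda(E) \le 2(\mu(a^+) + \mu(b^-)) - n|B^\lambda|^{1/n} m^{(n-1)/n}.
\]
It therefore suffices to produce $a^+, b^-$ with $\mu(a^+) + \mu(b^-) \le \tfrac{1}{4} n|B^\lambda|^{1/n} m^{(n-1)/n}$. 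Set $C := \tfrac{1}{8} n|B^\lambda|^{1/n}$, $\psi(t) := |E \cap \{t \le x_1 \le a+L\}|$ on $[a, a+L]$, and $\tilde\psi(t) := |E \cap \{b-L \le x_1 \le t\}|$ on $[b-L, b]$. Since $b - a > 2L$ forces $[a^+, a+L] \cup [b-L, b^-] \subset [a^+, b^-]$, one has $\psi(a^+), \tilde\psi(b^-) \le m$, and it is enough to ensure $\mu(a^+) \le C\psi(a^+)^{(n-1)/n}$ and $\mu(b^-) \le C\tilde\psi(b^-)^{(n-1)/n}$.

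To produce $a^+$, I argue by contradiction. Suppose $\mu(t) > C \psi(t)^{(n-1)/n}$ for a.e.\ $t \in [a, a+L]$. On the set $\{\psi = 0\}$ one would have $\mu = -\psi' = 0$ a.e., but the hypothesis also gives $\mu > 0$ strictly there, hence $|\{\psi = 0\} \cap [a,a+L]| = 0$, i.e.\ $\psi > 0$ throughout $[a, a+L)$. On this set $\psi^{1/n}$ is absolutely continuous and satisfies $(\psi^{1/n})' < -C/n$ a.e.; integrating from $a$ to $a + L$ and using $\psi(a+L) = 0$ yields $\psi(a)^{1/n} > CL/n$, whence $\psi(a) > (CL/n)^n$. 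Taking $L > L_0(n, \lambda, \bar m) := n\bar m^{1/n}/C$ contradicts $\psi(a) \le \bar m$, producing the desired $a^+$. A symmetric argument (the vanishing set of $\tilde\psi$ is now an interval $[b-L, s^*]$ which must collapse to a point, after which the ODE is integrated on $(b-L, b]$) delivers $b^-$, and summing gives $\mu(a^+) + \mu(b^-) \le 2Cm^{(n-1)/n}$, establishing~\eqref{3.1:np}.

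The main technical care lies in the ODE contradiction: one must isolate the region $\{\psi > 0\}$, where $\psi^{(n-1)/n}$ can be inverted to yield an effective differential inequality, while the complementary region either is forced to be negligible by the strict inequality, or produces a trivial slice with $\mu = 0$ that satisfies the bound automatically; additionally, $a^+$ and $b^-$ must be chosen from the a.e.\ regular values so that the reduced-boundary decomposition into $P_\lambda(E^-)$ and $P_\lambda(E_0)$ is clean, which is standard via coarea.
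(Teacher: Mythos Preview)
Your proof is correct and follows essentially the same strategy as the paper: an ODE comparison on the slice-volume function to locate good cutting levels $a^+, b^-$, followed by the perimeter decomposition and the capillarity isoperimetric inequality~\eqref{isoperimetric:inequality} applied to the removed slab. The only cosmetic differences are that the paper first approximates $E$ by smooth sets (whereas you work directly with a.e.\ regular slicing values) and uses the midpoint $c=(a+b)/2$ rather than $a+L$, $b-L$ as the reference endpoint for the volume functions $\phi$, $\psi$.
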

\begin{proof} It is sufficient to prove the claim for bounded sets $E$ such that $\partial E \setminus \partial H$ is a smooth hypersurface with $\hh^{n - 1}(\{x \in \partial E \setminus H \st \nu^{E}(x) = \pm e_j\}) = 0$ for any $j = 1, \dots, n$.
Indeed, if $E$ is a generic set of finite perimeter satisfying the hypotheses of \cref{lemma3.2:np}, let $E_i \xrightarrow{L^1} E$ be the sequence of sets given by \cite[Lemma 2.4]{PascalePozzettaQuantitative}.
For $i$ sufficiently large, \begin{equation*} \left|\left\{x \in E_i \st a \le x_1 \le b\right\}\right| < \bar m \end{equation*} holds.
Then there exist $a_i^+ \in [a, a + L]$ and $b_i^-\in [b - L, b]$ such that, if we set $E_i^- = E_i \setminus \left([a^+_i, b^-_i] \times (0, \infty)\right)$ and $m_i = |E_i| - |E^-_i|$, we obtain \begin{equation*} \ff_\epsilon^\lambda(E^-_i) \le \ff_\epsilon^\lambda(E_i) - \frac{1}{2} n |B^\lambda|^{\frac{1}{n}} m_i^{\frac{n - 1}{n}}. \end{equation*}
Up to subsequence, $a_i^+$ and $b_i^-$ converge to certain $a^+ \in [a, a + L]$ and $b^-\in [b - L, b]$ respectively.
By \cite[Lemma 2.4]{PascalePozzettaQuantitative} $P_\lambda(E_i) \to P_\lambda(E)$.
By the lower semicontinuity of $P_\lambda$ (\cite[Lemma 3.7]{PascalePozzettaQuantitative}), the continuity of $\rr$ and $\g$ under strong $L^1$ convergence and the properties of $\{E_i\}$, if $E^- = E \setminus \left([a^+, b^-] \times (0, \infty)\right)$ then \begin{equation*} \begin{split} \ff_\epsilon^\lambda(E^-) & = P_\lambda(E^-) + \epsilon_1\rr(E^-) + \epsilon_2\g(E^-) \\ & \le \liminf_i (P_\lambda(E_i^-) + \epsilon_1 \rr(E^-_i) + \epsilon_2 \g(E^-_i)) \\ & \le \liminf_i \left(P_\lambda(E_i) + \epsilon_1 \rr(E_i) + \epsilon_2 \g(E_i) - \frac{1}{2} n |B^\lambda|^{\frac{1}{n}}m_i^{\frac{n - 1}{n}}\right) \\ & = \ff_\epsilon^\lambda(E) - \frac{1}{2} n |B^\lambda|^{\frac{1}{n}}m^{\frac{n - 1}{n}}. \end{split} \end{equation*}
So let us fix $\bar m$ and consider $a$ and $b$ as in the claim, with $L$ to be determined later. 
For $t \in \R$ let \begin{equation*} \sigma(t) := \hh^{n - 1} \left(E \cap \left\{x \in \R^n \st x_1 = t\right\}\right). \end{equation*}
If $c := \frac{a + b}{2}$, let \begin{equation*} \phi(t) := \int_t^c \sigma(s) \de s. \end{equation*}
We note that there exists $a^+ \in [a, a + L] \subset [a, c)$ such that, if \begin{equation*} m_1 := \left|\left\{x \in E \st a^+ < x_1 < c\right\}\right|, \end{equation*} then \begin{equation}\label{3.2:np} \sigma(a^+) \le \frac{1}{8} n |B^\lambda|^{\frac{1}{n}}m_1^{\frac{n- 1}{n}}. \end{equation}
Indeed, assume by contradiction that for every $t \in (a, a + L)$ it holds \begin{equation*} -\phi'(t) = \sigma(t) > \frac{1}{8} n |B^\lambda|^{\frac{1}{n}}\phi(t)^{\frac{n- 1}{n}}. \end{equation*}
Then $\phi|_{(a, a + L)}$ is a positive decreasing function satisfying \begin{equation*} \begin{cases} & \phi(a) \le \bar m, \\ & |\phi'(t)| > \frac{1}{8} n|B^\lambda|^{\frac{1}{n}} \phi(t)^{\frac{n - 1}{n}}. \end{cases} \end{equation*}
By standard ODE comparison there exists a constant $d > 0$ depending only on $n$, $\lambda$ and $\bar m$ such that, if $a+d < a+L$, then $\varphi(t)\to0$ as $t \to (a+d)^-$. Hence $L$ could be chosen so big that $a+d < a+L < c$, and then $\phi(t) = 0$ for any $t \in (a + d, c)$.
It follows that there exists $a^+$ such that~\eqref{3.2:np} holds.
Similarly, up to choosing a larger $L$, we have the existence of $b^- \in [b - L, b] \subset (c, b]$ such that, if \begin{equation*} m_2 := \left|\left\{x \in E \st c < x_1 < b^-\right\}\right|, \end{equation*} then \begin{equation*} \sigma(b^-) \le \frac{1}{8} n |B^\lambda|^{\frac{1}{n}}m_2^{\frac{n - 1}{n}}. \end{equation*}
Let $E^- := E \setminus \left(\left[a^+, b^-\right]\times(0, \infty)\right)$ and $F := E \setminus E^-$.
Then \begin{equation*} |F| = m = m_1 + m_2, \end{equation*} and, by isoperimetric inequality~\eqref{isoperimetric:inequality}, there holds
\begin{equation*} P_\lambda(F) \ge n |B^\lambda|^{\frac{1}{n}} m^{\frac{n - 1}{n}}. \end{equation*}
Hence
\begin{equation*} \begin{split} P_\lambda(E^-) & \le P_\lambda(E) - P_\lambda(F) + 2 (\sigma(a^+) + \sigma(b^-)) \\ & \le P_\lambda(E) - P_\lambda(F) + \frac{1}{4}n |B^\lambda|^{\frac{1}{n}} \left(m_1^{\frac{n - 1}{n}} + m_2^{\frac{n - 1}{n}}\right) \\ & \le P_\lambda(E) - P_\lambda(F) + \frac{1}{2} n |B^\lambda|^{\frac{1}{n}}(m_1 + m_2)^{\frac{n - 1}{n}} \\ & \le P_\lambda(E) - \frac{1}{2} n |B^\lambda|^{\frac{1}{n}}(m_1 + m_2)^{\frac{n - 1}{n}}. \end{split} \end{equation*} 
Since $E^- \subset E$, then $\rr(E^-) \le \rr(E)$ and $\g(E^-) \le \g(E)$, and we deduce~\eqref{3.1:np}. \end{proof}

The following variant of \cref{lemma3.2:np} concerns the case of the vertical direction when we modify a part lying on the hyperplane $\{x_n = 0\}$. 
\begin{lemma}\label{lemma3.2bis:np} Let $g$ be $\rr$-admissible and $G$ be $\g$-admissible. 
For every $\bar m \in \R$ there exists $L(n, \lambda, \bar m) > 0$ such that the following holds. 
Let $E \subset \R^n \setminus H$, and let $b$ be a number with $b > L$ and such that \begin{equation*} \left|\left\{x \in E \st 0 < x_n \le b\right\}\right| < \bar m. \end{equation*}
There exists then $b^- \in [b - L, b]$ such that, denoting $E^- = E \setminus \left(\R^{n - 1} \times [0, b^-]\right)$ and $m = |E| - |E^-| \le \bar m$, one has \begin{equation}\label{3.1bis:np} \ff_\epsilon^\lambda(E^-) \le \ff_\epsilon^\lambda(E) - \frac{1}{2} n |B^\lambda|^{\frac{1}{n}} m^{\frac{n - 1}{n}}. \end{equation} \end{lemma}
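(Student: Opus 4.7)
The plan is to mirror the proof of \cref{lemma3.2:np}, with the essential simplification that a single slicing level suffices since we only excise a slab adjacent to $\partial H$. I would first reduce to the case where $\partial E \setminus \partial H$ is smooth and satisfies $\hh^{n-1}(\{x \in \partial E \setminus H : \nu^E(x) = \pm e_n\}) = 0$, via the approximation argument of \cite[Lemma 2.3]{PascalePozzettaQuantitative} used at the beginning of the proof of \cref{lemma3.2:np}; this reduction is compatible with $\ff^\lambda_\epsilon$ thanks to the lower semicontinuity of $P_\lambda$ and the continuity of $\rr$ and $\g$ under strong $L^1$ convergence.

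Then I would introduce the functions
\[
\sigma(t) := \hh^{n-1}(E \cap \{x_n = t\}), \qquad \phi(t) := |E \cap \{0 < x_n < t\}| = \int_0^t \sigma(s)\de s,
\]
and look for $b^- \in [b-L, b]$ satisfying $\sigma(b^-) \le \tfrac{1}{4} n |B^\lambda|^{\frac{1}{n}} \phi(b^-)^{\frac{n-1}{n}}$. Arguing by contradiction, if no such $b^-$ exists, then $\phi'(t) > \tfrac{1}{4} n |B^\lambda|^{\frac{1}{n}} \phi(t)^{\frac{n-1}{n}}$ for almost every $t \in [b-L, b]$; integrating the derivative of $\phi^{\frac{1}{n}}$ gives $\phi(b)^{\frac{1}{n}} > \tfrac{L}{4} |B^\lambda|^{\frac{1}{n}}$, which contradicts $\phi(b) \le \bar m$ as soon as $L > 4 \bar m^{\frac{1}{n}} / |B^\lambda|^{\frac{1}{n}}$. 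I would therefore choose $L = L(n, \lambda, \bar m)$ slightly above this threshold.

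With such $b^-$ fixed, I would set $F := E \cap \{0 < x_n < b^-\}$, $E^- := E \setminus F$, and $m := |F| \le \bar m$. The key geometric identity is
\[
P_\lambda(E^-) + P_\lambda(F) = P_\lambda(E) + 2\sigma(b^-).
\]
This encodes the fact that, in the splitting $E = F \cup E^-$, the entire capillarity contact $\partial^* E \cap \partial H$ lies in $\partial^* F$, so that the weight $-\lambda$ appears only in $P_\lambda(F)$ and not in $P_\lambda(E^-)$, while the fresh horizontal slicing surface $E^{(1)} \cap \{x_n = b^-\}$, of measure $\sigma(b^-)$ for a.e. $b^-$, is counted once in each of $\partial^* F$ and $\partial^* E^-$. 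Combining this identity with the isoperimetric inequality \eqref{isoperimetric:inequality}, which gives $P_\lambda(F) \ge n |B^\lambda|^{\frac{1}{n}} m^{\frac{n-1}{n}}$, and with the bound on $\sigma(b^-)$, yields $P_\lambda(E^-) \le P_\lambda(E) - \tfrac{1}{2} n |B^\lambda|^{\frac{1}{n}} m^{\frac{n-1}{n}}$. The full inequality \eqref{3.1bis:np} then follows from the monotonicities $\rr(E^-) \le \rr(E)$ and $\g(E^-) \le \g(E)$, which are consequences of the positivity of $g$, $G$ and of $E^- \subset E$.

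The main subtlety, compared with the horizontal analogue in \cref{lemma3.2:np}, lies in the careful bookkeeping of the capillarity term under the vertical splitting: one must verify that $\hh^{n-1}(\partial^* F \cap \partial H) = \hh^{n-1}(\partial^* E \cap \partial H)$ (because $F$ coincides with $E$ in a neighborhood of $\partial H$) and that $\hh^{n-1}(\partial^* E^- \cap \partial H) = 0$ (because $E^-$ is separated from $\partial H$ by the slab $\{0 < x_n < b^-\}$). This is the only step where the geometry of the half-space genuinely enters the argument. Apart from this, the proof is actually cleaner than its horizontal counterpart, since only one slicing level rather than two needs to be chosen.
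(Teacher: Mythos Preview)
Your proposal is correct and follows essentially the same route as the paper's proof: reduction to the smooth transversal case, the ODE contradiction argument to locate $b^-$ with small $\sigma(b^-)$, the splitting identity $P_\lambda(E^-)+P_\lambda(F)=P_\lambda(E)+2\sigma(b^-)$, and the final use of the isoperimetric inequality plus monotonicity of $\rr$ and $\g$. You actually supply more detail than the paper does on the ODE step and on the capillarity bookkeeping (the observations $\hh^{n-1}(\partial^*F\cap\partial H)=\hh^{n-1}(\partial^*E\cap\partial H)$ and $\hh^{n-1}(\partial^*E^-\cap\partial H)=0$), which the paper leaves implicit.
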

\begin{proof} As in the proof of \cref{lemma3.2:np}, we may assume that $\partial E \setminus \partial H$ is a smooth hypersurface with $\hh^{n - 1}(\{x \in \partial E \setminus H \st \nu^{E_i}(x) = \pm e_j\}) = 0$ for any $j = 1, \dots n$.
Let us fix $\bar m$ and consider $b$ as in the claim, with $L$ to be determined later. 
For almost every $t \in \R$, let \begin{equation*} \sigma(t) := \hh^{n - 1} \left(E \cap \left\{x \in \R^n \st x_1 = t\right\}\right). \end{equation*}
Let \begin{equation*} \phi(t) := \int_0^t \sigma(s) \de s. \end{equation*}
As in the proof of \cref{lemma3.2:np}, we note that there exists $b^- \in [b - L, b] \subset (0, b]$ such that, if \begin{equation*} m := \left|\left\{x \in E \st 0 < x_n < b^-\right\}\right|, \end{equation*} then \begin{equation*}\label{3.2bis:np} \sigma(b^-) \le \frac{1}{4} n |B^\lambda|^{\frac{1}{n}}m^{\frac{n- 1}{n}}. \end{equation*}
If $E^- := E \setminus \left(\R^{n - 1} \times \left[0, b^-\right]\right)$ and $F := E \setminus E^-$,
then \begin{equation*} |F| = m \end{equation*} and, by isoperimetric inequality~\eqref{isoperimetric:inequality}, \begin{equation*} P_\lambda(F) \ge n |B^\lambda|^{\frac{1}{n}} m^{\frac{n - 1}{n}}. \end{equation*}
We establish that \begin{equation*} \begin{split} P_\lambda(E^-) & \le P_\lambda(E) - P_\lambda(F) + 2  \sigma(b^-) \\ & \le P_\lambda(E) - P_\lambda(F) + \frac{1}{2}n |B^\lambda|^{\frac{1}{n}} m^{\frac{n - 1}{n}} \\ & \le P_\lambda(E) - \frac{1}{2} n |B^\lambda|^{\frac{1}{n}}m^{\frac{n - 1}{n}}. \end{split} \end{equation*} 
Since $E^- \subset E$, then $\rr(E^-) \le \rr(E)$, $\g(E^-) \le \g(E)$ and we deduce~\eqref{3.1bis:np}. \end{proof}

We now prove a uniform boundedness result.
\begin{lemma}\label{lemma3.3:np} Let $g$ be $\rr$-admissible and $q$-growing and let $G$ be $\g$-admissible. Let $\epsilon_1,\epsilon_2 >0$. 
For every $m \in (0, \infty)$ there exist $R > 0$ and $\bar h \in \N$, depending on $n$, $\lambda$, $m$, $\epsilon_1$, $\epsilon_2$, $g$, $G$ and $q$, such that \begin{equation*} \inf\left\{\ff_\epsilon^\lambda(\Omega) \st \Omega \subset \R^n \setminus H,\, |\Omega| = m\right\} \ge \inf\left\{\tilde\ff_{\epsilon, \bar h}^{\lambda, R}(\Omega) \st \Omega \subset \R^n\setminus H,\, |\Omega| = m\right\}, \end{equation*} where \begin{equation*} \tilde\ff_{\epsilon, \bar h}^{\lambda, R}(\Omega) := \inf \left\{\sum_{i = 1}^{\bar h} \ff_\epsilon^\lambda(\Omega^i) \st \Omega = \cup_{i = 1}^{\bar h}\Omega^i, \, \Omega^i \cap \Omega^j = \emptyset, \, {\rm diam} \,\Omega^i \le R \quad \forall 1 \le i \neq j \le \bar h\right\}. \end{equation*} \end{lemma}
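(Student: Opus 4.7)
The strategy is a cutting-and-rescaling argument. Given any competitor $F$ with $|F|=m$, I may assume $\ff_\epsilon^\lambda(F)\le M_0 := \ff_\epsilon^\lambda(B^\lambda(m))$, since otherwise the trivial one-piece choice $\{B^\lambda(m)\}$ is already admissible for the right-hand side with energy $\le M_0\le \ff_\epsilon^\lambda(F)$. The plan is to iteratively apply \cref{lemma3.2:np} in the horizontal directions $e_1,\dots,e_{n-1}$ and for interior slabs $\{a\le x_n\le b\}$ with $a>0$ (whose analog in the $e_n$-direction follows by an identical proof, since for $a^+>0$ the cutting planes stay inside $\R^n\setminus H$), together with \cref{lemma3.2bis:np} for bottom slabs $\{0\le x_n\le b\}$, to cut $F$ into pieces of bounded diameter, and then to restore the missing mass by rescaling one piece via \cref{lemma3.1:npbis}.

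Fix $\bar m>0$ small (to be chosen below in terms of $n,\lambda,m,\epsilon_1,\epsilon_2,g,G,q$), and let $L=L(n,\lambda,\bar m)$ be provided by the cutting lemmas. At each step I search for a coordinate direction and an interval $[a,b]$ with $b-a>2L$ such that the corresponding slab of the current piece has mass $<\bar m$, and I perform the cut. By the cutting lemmas (combined with the nonnegativity of $\rr$, which ensures that separating a set into disjoint components does not increase the sum of individual energies), each cut makes $\sum_i \ff_\epsilon^\lambda(\Omega^i)$ drop by at least $\tfrac12 n |B^\lambda|^{1/n}(m')^{(n-1)/n}$, where $m'<\bar m$ is the mass removed. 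The iteration stops when no such interval exists in any direction, and a pigeonhole argument then shows that each surviving piece has extent $\le 2L(m/\bar m + 1)$ in every coordinate direction, hence diameter $\le R$ for an explicit $R$ depending on $n,\lambda,m,\bar m$.

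To bound the number of pieces, I discard all those of mass $<\bar m/2$. Since $P_\lambda(F_i)\ge c(n,\lambda)|F_i|^{(n-1)/n}$ by the isoperimetric inequality \eqref{isoperimetric:inequality} and $\sum_i P_\lambda(F_i)\le M_0$, the elementary estimate $|F_i|\le(\bar m/2)^{1/n}|F_i|^{(n-1)/n}$ on small pieces shows that the total discarded mass is $\le c\bar m^{1/n}M_0$. Similarly, since $m'\le \bar m^{1/n}(m')^{(n-1)/n}$ for each cut and $\sum_{\text{cuts}}(m')^{(n-1)/n}$ is controlled by the cumulative energy drop, the mass removed during the cutting phase is also $\le c\bar m^{1/n}M_0$. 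The retained pieces, of mass $\ge \bar m/2$, number at most $\bar h:=\lceil 2m/\bar m\rceil$, and the total mass deficit $m^*$ is $\le C\bar m^{1/n}M_0$.

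Finally, I rescale one retained piece $\Omega^1$ (say, of largest mass) by $\alpha:=((|\Omega^1|+m^*)/|\Omega^1|)^{1/n}\ge 1$, keeping it inside $\{x_n>0\}$, so that the total volume is restored to $m$. By \cref{lemma3.1:npbis}, $\ff_\epsilon^\lambda(\alpha\Omega^1)\le \alpha^{2n+q}\ff_\epsilon^\lambda(\Omega^1)$, and a Taylor expansion yields $\alpha^{2n+q}-1\le Cm^*/m$ provided $m^*$ is small relative to $m$. Hence the rescaling costs at most $Cm^*M_0/m$ in additional energy, while the cumulative savings from the cutting phase are bounded below by $c(m^*_{\text{cut}})^{(n-1)/n}$ via subadditivity of $t\mapsto t^{(n-1)/n}$. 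The hard part is this quantitative balance: the rescaling cost scales linearly in $m^*$ while the savings scale only sublinearly as $(m^*)^{(n-1)/n}$, so $\bar m$ must be taken small enough that $m^* \ll (m/M_0)^n$, which is achievable by an explicit smallness condition on $\bar m$ depending only on $n,\lambda,m,\epsilon_1,\epsilon_2,g,G,q$, ensuring the final decomposition satisfies $\sum_i\ff_\epsilon^\lambda(\Omega^i_{\text{final}})\le \ff_\epsilon^\lambda(F)$.
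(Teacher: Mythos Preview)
Your overall strategy—cut low-mass slabs via \cref{lemma3.2:np} and \cref{lemma3.2bis:np}, control the lost mass through the cumulative energy drop, discard tiny pieces, then rescale—is in the right spirit, but two steps do not work as written.

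\textbf{Termination.} The unstructured iteration (``at each step I search for a coordinate direction and an interval\dots'') is not shown to terminate. Each cut removes a slab of width $b^- - a^+ \ge b-a-2L>0$, but this can be arbitrarily small when $b-a$ is chosen just above $2L$; nothing prevents an infinite sequence of cuts with vanishing removed widths, and no monotone quantity you track forces finiteness. The paper avoids this by a structured pass: in a fixed direction it partitions the axis into $M$ intervals each carrying mass exactly $\bar m=m/M$, applies \cref{lemma3.2:np} once per long interval (at most $M$ cuts), rescales the whole set back to mass $m$, and only then moves to the next direction. This yields a finite procedure with at most $(M+1)^n$ pieces. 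Incidentally, the paper does \emph{not} bound the removed mass via your energy-drop argument: it instead picks $E$ within $\tfrac{1}{3}n|B^\lambda|^{1/n}(m/M^2)^{(n-1)/n}$ of the infimum and observes that if some $m_i>m/M^2$, then removing that single strip and rescaling would beat the infimum. Your bound $\sum m'\le c\,\bar m^{1/n}M_0$ is a valid alternative, but it must be paired with a cutting scheme that actually halts.

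\textbf{Rescaling one piece.} With $\alpha=((|\Omega^1|+m^*)/|\Omega^1|)^{1/n}$ the Taylor expansion gives $\alpha^{2n+q}-1\le C\,m^*/|\Omega^1|$, not $C\,m^*/m$. Nothing guarantees that the largest retained piece has mass comparable to $m$; after your cutting and discarding, every retained piece could have mass of order $\bar m$, and then the rescaling cost is $\sim m^*M_0/\bar m\sim \bar m^{1/n-1}M_0^2$, which blows up as $\bar m\to0$. The fix is to rescale \emph{all} retained pieces by the common factor $\alpha=(m/(m-m^*))^{1/n}$ (this is exactly what the paper does with $\tilde E$): then $\alpha^{2n+q}-1\le C\,m^*/m$ genuinely holds, and since $\sum_i\ff_\epsilon^\lambda(\Omega^i)\le M_0$, the cost is $\le C\,m^*M_0/m$. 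With these two repairs your balance $(m^*)^{1/n}\ll m/M_0$ does close.
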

\begin{proof} Let $M(n, \lambda, m, \epsilon_1, \epsilon_2, g, G, q) \in \N$ be a natural number to be determined later and let us denote $\bar m = m/M$.
Let $E \subset R^n \setminus H$ be a bounded set with $|E| = m$ and \begin{equation}\label{3.3:np} \ff_\epsilon^\lambda(E) \le \inf\left\{\ff_\epsilon^\lambda(\Omega) \st \Omega \in \mathcal{A}\right\} + \frac{n |B^\lambda|^{\frac{1}{n}}}{3}\left(\frac{m}{M^2}\right)^{\frac{n - 1}{n}}. \end{equation} 
This is possible since the infimum is reached by a sequence of bounded sets.
Let $t_0 < t_1 < \dots < t_{M - 1} < t_M$ be real numbers such that \begin{equation*} \left|E \cap \left((t_i, t_{i + 1}) \times \R^{n - 2} \times (0, \infty)\right)\right| = \bar m, \end{equation*} for every $0 \le i \le M - 1$ and let $L(n, \lambda, \bar m)$ be given by \cref{lemma3.2:np}.
For every $0 \le i \le M - 1$ let us define the interval $I_i$ in the following way.
If $t_{i + 1} - t_i \le 2L$ we set $I_i = \emptyset$, otherwise we apply \cref{lemma3.2:np} with $a = t_i$ and $b = t_{i + 1}$ and we set $I_i = [a^+, b^-]$.
If $m_i = |E \cap (I_i \times \R^{n - 2} \times (0, \infty))|$, then \begin{equation}\label{3.4:np} m_i \le \frac{m}{M^2}. \end{equation}
Indeed, if $I_i = \emptyset$, then~\eqref{3.4:np} is clearly true.
If $I_i \neq \emptyset$, we set \begin{equation*} E' = \alpha \left(E \setminus \left(I_i \times \R^{n - 2} \times (0, \infty)\right)\right), \end{equation*} with $\alpha = \left(\frac{m}{m - m_i}\right)^{\frac{1}{n}}$.
Note that $\frac{m_i}{m} \le \frac{1}{M}$ by construction.
By \cref{lemma3.1:npbis} and~\eqref{3.1:np} we have \begin{equation*} \begin{split} \ff_\epsilon^\lambda(E') & \le \left(\frac{m}{m - m_i}\right)^{2 + \frac{q}{n}}\ff_\epsilon^\lambda\left(E \setminus (I_i \times \R^{n - 2} \times (0, \infty))\right) \\ & \le \left(\frac{1}{1 - \frac{m_i}{m}}\right)^{2 + \frac{q}{n}} \left(\ff_\epsilon^\lambda(E) - \frac{1}{2}n |B^\lambda|^{\frac{1}{n}}m_i^{\frac{n - 1}{n}}\right). \end{split} \end{equation*}
Moreover, if $M$ is large enough,
\begin{equation}\label{3.4bis:np} \begin{split} \ff_\epsilon^\lambda(E') & \le \left(1 + \left(3 + \frac{q}{n}\right)\frac{m_i}{m}\right) \left(\ff_\epsilon^\lambda(E) - \frac{1}{2}n |B^\lambda|^{\frac{1}{n}}m_i^{\frac{n - 1}{n}}\right) \\ & \le\ff_\epsilon^\lambda(E) - \frac{1}{3} n |B^\lambda|^{\frac{1}{n}}m_i^{\frac{n - 1}{n}}. \end{split} \end{equation}
Estimates~\eqref{3.3:np} and~\eqref{3.4bis:np} imply that \begin{equation*} \ff_\epsilon^\lambda(E') \le \inf\left\{\ff_\epsilon^\lambda(\Omega) \st \Omega \in \mathcal{A}\right\} + \frac{n |B^\lambda|^{\frac{1}{n}}}{3}\left(\frac{m}{M^2}\right)^{\frac{n - 1}{n}} - \frac{n |B^\lambda|^{\frac{1}{n}}}{3}m_i^{\frac{n - 1}{n}}, \end{equation*} and, since $|E'| = m$,~\eqref{3.4:np} holds.
\par Let \begin{equation*} \tilde E = E \setminus \left(\bigcup_{i = 0}^{M - 1} I_i \times \R^{n - 2} \times (0, \infty)\right) \end{equation*} and $\mu = \sum_{i = 0}^{M - 1}m_i$, so that $\left|\tilde E\right| = m - \mu$.
By \cref{lemma3.2:np} and the subadditivity of power function with exponent less than $1$ we get \begin{equation*} \begin{split} \ff_\epsilon^\lambda(\tilde E) & \le \ff_\epsilon^\lambda(E) - \frac{1}{2}n |B^\lambda|^{\frac{1}{n}}\sum_{i = 0}^{M - 1} m_i^{\frac{n - 1}{n}} \le \ff_\epsilon^\lambda(E) - \frac{1}{2} n |B^\lambda|^{\frac{1}{n}} \mu^{\frac{n - 1}{n}}. \end{split} \end{equation*}
The set $F := \left(\frac{m}{m - \mu}\right)^{\frac{1}{n}}\tilde E$ has volume $m$.
We can use \cref{lemma3.1:npbis} to obtain \begin{equation*} \begin{split} \ff_\epsilon^\lambda(F) & \le \left(\frac{m}{m - \mu}\right)^{2 + \frac{q}{n}} \ff_\epsilon(\tilde E) \le \left(\frac{m}{m - \mu}\right)^{2 + \frac{q}{n}} \left(\ff_\epsilon^\lambda(E) - \frac{1}{2}n |B^\lambda|^{\frac{1}{n}}\mu^{\frac{n - 1}{n}}\right). \end{split} \end{equation*}
If $\mu$ is small enough, which happens as soon as $M$ is large enough thanks to \eqref{3.4:np}, we deduce \begin{equation*} \begin{split} \ff^\lambda_\epsilon(F) & \le \left(\frac{m}{m - \mu}\right)^{2 + \frac{q}{n}} \left(\ff_\epsilon^\lambda(E) - \frac{1}{2}n |B^\lambda|^{\frac{1}{n}}\mu^{\frac{n - 1}{n}}\right) 
\\ & \le 
\ff^\lambda_\epsilon(E) + C \mu \left(\inf\left\{\ff_\epsilon^\lambda(\Omega) \st \Omega \in \mathcal{A}\right\} + c\left(\frac{m}{M^2}\right)^{\frac{n-1}{n}}\right) + (1+C\mu) \left( - \frac{1}{2}n |B^\lambda|^{\frac{1}{n}}\mu^{\frac{n - 1}{n}}\right) \\ & \le \ff^\lambda_\epsilon(E). \end{split} \end{equation*}
Note that $\tilde E$ is the union of at most $M + 1$ sets, each contained in a slab having width at most equal to $2L$ by \cref{lemma3.2:np}.
In particular, $F$ is the union of at most $M + 1$ parts and each of them has horizontal width at most equal $3L$.
\par If we repeat the arguments in the remaining directions, with care to apply also \cref{lemma3.2bis:np} in the $n$-th direction, we get the boundedness of the pieces in all the $n$ directions.
Then there exist $R \in (0, \infty)$, $\bar h \in \N$ and $G \subset \R^n$ such that $|G| = m$, $G = \cup_{i = 1}^{\bar h}G_i$, $G_i \cap G_j = \emptyset$, diam $G_i \le R$ and $\ff_\epsilon^\lambda(G) \le \ff^\lambda_\epsilon(E)$.
Finally \begin{equation*} \ff_\epsilon^\lambda(G) \ge \sum_{i = 1}^{\bar h}\ff_\epsilon^\lambda(G_i) \ge \tilde\ff_{\epsilon, \bar h}^{\lambda, R}(G). \end{equation*} \end{proof}

Now we are ready to prove \cref{generalized:existence}.

\begin{proof}[Proof of \cref{generalized:existence}] We begin by proving the existence of $h' \in \N$ and of a sequence $\{G_i\}_{i \in \N} \subset \mathcal{A}$ such that \begin{equation}\label{3.5:np} \inf\left\{\tilde\ff_\epsilon^\lambda(\Omega) \st \Omega \in \mathcal{A}\right\} = \lim_{i \to \infty}\tilde\ff^\lambda_{\epsilon, h'}(G_i). \end{equation} 
Let $h'(n, \lambda, \epsilon_1, \epsilon_2, g, G, q)$ be an integer to be determined later and consider a sequence $\{E_i\}_{i \in \N} \subset \mathcal{A}$ such that \begin{equation}\label{3.6:np} K := \inf\left\{\tilde\ff_\epsilon^\lambda(\Omega) \st \Omega \in \mathcal{A}\right\} = \lim_{i \to \infty}\tilde\ff_\epsilon^\lambda(E_i). \end{equation}
For every $i \in \N$ let $h(i) \in \N$ such that there exists a subdivision $E_i = E_i^1 \cup E_i^2 \cup \dots \cup E_i^{h(i)}$ with \begin{equation}\label{3.7:np} \tilde\ff_\epsilon^\lambda(E_i) > \left(1 - \frac{1}{i + 1}\right) \sum_{j = 1}^{h(i)}\ff_\epsilon^\lambda(E_i^j). \end{equation}
Without loss of generality, we can assume $h(i) \to \infty$, so that $h(i) > h'$ for $i$ large enough.
Let us fix a generic $i \in \N$.
For simplicity of notation, let us denote $h = h(i)$ and $m_j = |E_i^j|$ for every $1 \le j \le h$.
Let us also assume, without loss of generality, that $m_j$ is decreasing with respect to $j$.
By~\eqref{3.7:np} we get
\begin{equation*} \begin{split} \tilde\ff_\epsilon^\lambda(E_i) & \ge \frac{1}{2} \sum_{j = 1}^hP_\lambda(E_i^j) \ge \frac{1}{2}\sum_{j = 1}^h n |B^\lambda|^{\frac{1}{n}}m_j^{\frac{n - 1}{n}} \ge \frac{1}{2\sqrt[n]{m_1}} \sum_{j = 1}^h n |B^\lambda|^{\frac{1}{n}} m_j = \frac{1}{2\sqrt[n]{m_1}} n |B^\lambda|^{\frac{n + 1}{n}}. \end{split} \end{equation*}
If $i$ is large enough, by~\eqref{3.6:np} we deduce that \begin{equation*}\label{costante:crescita1} m_1 \ge \left(\frac{n |B^\lambda|^{\frac{n + 1}{n}}}{4 K}\right)^n. \end{equation*}
For every such $i$, we define \begin{equation*} G_i = \alpha \bigcup_{j = 1}^{h'}  E_i^j, \end{equation*} with \begin{equation*}\label{costante:crescita2} \alpha = \left(\frac{|B^\lambda|}{|B^\lambda| - \sum_{j > h'}m_j}\right)^{\frac{1}{n}} \le 1 + c_1\sum_{j = h' + 1}^h m_j, \end{equation*} where $c_1$ is a constant depending on $n$, $\lambda$ and $K$ (that is on $n$, $\lambda$, $\epsilon_1$, $\epsilon_2$, $g$ and $G$).
Note that also $G_i$ belongs to $\mathcal{A}$.
By \cref{lemma3.1:npbis} we deduce \begin{equation}\label{3.9:np} \begin{split} \tilde\ff_{\epsilon, h'}^\lambda(G_i) & \le \sum_{j = 1}^{h'}\ff_\epsilon^\lambda(\alpha E_i^j) \le \alpha^{2n + q} \sum_{j = 1}^{h'}\ff_\epsilon(E_i^j) \\ & \le \left(1 + c_2(n, \lambda, \epsilon_1, \epsilon_2, g, G, q)\sum_{j = h' + 1}^h m_j\right)\sum_{j = 1}^{h'}\ff_\epsilon^\lambda(E_i^j). \end{split} \end{equation}
By~\eqref{3.7:np} we get \begin{equation*} \begin{split} \tilde\ff_\epsilon^\lambda(E_i) & > \left(1 - \frac{1}{i + 1}\right) \sum_{j = 1}^h \ff_\epsilon^\lambda(E_i^j) \\ & \ge \left(1 - \frac{1}{i + 1}\right) \sum_{j = 1}^{h'}\ff_\epsilon^\lambda(E_i^j) + \frac{1}{2} \sum_{j = h' + 1}^hP_\lambda(E_i^j) \\ & \ge \left(1 - \frac{1}{i + 1}\right) \sum_{j = 1}^{h'}\ff_\epsilon^\lambda(E_i^j) + \frac{1}{2}n |B^\lambda|^{\frac{1}{n}}\sum_{j = h' + 1}^hm_j^{\frac{n - 1}{n}}. \end{split} \end{equation*}
If $i$ is large enough, by~\eqref{3.6:np} and~\eqref{3.7:np} \begin{equation*} \sum_{j = 1}^{h'}\ff_\epsilon^\lambda(E_i^j) \le 2K. \end{equation*}
By~\eqref{3.9:np} \begin{equation}\label{3.10:np} \tilde\ff_{\epsilon, h'}^\lambda(G_i) - \tilde\ff_{\epsilon, h}^\lambda(E_i) \le 2K \left(c_2(n, \lambda, \epsilon_1, \epsilon_2, g, G, q)\sum_{j = h' + 1}^h m_j + \frac{1}{i + 1}\right) - \frac{1}{2}n |B^\lambda|^{\frac{1}{n}}\sum_{j = h' + 1}^h m_j^{\frac{n - 1}{n}}. \end{equation}
Now we can define $h' \in \N$ so that \begin{equation*} h' \ge \left(\frac{4K c_2(n, \lambda, \epsilon_1, \epsilon_2, g, G, q)}{n}\right)^n. \end{equation*}
Since $m_j \le \frac{|B^\lambda|}{h'}$ for $j > h'$, if $i$ is large enough we get from~\eqref{3.10:np} \begin{equation*} \tilde\ff_{\epsilon, h'}^\lambda(G_i) \le \tilde\ff_{\epsilon, h}^\lambda(E_i) + \frac{2K}{i + 1}. \end{equation*}
By~\eqref{3.6:np} we finally deduce that $\{G_i\}_{i \in \N} \subset \mathcal{A}$ satisfies~\eqref{3.5:np}.

\medskip

Let us now show that for every $m > 0$ there exist $\bar h \in \N$, a bounded set $E$ with $|E| = m$ and a subdivision $E = \bigcup_{k = 1}^{\bar h} E^k$ such that \begin{equation} \label{3.11:np} \tilde\ff_\epsilon^\lambda(E) \le \sum_{k = 1}^{\bar h}\ff_\epsilon^\lambda(E^k) \le \inf\left\{\ff_\epsilon^\lambda(\Omega) \st \Omega \in \R^n \setminus H, |\Omega| = m\right\}. \end{equation} 
Let $R$ and $\bar h$ be as in \cref{lemma3.3:np}. By \cref{lemma3.3:np} there is a sequence of sets $\{\Omega_i\}_{i \in \N}$ of volume $m$ such that \begin{equation}\label{3.12:np} \inf\left\{\ff_\epsilon^\lambda(\Omega) \st \Omega \subset \R^n \setminus H, |\Omega| = m\right\} \ge \lim_{i \to + \infty} \tilde\ff_{\epsilon, \bar h}^{\lambda, R}(\Omega_i), \end{equation} where $\tilde\ff_{\epsilon, \bar h}^{\lambda, R}$ is defined in \cref{lemma3.3:np}.
For every $i \in \N$ there exists a partition $\Omega_i = \Omega_i^1 \cup \Omega_i^2 \cup \dots \cup \Omega_i^{\bar h}$ with diam$(\Omega_i^j) \le R$ and \begin{equation}\label{3.13:np} \sum_{j = 1}^{\bar h}\ff_\epsilon^\lambda(\Omega_i^j) \le \tilde\ff_{\epsilon, \bar h}^{\lambda, R}(\Omega_i) + \frac{1}{i}. \end{equation}
Up to a subsequence there exist $m_j \in (0, \infty)$, with $1 \le j \le \bar h$, such that \begin{equation*} m_j = \lim_{i \to \infty} |\Omega_i^j| \qquad \forall 1 \le j \le \bar h, \qquad
\qquad
m = \sum_{j = 1}^{\bar h} m_j. \end{equation*}
Let us fix $1 \le k \le \bar h$ and consider the sets $\{\Omega_i^k\}_{i \in \N}$.
Since their diameters are uniformly bounded by $R$, up to translations we can assume that all the $\Omega_i^k$ are pairwise disjoint and contained in a fixed ball with radius $R$.
Therefore the characteristic functions $f_i = \chi_{\Omega_i^k}$ have uniformly bounded supports and are bounded in $BV$.
Up to a subsequence, we can assume that $f_i$ is weakly$^*$ convergent in $BV$, and in particular strongly convergent in $L^1$, to a certain function $f$.
Then $f$ is the characteristic function of a bounded set $E^k$ with volume $m_k$.
By the lower-semicontinuity of the perimeter under weak$^*$ $BV$-convergence and the continuity of $\rr$ and $\g$ under strong $L^1$ convergence, we obtain that \begin{equation}\label{3.14:np} \ff_\epsilon^\lambda(E^k) \le \liminf_{i \to \infty}\ff_\epsilon^\lambda(\Omega_i^k). \end{equation}
Up to a translation we can assume that the sets $E^k$ are pairwise disjoint.
In particular the set $E = \cup_{k = 1}^{\bar h} E^k$ is bounded with $|E| = m$.
By~\eqref{3.12:np},~\eqref{3.13:np} and~\eqref{3.14:np} we get \begin{equation*} \begin{split} \tilde\ff_\epsilon^\lambda(E) & \le \sum_{k = 1}^{\bar h}\ff_\epsilon^\lambda(E^k) \le \sum_{k = 1}^{\bar h} \liminf_{i \to \infty}\ff_\epsilon^\lambda(\Omega_i^k) \le \liminf_{i \to \infty}\sum_{k = 1}^{\bar h} \ff_\epsilon^\lambda(\Omega_i^k) \\ & \le \liminf_{i \to \infty}\tilde\ff_{\epsilon, \bar h}^{\lambda, R}(\Omega_i) \le \inf\left\{\ff_\epsilon^\lambda(\Omega) \st \Omega \subset \R^n \setminus H, \; |\Omega| = m\right\}, \end{split} \end{equation*} so~\eqref{3.11:np} is proved.

\medskip

We can now conclude the proof of the theorem. Let $\{G_i\}_{i \in \N}$ as in~\eqref{3.5:np} and let us consider a subdivision $G_i = G_i^1 \cup G_i^2 \cup \dots \cup G_i^{h'}$ such that \begin{equation}\label{3.15:np} \inf\left\{\tilde\ff_\epsilon^\lambda(\Omega) \st \Omega \in \mathcal{A}\right\} = \lim_{i \to \infty} \sum_{j = 1}^{h'}\ff_\epsilon^\lambda(G_i^j). \end{equation}
Up to a subsequence there exist $\mu_j > 0$, $1 \le j \le h'$, such that \begin{equation*} \mu_i = \lim_{i \to \infty} |G_i^j| \qquad \forall 1 \le j \le h', \; |B^\lambda| = \sum_{j = 1}^{h'}\mu_j. \end{equation*}
Let \begin{equation*} K_j := \inf\left\{\ff_\epsilon^\lambda(\Omega) \st \Omega \subset \R^n \setminus H, |\Omega| = \mu_j\right\}. \end{equation*}
By~\eqref{3.15:np} \begin{equation}\label{3.16:np} \inf\left\{\tilde\ff_\epsilon^\lambda(\Omega) \st \Omega \in \mathcal{A}\right\} = \sum_{j = 1}^{h'}K_j. \end{equation}
By~\eqref{3.11:np} for every $1 \le j \le h'$ there exist $\bar h(j) \in \N$, a bounded set $E_j \subset \R^n \setminus H$ with $|E_j| = \mu_j$ and a subdivision in pairwise disjoint sets $E_j = \bigcup_{k = 1}^{\bar h(j)} E_{j, k}$ such that \begin{equation}\label{3.17:np} \tilde\ff_\epsilon^\lambda(E_j) \le \sum_{k = 1}^{\bar h(j)} \ff_\epsilon^\lambda(E_{j, k}) \le K_j. \end{equation}
Since the sets $E_j$ are bounded, up to translations we can assume that the set $E = \cup_{j = 1}^{h'} E_j$ has volume $|B^\lambda|$.
Therefore $E$ is the disjoint union of all the sets $E_{j, k}$ with $1 \le j \le h'$ and $1 \le k \le \bar h(j)$.
Let us denote these sets as $E^l$ with $1 \le l \le h$ and $h = \sum_{j = 1}^{h'}\bar h(j)$.
By~\eqref{3.16:np} and~\eqref{3.17:np} we deduce that \begin{equation*} \tilde\ff_\epsilon^\lambda(E) \le \sum_{l= 1}^h \ff_\epsilon^\lambda(E^l) \le \sum_{j = 1}^{h'} K_j = \inf \left\{\tilde\ff_\epsilon^\lambda(\Omega) \st \Omega \in \mathcal{A}\right\}, \end{equation*} that is $E$ is a minimizer of $\tilde\ff_\epsilon^\lambda$ and the subdivision $E = \bigcup_{l=1}^h E^l$ is optimal.
\par The proof of~\eqref{1.3:np} for a given $1 \le \bar j \le h$ easily follows as in the proof of \cite[Proposition 1.2]{NovagaPratelli}. \end{proof}

Now we are ready to prove \cref{theorem:3}.
\begin{proof}[Proof of \cref{theorem:3}] With the notation of \cref{theorem:3}, if $\Omega \subset \R^n \setminus H$ is a measurable set with $|\Omega| = m$, let \begin{equation*} \ff^\lambda(\Omega) = P_\lambda(\Omega) + \int_\Omega \int_\Omega g(y - x) \de y \de x + \int_\Omega G(x_n) \de x \end{equation*} and  \begin{equation*}\tilde\ff^\lambda(\Omega) := \inf_{h \in \N}\tilde\ff^\lambda_{h}(\Omega), \end{equation*} where \begin{equation*} \tilde\ff^\lambda_{h}(\Omega) := \inf \left\{\sum_{i = 1}^h \ff^\lambda(\Omega^i) \st \Omega = \bigcup_{i = 1}^h \Omega^i, \Omega^i \cap \Omega^j = \emptyset \quad {\rm for} \; 1 \le i \neq j \le h\right\}. \end{equation*} 
If $F \subset \R^n \setminus H$ has measure $|B^\lambda|$ and $\bar \epsilon = \frac{m^{\frac{1}{n}}}{|B^\lambda|^{\frac{1}{n}}}$, the set $\tilde F := \bar \epsilon F$ has volume $m$ and by \cref{remark:scaling} there exist $\tilde g$ $\rr$-admissible and $\tilde G$ $\g$-admissible such that \begin{equation*} \ff^\lambda (\tilde F) = \bar \epsilon^{n - 1} \left(P_\lambda(F) + \bar \epsilon^{n + 1}\int_F\int_F \tilde g(y - x) \de y \de x + \bar\epsilon\int_F \tilde G(x_n) \de x\right) =: \bar \epsilon^{n - 1} \ff^\lambda_{\bar\epsilon}(F). \end{equation*}
Note that $\tilde\ff^\lambda(\tilde F) = \bar \epsilon^{n - 1} \tilde\ff_{\bar\epsilon}^\lambda(F)$ and that, if $g$ is $q$-growing, also $\tilde g$ is $q$-growing (see \cref{remark:scaling}).
By \cref{generalized:existence} there exists $E \subset \R^n \setminus H$ with $|E| = |B^\lambda|$ which minimizes $\tilde \ff^\lambda_{\bar\epsilon}$.
Then the set $\tilde E := \bar\epsilon E$ minimizes $\tilde\ff^\lambda$ among sets with volume $m$ and \cref{theorem:3} easily follows. \end{proof}

\section*{Acknowledgements} 
The author is member of INdAM - GNAMPA. 
The author is grateful to Marco Pozzetta for many suggestions and for stimulating discussions. 
The author also thanks the referee for all valuable comments helping to concretely improve exposition of the results.

\printbibliography[title={References}]
\end{document}